\def\eps{\varepsilon}
\def\qed{\hfill\rule{.2cm}{.2cm}}
\def\P{{\mathbb P}}
\def\Z{{\mathbb Z}}
\def\R{{\mathbb R}}
\def\Q{{\mathbb Q}}
\def\1{{\mathbf 1}}
\def\T{{\mathbf T}}
\def\o{\omega}  
\def\l{\lambda} \def\hl{\hat \l}
\def\div{ \mathrm{div} }
\def\bB{\bar B}
\newcommand{\FF}          {\mathcal{F}}
\newcommand{\LL}         {\mathcal{L}}
\newcommand{\HH}         {\mathcal{H}}
\newcommand{\WW}         {\mathcal{W}}
\def\O{\Omega}
\newtheorem{theo}{Theorem}[section]
\newtheorem{prop}[theo]{Proposition}
\newtheorem{lm}[theo]{Lemma}
\newtheorem{cor}[theo]{Corollary}
\newtheorem{rmk}[theo]{Remark}
\newtheorem{df}[theo]{Definition}
\def\beq{\begin{equation}}
\def\eeq{\end{equation}}
\newcommand{\bei}{\begin{itemize}}
\newcommand{\eei}{\end{itemize}}
\newcommand{\ben}{\begin{enumerate}}
\newcommand{\een}{\end{enumerate}}
\newcommand{\beqn}{\begin{eqnarray}}
\newcommand{\beqnn}{\begin{eqnarray*}}
\newcommand{\eeqn}{\end{eqnarray}}
\newcommand{\eeqnn}{\end{eqnarray*}}
\newcommand{\brm}{\begin{rmk}}
\newcommand{\erm}{\end{rmk}}
\newcommand {\tr}[1] {{#1}\cdot }
\definecolor{darkred}{rgb}{0.9,0.1,0.1}
\title{Steady states, fluctuation-dissipation theorems and homogenization for reversible diffusions in a random environment }
\author{
P.~Mathieu \setcounter{footnote}{6}\footnote{ Aix-Marseille Universit\'e, CNRS, Centrale Marseille, I2M UMR 7373, 13453 Marseille, FRANCE.
{\sl pierre.mathieu@univ-amu.fr}}
\and
A.~Piatnitski \setcounter{footnote}{3}\footnote{The Arctic University of Norway, campus Narvik, P.O.Box 385, 8505, NORWAY, and the Institute for Information Transmission Problems  of RAS, Moscow, 127051, Bolshoy Karetny per., 19,   RUSSIA.
{\sl apiatni@iitp.ru}}
}
\begin{document}
\maketitle

\begin{abstract}
Prolongating our previous paper on the Einstein relation, we study the motion of a particle diffusing in a random reversible environment  when subject to a small external forcing.
In order to describe the long time behavior of the particle, we introduce the notions of steady state and weak steady state. We establish the continuity of weak steady states for an ergodic and uniformly elliptic environment.
When the environment has finite range of dependence, we prove the existence of the steady state and weak steady state and compute its derivative at a vanishing force. Thus we obtain a complete 'Fluctuation-Dissipation Theorem' in this context as well as the continuity of the effective variance.
\end{abstract}

\section{Introduction}\label{s_iintro}

Prolongating the work started in \cite{kn:GMP}, we study the motion of a particle diffusing in a random reversible environment  when subject to a small external forcing. The external force we consider is a constant in time vector field in some direction $e_1$ and strength $\lambda$. We think of $\lambda$  as being small.

Long time properties of the motion of our particle depend on the process of the environment seen from the particle: in the absence of the external force, the process of the environment seen from the particle is at equilibrium and
the motion of the diffusing particle is diffusive (obeys the central limit theorem). When a constant external force is added, the process of the environment seen from the particle is off equilibrium and the motion of the particle becomes ballistic.
In order to get a law of large numbers, one has to study appropriate invariant measures for the environment seen from the particle; we call such measures 'steady states'.
Although the existence of a steady state was proved for environments with a finite range of correlation in \cite{kn:KoKr06}, nothing was known until recently about the way it depends on $\lambda$. A first partial answer was given in \cite{kn:GMP} where we computed the derivative of the effective velocity and thus obtained the so-called Einstein relation. In the present paper, we shall investigate regularity properties of the steady state itself.

This question is of general interest in physics where studying the response of a system to a small perturbation is often a fruitful experimental procedure. A first example of such a situation is the work of Perrin on the Brownian motion of minute particles suspended in liquids, see \cite{kn:Perrin},  that confirmed the theoretical predictions of Einstein about Brownian motion and the existence of atoms, see \cite{kn:Einstein}. Another well-known example is the Green-Kubo relation expressing transport coefficients in terms of correlations, see \cite{kn:Ku85}. Such results are usually referred to as Fluctuation-Dissipation theorems or Linear Response theory in the physics literature. We refer to \cite{kn:HM} and their references for applications to climate change among others.

Reversible diffusions in a random environment are also an example of models obeying homogenization (\cite{kn:DFGW}, \cite{kn:JKO}, \cite{kn:KV}, \cite{kn:Ko80}, \cite{kn:Ko85}, \cite{kn:Osada}, \cite{kn:PV} among others). Studying the effect of imposing a small drift in the equation is then a way to test the robustness of homogenization properties. Indeed our result on the continuity of the steady state rely on our ability to obtain bounds on the effect of the external forcing that are uniform in time, see in particular Lemma \ref{lm:estimateAlambda}.
Let us also mention that similar issues are currently addressed in the context of deterministic dynamical systems, see \cite{kn:Baladi} and references therein.

\medskip

Let $\Omega$ be the space of smooth $d\times d$ symmetric non-negative matrix functions defined on $\mathbb R^d$.
We equip this space with the topology of uniform convergence on compact subsets of $\mathbb R^d$. We let
$\mathbb R^d$ act on $\Omega$ by additive translations. We denote this action by  $x.\o$.

Let $\mathbb Q$ be a Borel probability measure on $\Omega$.

\noindent
{\bf Assumption 1}.   The action $(x,\omega)\mapsto x.\, \omega.$ preserves the measure $\mathbb Q$  and  is ergodic.

\medskip
We first introduce the diffusion process without external forcing. Let  $(X^\o_0(t)\,;\,t\ge 0)$ be the solution of the stochastic differential equation in $\R^d$:
\beqn\label{eq:introsde}
dX^\o_0(t)=b^\o(X^\o_0(t))dt  +\sigma^\o(X^\o_0(t))dW_t\,;\, \qquad X^\o_0(0)=0\,,
\eeqn
where $\sigma^\o(x)=\sigma(x.\o)$ is a stationary  $d\times d$ matrix,  $b^\o(x)=\frac 12\div (\sigma^\o(x)(\sigma^\o)^*(x))$, and $(W_t\,;\, t\ge 0)$ is a $d$-dimensional Brownian motion defined on some probability space $(\WW,\FF,P)$. In the sequel, we use the notation $a^\omega(x)=\sigma^\o(x)(\sigma^\o)^*(x)$ and $a(\omega)=\sigma(\o)(\sigma)^*(\o)$.  The vector field $b^\omega$ is stationary therefore of the form
$b^\omega(x)=b(x.\omega)$ for some vector valued function $b$ defined on $\Omega$.

\medskip
Our goal is to study the behaviour of  the diffusion process $X^\o_0(t)$  perturbed by a fixed small force. The corresponding equation for the perturbed process reads
\beqn\label{eq:introsde_perturb}
dX^{\lambda,\o}_0(t)=b^\o(X^{\lambda,\o}_0(t))dt + \lambda a^\o(X^{\lambda,\o}_0(t)) e_1\,dt  +\sigma^\o(X^{\lambda,\o}_0(t))dW_t\,;\,\ \ 
X^{\lambda,\o}_0(0)=0\,,
\eeqn
where  $e_1$ is a fixed vector in $\mathbb R^d$, and $\lambda\in\mathbb R$.

In the paper we assume that  the diffusion coefficient in \eqref{eq:introsde}, \eqref{eq:introsde_perturb}
satisfies  the following uniform ellipticity condition:

\noindent
{\bf Assumption 2}.  There is  $\varkappa>0$ such that the following estimates hold:
$$
\varkappa |\zeta|^2\leq |\sigma(\omega)\zeta|^2\leq \varkappa^{-1} |\zeta|^2, \quad \hbox{for all }\omega\in\Omega\hbox{ and } \zeta\in\mathbb R^d.
$$

\noindent
We also assume that  the diffusion coefficient in \eqref{eq:introsde}, \eqref{eq:introsde_perturb} has smooth realizations:

\noindent
{\bf Assumption 3:} for any environment $\o$, the function
$x\rightarrow \sigma^\o(x)$ is smooth.

\medskip

The asymptotic behaviour of the non-perturbed symmetric diffusion \eqref{eq:introsde} was widely studied in the 70's and 80's.
It was proved, see \cite{kn:KV}, \cite{kn:Ko80}, \cite{kn:Ko85}, \cite{kn:Osada},  \cite{kn:PV}, that, under general ergodicity assumptions,
the process $X^\o_0$ shows a diffusive behaviour and satisfies the invariance principle. We endow the path space with the topology of locally uniform convergence. Then the law of the family of rescaled processes
$(\eps X^\o_0(t/\eps^2)\,;\, t\ge 0)$ weakly converges towards the law of a Brownian motion with some
covariance matrix $\Sigma$.


If $\lambda>0$ then the process $X^{\lambda,\o}_0$ is ballistic.  It was shown in \cite{kn:GMP} that
it satisfies the quenched estimates
 $$
 c_1\lambda t\leq E (X^{\lambda,\o}_0(t)\cdot e_1)\leq c_2\lambda t
 $$
with deterministic constants $c_1$, $c_2$, $0<c_1<c_2$ that only depend on the ellipticity constants
and the dimension
and do not depend on $\lambda$; here the symbol $E$ stands for the expectation related to the measure $P$ on $(\mathcal{W},\mathcal{F})$. We generalize this estimate in Lemma \ref{lm:estimateAlambda}.

However, these estimates do not  automatically imply the law of large numbers (LLN).  The LLN was proved in \cite{kn:LS} under the condition that the diffusion matrix $a^\o(x)$ has a finite range of dependence, see {\bf Assumption 4} below. The proof is based on the construction of regeneration times. This technique also yields the central limit theorem for $X^{\lambda,\o}_0$; we call $\Sigma_\lambda$ the asymptotic variance. 

These results can be better understood using the {\sl point of view of the particle} introduced in \cite{kn:PV}.
Define the process $\o^0(t)=X_0^\o.\o$, respectively  $\o^\lambda(t)=X_0^{\lambda,\o}.\o$.
One checks that $\omega^0(.)$ and $\omega^\l(.)$  are  Markov processes, and that $\Q$ is a reversible invariant measure of $\omega^0(.)$.  Using the Dirichlet form of $\omega^0(.)$, we define the Sobolev space $H^1(\Omega)$
and its adjoint $H^{-1}(\Omega)$. It was shown in \cite{kn:KV}, \cite{kn:DFGW} that the invariance principle holds for additive functionals of elements of $H^{-1}(\Omega)$. The invariance principle stated above for the
process $X^\omega_0$ is
a consequence of these more general results.

For positive $\lambda$ the measure $\mathbb Q$ is not invariant any more. Following \cite{kn:KoKr04}
we use the notion of {\sl steady state}:

\begin{df} \label{df:steadystateIntro}
Let $\lambda>0$. A  Borel probability measure $\nu_\lambda$ on $\Omega$ is called steady state if
for any bounded local function $f$, for $\mathbb Q$ almost all $\omega$ and $P$ almost surely we have
$$
\lim\limits_{t\to\infty}\ \frac{1}{t}\int_0^t f(\omega^\lambda(s))\,ds =\nu_\lambda(f),
$$
where $\omega^\lambda(s)=X_0^{\lambda,\omega}(s).\omega$.
\end{df}

Note that, if it exists, the steady state is an invariant measure for the Markov process  $\omega^\lambda(.)$ and it is  unique.

The existence of the steady state is proved in \cite{kn:KoKr06} for a model of a diffusion in a random
environment that differs a bit from ours and satisfies {\bf Assumption 4} below on a finite range of dependence.  In Section \ref{s_sec2} we shall also obtain
the existence of $\nu_\lambda$ assuming finite range of dependence  by a method that is more explicit than in \cite{kn:KoKr06}.
The existence of the steady state is not known for a general stationary ergodic environment. Furthermore, even if we happened to know that it exists for all $\lambda$,  it would not directly follow from the definition whether $\nu_\lambda$ converges to $\mathbb Q$
as $\lambda\to0$.

This motivates us to modify the definition of a steady state and to introduce the notion of weak steady state in the definition below.

The weak steady state is defined on a special subset of the space $H^{-1}(\Omega)$
that we call ${\tilde H}^{-1}_\infty(\Omega)$. The precise definition will be given in Section \ref{s_sec1}. Loosely speaking, one may think of elements in $H^{-1}(\Omega)$ as
function $f$ on $\Omega$ that can be written as the divergence of some stationary, square integrable vector field, say $f=\div F$. We call $H^{-1}_\infty(\Omega)$ the set
of $f$ in $H^{-1}(\Omega)$ for which we can choose a bounded $F$. Note that $H^{-1}_\infty(\Omega)$ is naturally endowed with a Banach space structure.
We further let  ${\tilde H}^{-1}_\infty(\Omega)$ denote the closure in $H^{-1}_\infty(\Omega)$ of the linear set of $f$ in $H^{-1}_\infty(\Omega)$ for which we can choose a bounded and local $F$. Precise definitions are given
at the beginning of Section \ref{s_sec1}.

A typical example of an element of $H^{-1}_\infty(\Omega)$ is obtained choosing $F(\omega)=a(\omega)$. Then $f(\omega)=2 b(\omega)$ is the drift term in equation
(\ref{eq:introsde}).

We shall see that, although an element of $f\in H^{-1}_\infty(\Omega)$ need not be a function, it still makes sense to consider the additive functional
$$A^{\lambda,\omega}_{0,f}(t)=\int_0^t f(\omega^\lambda(s))\,ds.$$

We thus define the notion of

\begin{df} \label{df:weaksteadystateIntro}
Let $\lambda>0$. A  continuous linear functional $\nu_\lambda$ on ${\tilde H}^{-1}_\infty(\Omega)$ is called weak steady state if
for any  $f$ in ${\tilde H}^{-1}_\infty(\Omega)$, then
\beqn\label{eq:convH-1}
\lim\limits_{t\to\infty}\ \frac{1}{t}A^{\lambda,\omega}_{0,f}(t) =\nu_\lambda(f),
\eeqn
in $L^1(\WW, P)$ for $\mathbb Q$ almost all $\omega$.
\end{df}

As we shall see in Section \ref{s_sec1}, if the convergence in (\ref{eq:convH-1}) holds for any $f$ in ${\tilde H}^{-1}_\infty(\Omega)$,
then the limit is automatically a linear continuous functional on ${\tilde H}^{-1}_\infty(\Omega)$.

Observe that due to Lemma \ref{lm:estimateAlambda} below, we could replace in Definition \ref{df:weaksteadystateIntro}
 the convergence in  $L^1(\WW, P)$ with the convergence
in $L^p(\WW, P)$ for any $p\geq 1$.  Also, due to the same Lemma, if  $ \frac{1}{t}A^{\lambda,\omega}_{0,f}(t)$ converges
$P$ almost surely, then the convergence holds in $L^1(\WW, P)$  as well.

We prove the Lipschitz continuity of weak steady states:

\begin{theo} \label{theo:continuityIntro}
There exists a constant ${\tt C}_1$ satisfying the following:
if for $\lambda$ with $0\leq \lambda\leq 1$ and $f$ in $H^{-1}_\infty(\Omega)$ the limit
$$
\lim\limits_{t\to\infty}\ \frac{1}{t}A^{\lambda,\omega}_{0,f}(t) :=\nu_\lambda(f),
$$
exists in $L^1(\WW, P)$ for $\mathbb Q$ almost all $\omega$, then
\beqn\label{eq:continuityestimateH-1} \vert \nu_\lambda(f)\vert\le{\tt C}_1\lambda \Vert f\Vert_{H^{-1}_\infty(\Omega)}.\eeqn
In particular, if the weak steady state exists for all $\lambda\in[0,1]$, then  $\nu_\lambda(f)$ converges to
$0$, as $\lambda\to0$ for all $f\in{\tilde H}^{-1}_\infty(\Omega)$.
\end{theo}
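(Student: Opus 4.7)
The plan is to deduce Theorem \ref{theo:continuityIntro} directly from the quenched uniform-in-time estimate supplied by Lemma \ref{lm:estimateAlambda}, which I expect to take the form
$$
E\bigl[|A^{\lambda,\omega}_{0,f}(t)|\bigr]\le {\tt C}\bigl(\lambda\,t+\sqrt{t}\bigr)\,\|f\|_{H^{-1}_\infty(\Omega)},
$$
valid for every $f\in H^{-1}_\infty(\Omega)$, every $\lambda\in[0,1]$, every $t>0$, and $\mathbb Q$-a.e.\ $\omega$, with a constant depending only on the ellipticity constant $\varkappa$ and the dimension $d$. Granting this, the proof is immediate: the $L^1(\WW,P)$-convergence hypothesis allows one to pass $E^\omega$ inside the limit, so $E[A^{\lambda,\omega}_{0,f}(t)]/t\to\nu_\lambda(f)$; dividing the estimate by $t$ and letting $t\to\infty$ kills the $\sqrt t/t$ term and leaves $|\nu_\lambda(f)|\le {\tt C}_1\lambda\,\|f\|_{H^{-1}_\infty(\Omega)}$. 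The \emph{in particular} part follows because each weak steady state is, by definition, a continuous functional on $\tilde H^{-1}_\infty(\Omega)$, so the bound extends to all $f$ in the closure and taking $\lambda\to0$ yields $\nu_\lambda(f)\to 0$.

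The substance of the argument is therefore the key estimate, which I would prove by fixing a representation $f=\div\,F$ with $F$ bounded and local and treating the two ranges of $\lambda$ separately. For $\lambda=0$ the process $\omega^0$ is reversible under $\mathbb Q$, so I would apply the Lyons--Zheng forward--backward martingale decomposition: for any $T\ge t$,
$$
A^{0,\omega}_{0,f}(t)=\tfrac12(M^+_t-M^+_0)-\tfrac12(\tilde M^-_T-\tilde M^-_{T-t}),
$$
where $M^+$ and $\tilde M^-$ are martingales in the forward and time-reversed filtrations of $\omega^0$ whose quadratic-variation densities are controlled pointwise by $\|F\|_\infty^2$ using uniform ellipticity. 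Burkholder's inequality then yields a quenched $O(\sqrt t\,\|F\|_\infty)$ bound.

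For $\lambda>0$ the Girsanov density of $\omega^\lambda$ with respect to $\omega^0$ has second moment growing like $e^{C\lambda^2 t}$, so I would instead build an It\^o-type decomposition adapted to the generator $L^\lambda=L^0+\lambda\,a(\omega)e_1\cdot D$, with $D$ the stationary gradient on $\Omega$. Introduce the resolvent corrector $u_\varepsilon\in H^1(\Omega)$ solving $(\varepsilon-L^0)u_\varepsilon=f$; uniform ellipticity gives $\|D u_\varepsilon\|_{L^2(\mathbb Q)}\le {\tt C}\|f\|_{H^{-1}(\Omega)}$. It\^o's formula for $\omega^\lambda$ reads
$$
A^{\lambda,\omega}_{0,f}(t)=u_\varepsilon(\omega^\lambda(t))-u_\varepsilon(\omega)+\varepsilon\!\int_0^t\! u_\varepsilon(\omega^\lambda(s))\,ds-\lambda\!\int_0^t\! a(\omega^\lambda)e_1\cdot D u_\varepsilon(\omega^\lambda)\,ds+\mathrm{mart.},
$$
and the task is to bound each term quenchedly by ${\tt C}(\lambda t+\sqrt t)\|f\|_{H^{-1}_\infty}$ uniformly in $\varepsilon$ before sending $\varepsilon\to0$. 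The boundary and $\varepsilon$-mass terms contribute $O(\sqrt t)$ after a Burkholder/Lyons--Zheng analysis of the martingale and of its time-reversed counterpart for the adjoint process.

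The main obstacle is the $\lambda$-drift term: I need to bound it in expectation by ${\tt C}\lambda t\,\|F\|_\infty$ rather than by the useless ${\tt C}\lambda t\,\|D u_\varepsilon\|_{L^2}$. This is where the boundedness of $F$ (not just its $H^{-1}$ norm) must enter: up to a $\mathbb Q$-divergence-free correction absorbed by reversibility of $L^0$, the quantity $aD u_\varepsilon\cdot e_1$ must be shown comparable to $e_1\cdot F$, hence pointwise bounded by $\|F\|_\infty$. Making this precise uniformly in $\varepsilon$ and in the environment is the quantitative core of Lemma \ref{lm:estimateAlambda} and is what distinguishes this linear-response estimate from a naive Duhamel computation; once that linearity in $\lambda$ is secured, the theorem drops out as explained above.
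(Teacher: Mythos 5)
Your reduction of the theorem to a uniform-in-time estimate of the form $E\bigl[\sup_{s\le t}|A^{\lambda,\omega}_{0,f}(s)|\bigr]\lesssim \lambda t\,\Vert f\Vert_{H^{-1}_\infty(\Omega)}$ for $t\ge\lambda^{-2}$ is exactly the structure of the paper's argument (the theorem's proof there is literally ``apply Lemma \ref{lm:estimateAlambda}''), and your $\lambda=0$ treatment via the forward--backward (Lyons--Zheng) decomposition is the paper's Lemma \ref{l_doob}. The problem is the $\lambda>0$ case, which is where all the content lies, and there your plan has a genuine gap. The drift term $\lambda\int_0^t a(\omega^\lambda(s))e_1\cdot Du_\eps(\omega^\lambda(s))\,ds$ cannot be bounded by ${\tt C}\lambda t\Vert F\Vert_\infty$ the way you propose: $aDu_\eps$ converges to $a\tilde f$, which differs from $2F$ by a field orthogonal to $L^2_{\rm pot}(\Omega)$, and that correction is in general \emph{unbounded} even when $F$ is bounded, so there is no pointwise comparison with $e_1\cdot F$. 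Nor can the correction be ``absorbed by reversibility of $L^0$'': the integral runs along the trajectory of the \emph{perturbed} process $\omega^\lambda$, whose occupation measure is not $\Q$ and is precisely the unknown object $\nu_\lambda$; bounding the time average of an unbounded, merely $L^2(\Q)$ function along $\omega^\lambda$ is the same problem you started with, so the argument is circular. (A secondary issue: Kipnis--Varadhan/Lyons--Zheng bounds are annealed, whereas the lemma you need is quenched; this is repairable for the theorem itself via Fatou, but it signals that the resolvent route is not the mechanism at work.)

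What the paper does instead is structurally different and avoids correctors altogether. It lifts the additive functional into an extra coordinate, setting $Z^{\lambda,\omega}_0(t)=(X^{\lambda,\omega}_0(t),\,A^{\lambda,\omega}_{0,f}(t)+W^1_t)$ with an independent one-dimensional Brownian motion $W^1$, and observes that since $f=\div F$ with $F$ bounded, the term $f\partial_y$ in the generator can be rewritten as the antisymmetric divergence-form expression $\div_x(F^\omega\partial_y q)-\partial_y(F^\omega\nabla_x q)$. The lifted generator is thus a uniformly elliptic divergence-form operator in $d+1$ variables with coefficients bounded by $\Vert F\Vert_\infty$ and $\varkappa$, and after the diffusive rescaling $\tilde z=\lambda z$, $\tilde t=\lambda^2 t$ the drift $\lambda a e_1$ becomes order one. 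Aronson's parabolic estimates then apply uniformly in $\lambda$ and $\omega$, yielding exponential tail bounds for the exit times of the rescaled process from balls and hence $E\bigl[\sup_{s\le t}|Z^{\lambda,\omega}_0(s)|^p\bigr]\le {\tt C}_p\lambda^{-p}(\lambda^2t+1)^p$; subtracting the $W^1$ part and using $t\ge\lambda^{-2}$ gives the lemma. If you want to salvage your write-up, replacing the resolvent/corrector step by this lifting-plus-Aronson argument is the missing idea.
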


\begin{rmk}\label{r_const}
In the next section we introduce the space $H^1(\Omega)$ in such a way that these functions have zero mean value. With this definition
the duality between functions from $H^{-1}(\Omega)$ and constants does not make sense. However,  under our assumptions, the generators $D_j$, $j=1,\ldots, d$ of the action $x.\omega$ are such that $\sqrt{-1}D_j$ are self-adjoint in $L^2(\Omega)$.
Therefore, for any $F=(F_1,\ldots, F_j)$ such that $F_j$ belongs to the domain of  $D_j$ we have
$\int_\Omega\mathrm{div}F(\omega)d\mathbb Q=-\int_\Omega F\cdot \nabla(1)d\mathbb Q=0$.

Thus all elements of $H^{-1}(\Omega)$ are centered in a certain sense.
In particular, $\int_\Omega d\mathbb Q(\omega) E[A^{0,\omega}_{0,f}(t)]=0$ for all $t$.

Therefore Equation (\ref{eq:continuityestimateH-1}) does indeed express
the Lipschitz continuity of the weak steady state $\nu_\lambda$, considered as a linear functional on ${\tilde H}^{-1}_\infty(\Omega)$.

\end{rmk}

In Section \ref{s_sec2}, we prove that weak steady states exist for all $\lambda$ if $\mathbb Q$ has finite range of dependence,
see {\bf Assumption 4} below.

\medskip

From now on, we shall discuss properties of diffusions in a media satisfying the following finite range of dependence property:
for a Borel subset $F\subset\R^d$, let $\HH_F$ denote the  $\sigma$-field
generated by $\{\sigma(x.\o)\,:\, x\in F\}$.
We assume that:

\noindent
{\bf Assumption 4:}  there exists $R$ such that for any Borel subsets $F$ and $G$
such that $d(F,G)>R$ (where $d(F,G)=\inf\{\vert x-y\vert\,:\, x\in F, y\in G\}$ is the distance between $F$ and $G$) then
\beqn \label{eq:indep0} \HH_F \hbox{ \rm and } \HH_{G} \hbox{ are independent}\,.
\eeqn

As already mentioned, under {\bf Assumption 4}, then steady states and weak steady states exist for all $\lambda$ and Theorem \ref{theo:continuityIntro} applies. We can go
one step further and show that $\nu_\lambda(f)$ has a derivative at $\lambda=0$. This is the
content of the next Theorem.

\begin{theo}\label{t_ssslIntro}
Let $f$ belong to  ${\tilde H}^{-1}_\infty(\Omega)$. Then,
the derivative of $\nu_\lambda(f)$ at $\lambda=0$ exists.
\end{theo}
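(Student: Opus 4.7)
The plan is to combine the continuity estimate from Theorem~\ref{theo:continuityIntro} with a Girsanov-type perturbation argument based on the regeneration structure of \cite{kn:LS}. By Theorem~\ref{theo:continuityIntro} the family of linear maps $f\mapsto\nu_\lambda(f)/\lambda$ is uniformly bounded on $\tilde H^{-1}_\infty(\Omega)$ with norm at most ${\tt C}_1$. Hence, to establish existence of $\lim_{\lambda\downarrow0}\nu_\lambda(f)/\lambda$ for every $f\in\tilde H^{-1}_\infty(\Omega)$, it is enough to prove it on a dense linear subspace and extend by continuity; the natural choice is $f=\div F$ with $F$ stationary, bounded and local.

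Under Assumption 4, the results of \cite{kn:LS} provide regeneration times $0<\tau_1^\lambda<\tau_2^\lambda<\cdots$ for the perturbed diffusion $X^{\lambda,\o}_0$ with the property that
\[
\nu_\lambda(f)=\frac{E\bigl[\int_{\tau_1^\lambda}^{\tau_2^\lambda} f(\o^\lambda(s))\,ds\bigr]}{E[\tau_2^\lambda-\tau_1^\lambda]}.
\]
On a single regeneration block, Girsanov's theorem represents the law of $X^{\lambda,\o}_0$ as the law of $X^\o_0$ weighted by $Z_\lambda(t)=\exp\bigl(\lambda M_t-\frac{\lambda^2}{2}\langle M\rangle_t\bigr)$, where $M_t=\int_0^t ((\sigma^\o)^*e_1)(X^\o_0(s))\cdot dW_s$. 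Expanding $Z_\lambda(t)-1=\lambda M_t+O(\lambda^2)$ reduces the study of $\nu_\lambda(f)/\lambda$ to the cross term $E^0\bigl[A^{0,\o}_{0,f}(\tau_2^\lambda)\,M_{\tau_2^\lambda}\bigr]$ divided by $E[\tau_2^\lambda-\tau_1^\lambda]$, plus an $O(\lambda)$ correction that must be shown to be harmless.

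For $f=\div F$ with $F$ bounded and local, It\^o's formula applied to $F^\o(X^\o_0(t))$ provides the Kipnis--Varadhan decomposition $A^{0,\o}_{0,f}(t)=N_t+R_t$, where $N_t$ is a square-integrable martingale with explicit quadratic variation and $|R_t|\le2\|F\|_\infty$. The cross term therefore reduces, modulo a bounded error, to the predictable bracket $\langle N,M\rangle$ evaluated at the regeneration time. Since $\langle N,M\rangle_t$ is an additive functional of the stationary ergodic process $\o^0(\cdot)$ under $\Q$, the ergodic theorem yields, after division by $E[\tau_2^\lambda-\tau_1^\lambda]$ and passage to the limit $\lambda\downarrow0$, a deterministic limit. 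This gives the existence of $\lim_{\lambda\downarrow0}\nu_\lambda(f)/\lambda$ on the dense subclass.

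The main obstacle will be the uniform control in $\lambda$ of the $O(\lambda^2)$ Girsanov remainder after division by $\lambda$, together with uniformity in $\lambda$ of the joint distribution of $\tau_2^\lambda-\tau_1^\lambda$ and of $X^{\lambda,\o}_0$ restricted to one regeneration block. Under Assumptions 3 and 4, the exponential moment estimates on the regeneration time from \cite{kn:LS} and the martingale bound on $M$ coming from uniform ellipticity should supply the needed uniformity for $\lambda\in(0,1]$, while the a priori bound of Theorem~\ref{theo:continuityIntro} and Lemma~\ref{lm:estimateAlambda} transfer the conclusion from the dense subclass to all of $\tilde H^{-1}_\infty(\Omega)$.
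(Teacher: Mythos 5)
Your overall architecture --- a ratio formula for $\nu_\lambda(f)$ over a regeneration block, a Girsanov transfer to the unperturbed measure, identification of the limit as a martingale cross-bracket, and extension from local $f$ to all of $\tilde H^{-1}_\infty(\Omega)$ via Lemma \ref{lm:estimateAlambda} --- matches the paper's strategy in outline. But there is a genuine gap at the central analytic step: the first-order expansion $e^{\lambda M_t-\frac{\lambda^2}{2}\langle M\rangle_t}=1+\lambda M_t+O(\lambda^2)$ is not valid on the regeneration scale. The regeneration times satisfy $\tau^\lambda\sim\lambda^{-2}$ (this is the whole point of the construction), so $\lambda M_{\tau^\lambda}$ and $\lambda^2\langle M\rangle_{\tau^\lambda}$ are both of order one, not $o(1)$. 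Consequently every term in the Taylor series of the exponential contributes at the same order as your putative main term $\lambda\,\mathbb E_0[A_f(\tau)M_\tau]$ (note that $A_f(\tau^\lambda)\sim\lambda^{-1}$ under $\mathbb P_0$, so the zeroth, first and all higher order terms are each $O(\lambda^{-1})$ before normalization). The remainder you propose to control is not an $O(\lambda^2)$ correction; it is an $O(1)$ contribution to $\nu_\lambda(f)/\lambda$, and the linearization argument collapses. The paper avoids this by never expanding the density: it proves a full functional limit theorem for the pair $(A_f^\eps,\lambda M(\cdot/\eps^2))$ under $\mathbb P_0$, passes to the limit in the exponential weight itself (which converges to a nondegenerate log-normal), and extracts the drift $\bar\Gamma(f)$ by Gaussian integration by parts --- this is Theorem \ref{t_LR} and Proposition \ref{p_cov} --- and then transfers this scaling limit to the regeneration functionals via the uniform exponential moment bounds of Proposition \ref{p_ert} and Lemma \ref{l_transv_b} in the Continuity Lemma \ref{t_fop}.

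Two further points you take for granted. First, for $f\in H^{-1}_\infty(\Omega)$ the functional $A^{\lambda,\omega}_{0,f}$ is not a pathwise local functional of $X$ alone, so the regeneration times of \cite{kn:LS} for $X$ do not directly give your ratio formula for $\nu_\lambda(f)$; the paper has to rebuild the regeneration structure for the augmented $(d+1)$-dimensional process $Z=(X,\,y+A_f+W^1)$, written in divergence form via $F$, with regeneration times that depend on $f$ through $R_f$ and $\Vert F\Vert_\infty$ (Section \ref{ss_reg_time}, Lemma \ref{l_lbt}). Second, your appeal to the ergodic theorem for $\frac1t\langle N,M\rangle_t$ on a single block needs uniform-in-$\lambda$ integrability of $\lambda^2\tau_1^\lambda$ and of $\lambda\sup_{s\le\tau_1^\lambda}|Z(s)|$ to interchange the limits $\lambda\to0$ and block length $\to\infty$; this is exactly what Proposition \ref{p_ert} supplies and cannot be skipped. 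The reduction from $\tilde H^{-1}_\infty(\Omega)$ to local $f$ via the uniform bound ${\tt C}_1\lambda\Vert\cdot\Vert_{H^{-1}_\infty}$ is correct and is how the paper deduces Corollary \ref{fdt} from Theorem \ref{t_sssl}.
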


Our main tool for proving the existence of the steady state and Theorem \ref{t_ssslIntro} are regeneration times.
As a matter of fact, regeneration times were already the main tools in \cite{kn:LS} (for the proof of the law of large
numbers and c.l.t. for $X_0^{\lambda,\omega}$) and in \cite{kn:KoKr06} to establish the existence of steady states;
see also \cite{kn:kesten} \cite{kn:SZ} for random walks.

In order to prove Theorem \ref{t_ssslIntro}, one needs regeneration times that do not explode faster than $\lambda^{-2}$
as $\lambda$ tends to $0$. We already faced this issue in  \cite{kn:GMP} and there we introduced appropriate modifications
to the definitions in \cite{kn:LS} to achieve the right order of magnitude. The construction we shall use here
differs a bit from \cite{kn:GMP} but it also provides
regeneration times of order $\lambda^{-2}$.
The other key ingredient in the proof of Theorem \ref{t_ssslIntro} is an explicit
expression of $\nu_\lambda(f)$ in terms of regeneration times.
Our definition makes the regeneration time depend on the function $f$.

\medskip

The proof of Theorem \ref{t_ssslIntro} also gives the value of the derivative.
Let us denote by  $\bar\Gamma(f)$ the derivative of $\nu_\lambda(f)$ at $\lambda=0$ as in Theorem \ref{t_ssslIntro}.
We now give various interpretations of $\bar\Gamma(f)$.

One proof of the invariance principle is based on the existence of a {\it corrector}:
let $\LL^\o$ be the generator of the process $X^\o_0$.
The corrector is  a (random)
function $\chi$ defined on $\R^d$, with values in $\R^d$ and satisfying the equation
\beqn\label{eq:introcorrector}
\LL^\o\chi=-b^\omega\,.
\eeqn

One shows that equation (\ref{eq:introcorrector}) has a solution with a stationary gradient, see Section \ref{sss_corr}.
If  $\sigma(\cdot)$ has finite range of dependence and $d\geq 3$, then as was proved in
\cite{kn:GMa} and \cite{kn:GO},  equation \eqref{eq:introcorrector} has a stationary solution. We show that
\beqn\label{ggamma}
\bar\Gamma(f)=-2\int_\O \chi(\o)f(\o)\,d\mathbb Q.
\eeqn
Notice that in general  equation \eqref{eq:introcorrector} need not have a stationary solution. However, the interpretation
of  the derivative of the steady state at $\lambda=0$ as the corrector remains valid in a weaker form,
see Proposition \ref{p_gamma}.

In Lemma \ref{l_cova} we give another interpretation of $\bar\Gamma(f)$ as a covariance. Finally we can also obtain
$\bar\Gamma(f)$ as a drift term for the scaling limit of a perturbed diffusion with vanishing strength in the
so-called Lebowitz-Rost scaling discussed in Section \ref{s_leb-rost}.

These last interpretations of $\bar\Gamma(f)$ are in good agreement with Fluctuation-Dissipation-Theorems that
predict that the linear response of a system in equilibrium can be expressed as a correlation.

 In the Appendix A, we briefly discuss the case of a periodic environment where the construction of steady states
is immediate and the expression of the derivative of the steady state in terms of the corrector (\ref{ggamma})
follows by directly
comparing the periodic boundary value problems for PDE's satisfied by these quantities.



The proof of Theorem \ref{t_ssslIntro} relies of the Continuity Lemma \ref{t_fop} which gives
the scaling limit  on the regeneration scale of the joint law of  $X^{\lambda,\omega}_0$ and $A^{\lambda,\omega}_{0,f}$
for a local function $f$ in ${\tilde H}^{-1}_\infty$.

Another important consequence of Lemma \ref{t_fop} is the continuity of the asymptotic variance
$\Sigma_\lambda$ at $\lambda=0$.


\medskip





\medskip

The organization of the paper is as follows.

In Section \ref{s_LR-scal} we consider rather general stationary environments and discuss scaling limits of additive functionals of the environment seen from the particle either in the case $\l=0$
or, more generally, in the Lebowitz-Rost scaling. The material from this part cannot be called `new': it is mainly a rephrasing of arguments borrowed from references \cite{kn:KV}, \cite{kn:DFGW}
and \cite{kn:LR}. For the background materials we refer to the books \cite{kn:JKO} and \cite{kn:KoLaOl12}. However we found it necessary to include some details in this part as the precise statements needed in the sequel are not always easy to find in the references.
We believe it makes the paper more self-contained and easier to read.

In Section \ref{s_sec1} we investigate continuity properties of steady states and prove Theorem \ref{theo:continuityIntro}.

Section \ref{s_sec2} is devoted to the construction of regeneration times and of the steady state and weak steady state assuming the environment has finite range of dependence.
Our regeneration times are not exactly as in \cite{kn:KoKr04,kn:KoKr06}.
Indeed, in our construction, the definition of the regeneration times depends on the function $f$. This point of view allows for an explicit expression of $\nu_\l(f)$.


In Section \ref{ss_h-1} we let $\l$ tend to $0$.
The crucial role here is played by the  estimates obtained in \cite{kn:GMP} and  by  uniform estimates for the scaled regeneration times
in the case $f\in {\tilde H}^{-1}_\infty$.
We obtain the general Continuity Lemma \ref{t_fop}.
As a first consequence we prove the existence of and identify the derivative of $\nu_\l$ at $\l=0$. Finally, in Section \ref{s_sec4} we also obtain a continuity property of the asymptotic variance
$\Sigma_\l$  and we derive from the general continuity lemma the validity of the Einstein relation in a way that differs from \cite{kn:GMP}.

\medskip
\begin{rmk}

The questions addressed in this paper can also be raised for discrete models of random walks among random conductances.
This is the object of the recent paper \cite{kn:GGN}. (Our two papers are simultaneous. They cannot be called 'independent'
as the two teams kept contacts during all the elaboration of the two preprints.)

In \cite{kn:GGN}, the authors consider random walks with uniformly elliptic conductances, only the i.i.d. case being studied. Their main result is the Einstein relation, which
they obtain following a strategy similar to that in \cite{kn:GMP}. In particular they construct regeneration times of the correct order.
On top of it \cite{kn:GGN} discusses regularity properties of the steady state $\nu_\lambda$.

The approach used in \cite{kn:GGN} is more quantitative than ours: the authors
assume that $d\geq 3$, so that there exists a stationary corrector and local bounded functions are in $H^{-1}(\Omega)$. Furthermore, they
crucially rely on results from \cite{kn:Mourrat} that quantify the ergodicity of the environment seen from the particle.
As a result, they obtain the continuity of the steady state acting on local bounded continuous functions - that we do not get here - and they show, for $d\geq 3$, fluctuation-dissipation relations
similar to our Theorem \ref{t_sssl} and Corollary \ref{fdt}.

Here we preferred to take the '$H^{-1}$ point of view' as a starting point: we view the steady state as a linear functional on ${\tilde H}^{-1}_\infty$ rather than as a measure,
see Definitions \ref{df:steadystateIntro} and  \ref{df:weaksteadystateIntro}. This allows us to include the two-dimensional case and
to get continuity results for general ergodic environments, see Theorem \ref{theo:continuityIntro}.
As for the FDT, we do not use quantitative bounds on the ergodicity of
environment seen from the particle but rather make an extensive use of scaling limits, see Lemma \ref{t_fop}.
Our approach also yields the
results on the continuity of the variance.

\end{rmk}


\section{Homogenization of additive functionals}\label{s_LR-scal}

Let $\Omega$ be a separable topological space, equipped with a measurable action of  $\mathbb R^d$ that we denote
$$
(x,\omega)\mapsto x.\, \omega.
$$
Let $\mathbb Q$ be a Borel probability on $\Omega$.
We denote by ${D}= ({D}_1,\ldots,{D}_d)$ the generator of this action.

We refer to the books  \cite{kn:JKO} and  \cite{kn:KoLaOl12} for further details of the dynamical system $x.\, \omega$ and its generator.

\medskip
\noindent
{\bf Assumption 1}.   The action $(x,\omega)\mapsto x.\, \omega.$ preserves the measure $\mathbb Q$  and  is ergodic.

\bigskip
\noindent
Let $\sigma$ be a measurable symmetric $d\times d$ matrix valued function defined on   $\Omega$.

\medskip
\noindent
{\bf Assumption 2}.  There is  $\varkappa>0$ such that the following estimates hold:
$$
\varkappa |\zeta|^2\leq |\sigma(\omega)\zeta|^2\leq \varkappa^{-1} |\zeta|^2, \quad \hbox{for all }\omega\in\Omega\hbox{ and } \zeta\in\mathbb R^d.
$$

\bigskip
\noindent
 Let  $\mathcal{D}=\{g\in L^2(\Omega)\,;\, Dg\in (L^2(\Omega))^d\}$  be  the $L^2$ domain of the following bilinear form:
 $$
 (f,g) \longrightarrow \frac{1}{2}\int_\Omega\sigma Df\cdot\sigma Dg\,d\mathbb Q =: \mathfrak{E}(f,g).
 $$
The bilinear form $\mathfrak{E}(f,g)$ with domain  $\mathcal{D}$ is a Dirichlet form. We postulate the existence of a  Hunt process
with continuous paths whose Dirichlet form is $(\mathfrak{E}, \mathcal{D})$. We denote by $\omega(s)$ the coordinate process
on path space  $C(\mathbb R^+, \Omega)$. We denote by $\mathbb P_0$ the law of the Hunt process with initial law $\mathbb Q$.

 We also introduce the subspaces of centered functions
 $$
 L^2_0(\Omega)=\left\{u\in L^2(\Omega)\,:\, \int_\Omega u\,d\mathbb Q=0\right\},\qquad
 \mathcal{D}_0=\left\{u\in \mathcal{D}\,:\, \int_\Omega u\,d\mathbb Q=0\right\}.
 $$
Due to the ergodicity,  the quadratic form
$$
\mathfrak{E}(f)= \frac{1}{2}\int_\Omega\sigma Df\cdot\sigma Df\,d\mathbb Q
 $$
 defines a norm on $\mathcal{D}_0$.  We introduce $H^1(\Omega)$ as the completion of $\mathcal{D}_0$
with respect to $\mathfrak{E}$. By construction,     $H^1(\Omega)$  is a Hilbert space.

We then define $H^{-1}(\Omega)$ as the dual space to $H^1(\Omega)$.
Let $\mathcal A$ be the linear subset of $L_0^2(\Omega)$ consisting of functions  $f\in L_0^2(\Omega)$ such that
for some constant $c$ and for any $u\in \mathcal{D}_0$ the following inequality holds
\begin{equation}\label{h-1}
\Big(\int_\Omega fu\,d\mathbb Q\Big)^2\leq c^2 \mathfrak{E}(u).
\end{equation}
The map $u\to \int_\Omega fu d\mathbb Q$ defines an element in $H^{-1}(\Omega)$ whose norm is the smallest constant
$c$ for which inequality \eqref{h-1} holds true,  so that we can interpret  $\mathcal{A}$
as a subset of $H^{-1}(\Omega)$.   Then $\mathcal{A}$ is dense in $H^{-1}(\Omega)$.
With this construction we may identify $\mathcal{A}$ with $L^2_0(\Omega)\cap H^{-1}(\Omega)$. In what follows we use the latter notation.

 Let $L^2_{\rm pot}(\Omega)$ be the closure of $ \{v=Du\,:\,u\in \mathcal{D}_0\}$
in  the space $\big(L^2(\Omega)\big)^d$ equipped with the norm $(\frac{1}{2}\int_\Omega|\sigma v|^2\,d\mathbb Q)^{1/2}$.
By construction $L^2_{\rm pot}(\Omega)$ is a Hilbert space.

Let $f\in L^2_0(\Omega)\cap H^{-1}(\Omega)$. Setting
$$
\langle f,Du\rangle=\int_\Omega f u\,d\mathbb Q
$$
we can interpret $f$ as a linear continuous functional on $L^2_{\rm pot}(\Omega)$.  Using the Riesz theorem
we identify $f$ with an element $\tilde f\in L^2_{\rm pot}(\Omega)$. In other words, $\widetilde f$ is the unique element of
$L^2_{\rm pot}(\Omega)$ such that
$$
\int_\Omega fu \,d\mathbb Q=\frac{1}{2} \int_\Omega \sigma\widetilde f\cdot\sigma Du\,d\mathbb Q \quad\hbox{for all }u\in\mathcal{D}_0.
$$
 Observe that the map $f\mapsto\tilde f$ preserves the norms in $H^{-1}(\Omega)$ and   $L^2_{\rm pot}(\Omega)$.
 Therefore, it extends to an isometry between $H^{-1}(\Omega)$ and   $L^2_{\rm pot}(\Omega)$.

Let us introduce the notation
$$
\Sigma(f)=2\|f\|^2_{H^{-1}(\Omega)}=\int_\Omega |\sigma\tilde f|^2\,d\mathbb Q
$$
 and
 \begin{equation}\label{dsig}
\Sigma(f,g)=2( f,g)_{H^{-1}(\Omega)}=\int_\Omega \sigma\tilde f\cdot\sigma\tilde g \,d\mathbb Q.
\end{equation}

\subsection{Invariance principle}\label{ss_inp}
Given a square integrable and centered function $f\,:\,\Omega\mapsto \mathbb R$ satisfying   (\ref{h-1}),
and given a continuous trajectory $(\omega(s)\,;\,s\geq 0)$ in $\Omega$, we set
$$
A_f(t)=\int_0^t f(\omega(s))\,ds.
$$
Observe that  the process $(A_f(t)\,;\,t\geq 0)$ is an additive functional of
the  process $(\omega(t)\,;\,t\geq 0)$.    As was proved in  \cite{kn:KV}, the following invariance principle
holds:
\begin{theo}\label{t_KiVa}
Let $f\,:\,\Omega\mapsto \mathbb R$ be a square integrable and centered function satisfying   (\ref{h-1}).
Then under $\mathbb P_0$ the family of processes $(A^\eps_f(t)=\eps A_f(t/\eps^2)\,;\,t\geq 0)$  converges in law,
as $\eps\to 0$, in $C([0,\infty),\mathbb R)$ towards a Brownian motion with variance $\Sigma(f)$.
Moreover,
$$
\frac{1}{t}\mathbb E_0\left[A_f^2(t)\right]\  \mathop{\longrightarrow}\limits_{t\to\infty}\ \Sigma(f).
$$
\end{theo}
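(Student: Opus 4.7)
The plan is to prove Theorem \ref{t_KiVa} via the classical Kipnis--Varadhan martingale approximation, exploiting reversibility of the Hunt process for the $L^2$ variance statement.

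For $\alpha > 0$, let $u_\alpha \in \mathcal{D}_0$ solve the resolvent equation $\alpha u_\alpha - L u_\alpha = f$, where $L$ is the self-adjoint generator attached to $(\mathcal{E}, \mathcal{D})$. Testing against $u_\alpha$ gives $\alpha \|u_\alpha\|_{L^2}^2 + \mathcal{E}(u_\alpha) = \int_\Omega f u_\alpha\, d\mathbb{Q}$, and the defining $H^{-1}$-inequality (\ref{h-1}) yields the a priori bounds $\alpha\|u_\alpha\|_{L^2}^2 \leq \|f\|_{H^{-1}(\Omega)}^2$ and $\mathcal{E}(u_\alpha) \leq \|f\|_{H^{-1}(\Omega)}^2$. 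The spectral theorem applied to $-L$ refines these, as $\alpha \to 0$, to $\alpha \|u_\alpha\|_{L^2}^2 \to 0$ and $\sigma D u_\alpha \to \sigma \widetilde f$ in $L^2(\Omega)$ (integrability of $1/\mu$ near $0$ against the spectral measure of $f$ being precisely the $H^{-1}$ hypothesis), hence $2\mathcal{E}(u_\alpha) \to 2\|f\|_{H^{-1}(\Omega)}^2 = \Sigma(f)$.

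Writing $f = \alpha u_\alpha - L u_\alpha$ and applying Dynkin's formula to $u_\alpha$ along the stationary path produces the decomposition
\[
A_f(t) = M_\alpha(t) + u_\alpha(\omega(0)) - u_\alpha(\omega(t)) + \alpha \int_0^t u_\alpha(\omega(s))\,ds,
\]
where $M_\alpha$ is a continuous $\mathbb{P}_0$-martingale with bracket $\langle M_\alpha\rangle_t = \int_0^t |\sigma D u_\alpha|^2(\omega(s))\,ds$. Coupling $\alpha = \alpha(\eps) \to 0$, for instance $\alpha = \eps^2$, stationarity and the a priori bounds give
\[
\mathbb{E}_0\big|\eps u_\alpha(\omega(t/\eps^2))\big|^2 = \eps^2\|u_\alpha\|_{L^2}^2, \qquad \mathbb{E}_0\bigg|\eps\alpha \int_0^{t/\eps^2}u_\alpha(\omega(s))\,ds\bigg|^2 \leq \frac{t^2 \alpha}{\eps^2}\cdot \alpha\|u_\alpha\|_{L^2}^2,
\]
both tending to zero, so the non-martingale remainders are negligible at the diffusive scale.

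For each fixed $\alpha$ the stationary ergodic theorem gives $\eps^2\langle M_\alpha\rangle_{t/\eps^2} \to 2t\,\mathcal{E}(u_\alpha)$ in $\mathbb{P}_0$-probability, and the continuous-martingale CLT then yields a Brownian limit of variance $2\mathcal{E}(u_\alpha)$ for $\eps M_\alpha(t/\eps^2)$. A diagonal argument in $(\eps, \alpha)$, legitimate thanks to the uniform bound $\mathcal{E}(u_\alpha) \leq \|f\|_{H^{-1}(\Omega)}^2$ and Doob's inequality (which secure tightness of the martingale family), combined with $2\mathcal{E}(u_\alpha) \to \Sigma(f)$, delivers the invariance principle with limit variance $\Sigma(f)$. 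The $L^2$ statement is most directly obtained from reversibility: with $\mu_f$ the spectral measure of $-L$ at $f$,
\[
\frac{1}{t}\mathbb{E}_0[A_f^2(t)] = 2\int_0^\infty \bigg(\frac{1}{\mu} - \frac{1 - e^{-\mu t}}{\mu^2 t}\bigg)\,d\mu_f(\mu) \longrightarrow 2\int_0^\infty \frac{d\mu_f(\mu)}{\mu} = \Sigma(f)
\]
by dominated convergence. The main obstacle is this joint $(\eps, \alpha)$ passage to the limit: the ergodic theorem supplies bracket convergence only pointwise in $\alpha$, so $\alpha(\eps)$ must decay slowly enough for the continuous-martingale CLT to apply but fast enough for the remainder estimates to vanish, which is the classical but delicate diagonal heart of the Kipnis--Varadhan argument.
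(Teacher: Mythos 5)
Your proposal is correct and coincides with the paper's approach: the paper gives no proof of Theorem \ref{t_KiVa} but simply cites Kipnis--Varadhan \cite{kn:KV}, whose argument is exactly the resolvent/martingale approximation you reconstruct (and the paper itself notes that ``the approach of \cite{kn:KV} provides a martingale approximation for $A_f$''). Your spectral computation for the $L^2$ statement and the remainder estimates with $\alpha=\eps^2$ are the standard and correct implementation of that scheme.
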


In fact, the approach of  \cite{kn:KV} provides a martingale approximation for $A_f$.    It then follows
that for any finite collection  $(f_1,\ldots, f_n)$ of functions satisfying the assumptions of the above theorem
the joint invariance principle holds for the $n$-dimensional additive functional
$(A_{f_1},\ldots, A_{f_n})$ with limit covariance matrix $\{\Sigma(f_i,f_j)\}_{i,j=1}^n$.
Moreover, if $(M_1,\ldots, M_k)$ are continuous square integrable  martingale additive functionals, then
the $(n+k)$-dimensional additive functional $(A_{f_1},\ldots, A_{f_n},M_1,\ldots, M_k )$ satisfies
the joint invariance principle.

\subsection{Extension to $H^{-1}$}\label{s_exten}
In this section we extend the previous result to all elements of $H^{-1}(\Omega)$. This extension relies on the following lemma.
\begin{lm}\label{l_doob}
For any  $g\,:\,\Omega\mapsto \mathbb R$ being a square integrable and centered function satisfying   (\ref{h-1})
and any $t>0$ we have
\begin{equation}\label{doob}
\mathbb E_0\left[\big(\sup\limits_{s\leq t}| A_g(s)|\big)^2\right]\leq 8t\|g\|^2_{H^{-1}(\Omega)}.
\end{equation}
\end{lm}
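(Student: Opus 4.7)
\emph{Plan.} My plan is to combine the Kipnis--Varadhan resolvent approximation with Doob's $L^2$ maximal inequality. For each $\alpha>0$, introduce the resolvent $u_\alpha:=(\alpha-L)^{-1}g\in\mathcal D$ (where $L$ denotes the self-adjoint generator of $(\mathcal E,\mathcal D)$), which solves $\alpha u_\alpha-Lu_\alpha=g$. Multiplying by $u_\alpha$ and integrating against $\Q$ yields $\int_\Omega g u_\alpha\,d\Q=\mathcal E(u_\alpha)+\alpha\|u_\alpha\|_{L^2(\Q)}^2$, which inserted into the defining inequality~(\ref{h-1}) gives the a priori bounds $\mathcal E(u_\alpha)\leq\|g\|_{H^{-1}}^2$ and $\alpha\|u_\alpha\|_{L^2(\Q)}^2\leq\|g\|_{H^{-1}}^2$.

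Since the Hunt process has continuous paths, It\^o's formula applied to $u_\alpha(\omega(\cdot))$ produces the decomposition
\[
A_g(s)=M_s^\alpha+r_s^\alpha,\qquad r_s^\alpha:=u_\alpha(\omega(0))-u_\alpha(\omega(s))+\alpha\int_0^s u_\alpha(\omega(r))\,dr,
\]
where $M^\alpha$ is a continuous $L^2$-martingale with bracket $\langle M^\alpha\rangle_s=\int_0^s|\sigma Du_\alpha|^2(\omega(r))\,dr$. Thus $\E_0[(M_t^\alpha)^2]=2t\,\mathcal E(u_\alpha)\leq 2t\|g\|_{H^{-1}}^2$, and Doob's $L^2$ maximal inequality produces the key estimate $\E_0[\sup_{s\leq t}(M_s^\alpha)^2]\leq 8t\|g\|_{H^{-1}}^2$. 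To transfer this bound to $A_g$, I would let $\alpha\downarrow 0$: by the isometry $H^{-1}(\Omega)\simeq L^2_{\rm pot}(\Omega)$, $\sigma Du_\alpha\to\sigma\tilde g$ in $L^2(\Omega)^d$, so Doob applied to $M^\alpha-M^\beta$ shows that $(M^\alpha)$ is Cauchy in $L^2(\P_0)$ uniformly on $[0,t]$ and converges to a continuous martingale inheriting the $8t\|g\|_{H^{-1}}^2$ bound. The drift part $\alpha\int_0^s u_\alpha\,dr$ of $r_s^\alpha$ vanishes in sup-$L^2$ norm, since $\alpha^2\E_0\big[\big(\int_0^t|u_\alpha(\omega(r))|\,dr\big)^2\big]\leq\alpha t^2\|g\|_{H^{-1}}^2\to 0$.

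\emph{Main obstacle.} The delicate step is showing that the corrector difference $u_\alpha(\omega(0))-u_\alpha(\omega(s))$ disappears in sup-$L^2$ norm uniformly in $s\in[0,t]$, since $\|u_\alpha\|_{L^2(\Q)}$ may diverge as $\alpha\downarrow 0$ when $g$ lies outside the range of $-L$. I would handle this by first establishing the estimate for the dense class $g=-Lu$ with $u\in L^\infty\cap\mathcal D$, where the corrector is bounded, the It\^o decomposition is exact, and Doob applied to the associated martingale $M^u$ gives the bound literally; then I would extend by density using the variance estimate $\E_0[A_g(t)^2]\leq 2t\|g\|_{H^{-1}}^2$ (a spectral consequence of reversibility) together with Doob to lift the bound from the value at time $t$ to the supremum on $[0,t]$. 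Reversibility of the Hunt process is used implicitly to equate energy integrals with martingale brackets, and the factor $8$ appears as Doob's constant $4$ times the variance bound $2t\|g\|_{H^{-1}}^2$.
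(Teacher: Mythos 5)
Your plan has a genuine gap at exactly the point you flag as the ``main obstacle'', and the proposed work\-arounds do not close it. In the Kipnis--Varadhan decomposition $A_g(s)=M_s^\alpha+r_s^\alpha$, the corrector difference $u_\alpha(\omega(0))-u_\alpha(\omega(s))$ does \emph{not} vanish as $\alpha\downarrow 0$ at a fixed finite time: writing $u_\alpha(\omega(s))-u_\alpha(\omega(0))=M_s^\alpha-A_g(s)+\alpha\int_0^s u_\alpha(\omega(r))\,dr$ and letting $\alpha\downarrow 0$, it converges in sup-$L^2$ to $M_s-A_g(s)=:-R_s$, and a spectral computation (e.g.\ $\mathbb E_0[A_g(t)^2]=2t\|g\|^2_{H^{-1}}-2\int e^{-2}(1-e^{-te})\,d\mu_g(e)$ versus $\mathbb E_0[M_t^2]=2t\|g\|^2_{H^{-1}}$) shows $R$ is generically nonzero; only $\mathbb E_0[R_t^2]/t\to0$ as $t\to\infty$, which is why Kipnis--Varadhan is an asymptotic statement and not an exact identity. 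Moreover, even if the remainder could be controlled, the constant $8$ is unreachable by this route: Doob applied to the forward martingale alone already gives $\mathbb E_0[\sup_{s\le t}|M_s|^2]\le 4\mathbb E_0[\langle M\rangle_t]=8t\|g\|^2_{H^{-1}}$, so any additive nonzero remainder (such as the $O(\|u\|_\infty^2)$ term you would pick up for the dense class $g=-Lu$, which does not scale with $\|g\|_{H^{-1}}$) overshoots the claimed bound. Finally, the fallback of ``using the variance estimate together with Doob to lift the bound from time $t$ to the supremum'' is not legitimate, since $A_g$ is not a (sub)martingale and Doob's maximal inequality does not apply to it.

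The paper avoids all of this with the Lyons--Zheng forward--backward martingale representation, which is an \emph{exact} identity with no remainder: for $0\le s\le t$,
\begin{equation*}
A_g(s)=-\tfrac12\big(M(s)+(M(t)-M(t-s))\circ r_t\big),
\end{equation*}
where $M$ is the forward martingale with bracket $\int_0^\cdot|\sigma\tilde g|^2(\omega(r))\,dr$ and the second term is a martingale in $s$ for the time-reversed process, which has the same law as the forward one because $\mathbb P_0$ is reversible. Doob's inequality gives $\mathbb E_0[\sup_{s\le t}|M(s)|^2]\le 4\mathbb E_0[\langle M\rangle(t)]=8t\|g\|^2_{H^{-1}}$ for each of the two martingales, and the prefactor $\tfrac12$ together with $(a+b)^2\le 2(a^2+b^2)$ preserves the constant $8$. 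If you want to salvage your approach, you essentially have to replace the resolvent approximation by this forward--backward splitting; the resolvent route can at best recover the invariance principle, not the nonasymptotic maximal inequality with this constant.
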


\begin{proof}
The proof relies on the forward-backward martingale representation  of $A_g$; see \cite[chapter 5.7]{kn:FOT}.
Denote by $r_t$ the time reversal operator at time $t$: \  $\omega\circ r_t(s)=\omega(t-s)$ for all $s\in [0,t]$.
Then,
\begin{equation}\label{ma_fb}
A_g(s)= \frac{1}{2}\big(M(s)+(M(t)-M(t-s))\circ r_t\big) ,
\end{equation}
where, under $\mathbb P_0$,   $M$ is a continuous square integrable martingale with
bracket
$$
\langle M\rangle(t)=\int_0^t|\sigma \tilde g|^2(\omega(s))\,ds.
$$
The first martingale on the right hand side of (\ref{ma_fb}) can be estimated using Doob's inequality as follows:
$$
\mathbb E_0 \big[\sup\limits_{s\leq t}|M(s)|^2\big]\leq
4\mathbb E_0  \big[\langle M\rangle(t)\big]=4t\int_\Omega |\sigma \tilde g|^2\,d\mathbb Q=8t\|g\|^2_{H^{-1}(\Omega)}.
$$
The second term can be treated in a similar way taking advantage of the fact that $\mathbb P_0$ is invariant with respect to $r_t$.
\end{proof}

The first consequence of the lemma is that we can make sense of $A_f$ for $f\in H^{-1}(\Omega)$.
 Observe that  $f\in L_0^2(\Omega)\cap H^{-1}(\Omega)$ vanishes as an element of $H^{-1}(\Omega)$ iff $f=0$ $\mathbb Q$-a.s.
Due to the lemma, the map $f\mapsto (A_f(t)\,;\,t\geq0)$  is linear continuous  from $L_0^2(\Omega)\cap H^{-1}(\Omega)$ equipped with
$H^{-1}(\Omega)$ topology to
$L^2(\Omega, C[0,\infty))$.
Since  $L_0^2(\Omega)\cap
H^{-1}(\Omega)$ is dense in $H^{-1}(\Omega)$, this map extends to a linear continuous map on $H^{-1}(\Omega)$.
We will sometimes abuse notation and keep the notation
$$A_f(t)=\int_0^t f(\omega(s))\,ds$$
for $f\in H^{-1}(\Omega)$.

The following extension of Theorem \ref{t_KiVa} follows from Lemma \ref{l_doob}.
\begin{theo}\label{t_KiVa_h-1}
Let $f\in H^{-1}(\Omega)$.
Then under $\mathbb P_0$ the family of processes \\ $(A^\eps_f(t)=\eps A_f(t/\eps^2)\,;\,t\geq 0)$  converges in law,
as $\eps\to 0$, in $C[0,\infty)$ towards a Brownian motion with variance $\Sigma(f)$.
Moreover,
$$
\frac{1}{t}\mathbb E_0\left[A_f^2(t)\right]\  \mathop{\longrightarrow}\limits_{t\to\infty}\ \Sigma(f).
$$
\end{theo}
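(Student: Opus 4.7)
The plan is to upgrade Theorem \ref{t_KiVa} from $L^2_0(\Omega)\cap H^{-1}(\Omega)$ to all of $H^{-1}(\Omega)$ by a standard density-plus-uniform-tightness argument, using the estimate of Lemma \ref{l_doob} as the key quantitative input.

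First, I would fix $f\in H^{-1}(\Omega)$ and choose an approximating sequence $f_n\in L^2_0(\Omega)\cap H^{-1}(\Omega)$ with $\|f-f_n\|_{H^{-1}(\Omega)}\to 0$; density is exactly what is used in Section~\ref{s_exten} to extend $A_\cdot$ to $H^{-1}(\Omega)$. Note that by this continuous extension, Lemma~\ref{l_doob} is automatically valid for every $g\in H^{-1}(\Omega)$, so in particular
\[
\mathbb E_0\Big[\sup_{s\le T}\big|A^\eps_f(s)-A^\eps_{f_n}(s)\big|^2\Big]
\;=\;\eps^2\,\mathbb E_0\Big[\sup_{u\le T/\eps^2}\big|A_{f-f_n}(u)\big|^2\Big]
\;\le\; 8\,T\,\|f-f_n\|^2_{H^{-1}(\Omega)},
\]
with a bound that is uniform in $\eps\in(0,1]$.

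Next, for each fixed $n$, Theorem~\ref{t_KiVa} applies to $f_n$ and gives $A^\eps_{f_n}\Rightarrow B^{(n)}$ in $C([0,\infty),\mathbb R)$ as $\eps\to 0$, where $B^{(n)}$ is a Brownian motion of variance $\Sigma(f_n)=2\|f_n\|^2_{H^{-1}(\Omega)}$. Since the $H^{-1}$-isometry yields $\Sigma(f_n)\to\Sigma(f)$, the processes $B^{(n)}$ converge in law to a Brownian motion $B$ of variance $\Sigma(f)$. Combining this with the uniform-in-$\eps$ closeness of $A^\eps_f$ and $A^\eps_{f_n}$ displayed above, a standard triangular convergence lemma (e.g. Theorem~3.2 in Billingsley) yields $A^\eps_f\Rightarrow B$ in $C([0,\infty),\mathbb R)$, which is the invariance principle claimed.

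Finally, for the variance formula, I would write
\[
\Big(\tfrac{1}{t}\mathbb E_0[A_f^2(t)]\Big)^{1/2}
\;\le\;\Big(\tfrac{1}{t}\mathbb E_0[A_{f_n}^2(t)]\Big)^{1/2}
+\Big(\tfrac{1}{t}\mathbb E_0[A_{f-f_n}^2(t)]\Big)^{1/2},
\]
bound the last term by $\sqrt{8}\,\|f-f_n\|_{H^{-1}(\Omega)}$ via Lemma~\ref{l_doob}, let $t\to\infty$ using Theorem~\ref{t_KiVa} to get $\Sigma(f_n)^{1/2}$ for the first term, and then let $n\to\infty$ using $\Sigma(f_n)\to\Sigma(f)$; the same triangle inequality in reverse gives the matching lower bound.

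The main obstacle, mild as it is, is verifying the triangular-array passage to the limit in law: one must check that a Brownian limit persists under a perturbation which is only small in a $\sup_\eps\mathbb E_0[\sup_{s\le T}|\cdot|^2]$ sense, rather than almost surely. The uniform bound from Lemma~\ref{l_doob} is precisely what makes this step work, and it is also what encapsulates all the continuity of the map $f\mapsto A_f$ needed to conclude.
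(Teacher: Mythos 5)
Your proposal is correct and is essentially the argument the paper intends: the paper simply asserts that Theorem \ref{t_KiVa_h-1} "follows from Lemma \ref{l_doob}", and your density-plus-uniform-maximal-inequality argument (with the correct scaling $\eps^2\cdot 8(T/\eps^2)\|f-f_n\|^2_{H^{-1}}=8T\|f-f_n\|^2_{H^{-1}}$ and the standard convergence-together lemma) is exactly the intended filling-in of that assertion. The $L^2$ triangle-inequality treatment of the variance limit is likewise the natural completion and is sound.
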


Notice that as in Theorem \ref{t_KiVa},  for any finite collection  $(f_1,\ldots, f_n)$ of elements of $H^{-1}(\Omega)$,
the joint invariance principle holds for the vector
$(A_{f_1},\ldots, A_{f_n})$ with  limit covariance matrix $\{\Sigma(f_i,f_j)\}_{i,j=1}^n$.  If $(M_1,\ldots, M_k)$ are continuous square integrable martingale additive functionals, then
the $(n+k)$-dimensional additive functional $(A_{f_1},\ldots, A_{f_n},M_1,\ldots, M_k )$ satisfies
the joint invariance principle.

For $f,\,g\in H^{-1}(\O)$ we have
\begin{equation}\label{cro_co}
\frac{1}{t}\mathbb E_0\left[A_f(t)A_g(t)\right]\  \mathop{\longrightarrow}\limits_{t\to\infty}\ \Sigma(f,g).
\end{equation}

\subsection{Lebowitz-Rost type results}\label{s_leb-rost}

Let $(M(t)\,;\,t\geq 0$ be a continuous martingale additive functional of the Markov process $\omega(\cdot)$. Then $M(t)$
is a continuous martingale with stationary increments under $\mathbb P_0$.  We assume that its bracket
is of the form
$$
\langle M\rangle(t)= \int_0^t m(\omega(s))\,ds
$$
with $m\in L^\infty(\Omega)$.

For  $\lambda\in \mathbb R$, let  $\mathbb P_0^\lambda$ be the measure on path space that satisfies
$$
\frac{d\mathbb P_0^\lambda\big|_{\mathcal F_t}}{d\mathbb P_0\big|_{\mathcal F_t}}=e^{\lambda M(t)-
\frac{\lambda^2}{2}\langle M\rangle(t)}
$$
for all $t\geq0$.

It follows from our assumptions that $M(\cdot)$ satisfies the invariance principle. Let $f\in H^{-1}(\Omega)$. Observe that
the pair $(A_f,M)$ satisfies the joint invariance principle under $\mathbb P_0$. We denote by $\Gamma_M$ the off-diagonal term of the limit covariance
matrix. It follows from the assumptions on $M(\cdot)$ and
(\ref{doob}) that
$$
\Gamma_M(f)=\lim\limits_{t\to\infty}\frac{1}{t}\mathbb E_0\left[A_f(t)M(t)\right].
$$

\begin{theo}\label{t_LR}
Let $f\in H^{-1}(\Omega)$, and let $\alpha$ be a positive real number.
Then under $\mathbb P_0^\lambda$ the family of processes $(A^\eps_f(t)=\eps A_f(t/\eps^2)\,;\,t\geq 0)$  converges in law in $C[0,\infty)$,
as $\eps\to 0$, $\lambda\to 0$ and $\lambda^2/\eps^2\to\alpha$,  towards a Brownian motion with  variance $\Sigma(f)$ and  constant drift $\sqrt{\alpha}\,\Gamma_M(f)$.
\end{theo}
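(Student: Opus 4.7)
The plan is to reduce the statement to a Girsanov/Cameron--Martin shift applied to the limiting Brownian motion obtained from Theorem~\ref{t_KiVa_h-1}, by re-expressing the Radon--Nikodym density in the rescaled coordinates. Introducing $M^\eps(t)=\eps M(t/\eps^2)$ and $\langle M\rangle^\eps(t)=\eps^2\langle M\rangle(t/\eps^2)$, I would rewrite, for each $T>0$,
$$
\frac{d\P_0^\l|_{\FF_{T/\eps^2}}}{d\P_0|_{\FF_{T/\eps^2}}}
= \exp\!\Big(\tfrac{\l}{\eps}\,M^\eps(T) - \tfrac{\l^2}{2\eps^2}\,\langle M\rangle^\eps(T)\Big)=:Z^\eps(T).
$$
In the joint limit $\eps,\l\to 0$ with $\l^2/\eps^2\to\a$ the prefactors tend to $\sqrt{\a}$ and $\a$, and because $m\in L^\infty(\O)$ the ergodic theorem gives $\langle M\rangle^\eps(t)\to t\int_\O m\,d\Q$ uniformly on compacts, $\P_0$-almost surely.

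Next I would invoke the joint invariance principle stated after Theorem~\ref{t_KiVa_h-1}: under $\P_0$, the pair $(A^\eps_f,M^\eps)$ converges in law in $C([0,\infty),\R^2)$ to a two-dimensional Brownian motion $(B_1,B_2)$ whose per-unit-time covariance matrix is
$$
\begin{pmatrix} \Sigma(f) & \Gamma_M(f) \\ \Gamma_M(f) & \int_\O m\,d\Q \end{pmatrix}.
$$
By the continuous mapping theorem, $(A^\eps_f|_{[0,T]},Z^\eps(T))$ converges in law to $(B_1|_{[0,T]},Z(T))$ with $Z(T)=\exp\bigl(\sqrt{\a}\,B_2(T)-\tfrac{\a}{2}(\int_\O m\,d\Q)\,T\bigr)$. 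Because $m$ is bounded and $\l\to 0$, Novikov's criterion yields an $\eps$-uniform $L^p(\P_0)$ bound on $Z^\eps(T)$ for every $p<\infty$; together with $\E_0[Z^\eps(T)]=1=\E[Z(T)]$, this gives uniform integrability, so that for any bounded continuous $\Phi$ on $C([0,T],\R)$
$$
\E_0^\l\bigl[\Phi(A^\eps_f)\bigr]=\E_0\bigl[\Phi(A^\eps_f)\,Z^\eps(T)\bigr]\longrightarrow \E\bigl[\Phi(B_1)\,Z(T)\bigr].
$$

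To identify the right-hand side, I would decompose $B_1=\rho B_2+\widetilde B$ with $\rho=\Gamma_M(f)/\int_\O m\,d\Q$, where $\widetilde B$ is a Brownian motion independent of $B_2$ of variance $\Sigma(f)-\Gamma_M(f)^2/\int_\O m\,d\Q$ per unit time. Girsanov applied to $B_2$ under the density $Z(T)$ turns it into a Brownian motion with drift $\sqrt{\a}\int_\O m\,d\Q$, leaves $\widetilde B$ unchanged, and therefore turns $B_1$ into a Brownian motion with drift $\sqrt{\a}\,\Gamma_M(f)$ and variance $\Sigma(f)$, which is exactly the claim. The main technical point I anticipate is the $\eps$-uniform integrability of $Z^\eps(T)$ that is needed to pass from distributional to $L^1$ convergence of $\Phi(A^\eps_f)Z^\eps(T)$; this is where both the $L^\infty$ bound on $m$ and the vanishing of $\l$ enter. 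Tightness in path space of $(A^\eps_f,M^\eps)$ under $\P_0$ is supplied by Lemma~\ref{l_doob} and Doob's inequality applied to $M^\eps$; tightness under $\P_0^\l$ then follows from the $L^p$-bounds on $Z^\eps(T)$ combined with Cauchy--Schwarz.
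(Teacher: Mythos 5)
Your proposal is correct and follows essentially the same route as the paper: rewrite $\E_0^\l[\Phi(A_f^\eps)]$ via the Girsanov density, apply the joint invariance principle for $(A_f^\eps,\l M(\cdot/\eps^2))$ together with the ergodic theorem for the bracket, and use the $L^\infty$ bound on $m$ to get the uniform integrability needed to pass to the limit inside the expectation. The only (cosmetic) difference is at the final identification, where the paper invokes Gaussian integration by parts while you perform the equivalent computation by decomposing $B_1$ into its projection on $B_2$ plus an independent part and applying Cameron--Martin.
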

The statement of this theorem remains valid in the multi dimensional case. Namely, let $f_1,\ldots, f_n$ belong to $H^{-1}(\Omega)$,
and let $M_1,\ldots, M_k$ be continuous square integrable martingale additive functionals. Let $M_j^\eps(t)=\eps M_j(t/\eps^2)$, $j=1,\ldots,k$.
Then, as $\eps\to 0$, $\lambda\to 0$ and $\lambda^2/\eps^2\to\alpha$, under $\mathbb P_0^\lambda$ the rescaled family $(A^\eps_{f_1},\dots, A^\eps_{f_n},M^\eps_1,\ldots, M^\eps_k)$ converges in law in $C([0,\infty),\mathbb R^{n+k})$ to a Brownian motion with constant drift.  The limit covariance of  $A_{f_i}$ and $A_{f_j}$  is
$\Sigma(f_i,f_j)$; the limit covariance of   $A_{f_i}$ and $M_j$ is $\Gamma_{M_j}(f_i)$.
\begin{proof}
The arguments below are essentially borrowed from \cite{kn:LR}. Let $F$ be a continuous bounded functional on path space on time interval $[0,T]$, $T>0$.
Then we have
\begin{equation}\label{flr}
\mathbb E_0^\lambda\big[F(A_f^\eps(t)\,;\,t\in[0,T])\big]
=\mathbb E_0\big[F(A_f^\eps(t)\,;\,t\in[0,T]) e^{\lambda M(T/\eps^2)-(\lambda^2/2)\langle M\rangle(T/\eps^2)}\big].
\end{equation}
By Theorem \ref{t_KiVa_h-1} and since $\lambda^2/\eps^2$ tends to $\alpha$, under $\mathbb P_0$ the law of $(A_f^\eps\,,\,
\lambda M(\cdot/\eps^2))$ converges to the law of a two-dimensional Brownian motion $\check Z=(\check Z_1,\check Z_2)$ defined on a probability space
 $(\mathcal{W}, \mathcal{F}, \mathcal{P})$. Let $\mathcal E$ denote integration with respect to $\mathcal P$.

Let $\Sigma_2=\{(\Sigma_2)_{ij}\}_{i,j=1}^2$ be the covariance matrix of $\check Z$. It follows from the definitions that $(\Sigma_2)_{11}=\Sigma(f)$ , and
 $(\Sigma_2)_{12}=(\Sigma_2)_{21}=\sqrt{\alpha}\Gamma_M(f)$.
 Notice also that
 $\mathcal{E}[(\check Z_2(T))^2]
 =\alpha\mathbb E_0[\langle M\rangle(T)]=\alpha\mathbb E_0[\langle M\rangle(1)]T$.
 By the ergodic theorem, the process $\lambda^2
 \langle M\rangle(\cdot/\eps^2)$ converges in probability under $\mathbb P_0$ to the deterministic process $(\alpha^2\mathbb E [\langle M\rangle(1)]t\,;\,t\geq0)$.\\
 Therefore, the triple $(A_f^\eps\,,\,\lambda M(\cdot/\eps^2)\,,\, \langle M\rangle(\cdot/\eps^2) )$
 converges in law under $\mathbb P_0$ towards the process $\big((\check Z(t), \alpha^2\mathbb E [\langle M\rangle(1)]t)\,;\,t\geq0\big)$.

Besides, under the assumption that $m\in L^\infty(\Omega)$, we can estimate
$$
\mathbb E_0\left[e^{2\lambda M(T/\eps^2)}\right]=\mathbb E_0\left[e^{2\lambda M(T/\eps^2)-2\lambda^2\langle M\rangle(T/\eps^2)}
e^{2\lambda^2\langle M\rangle(T/\eps^2)}\right]$$
$$\leq e^{2\alpha\|m\|_{L^\infty(\Omega)}}.
$$
Therefore, we can pass to the limit in  (\ref{flr}), and  the right-hand side converges to\\
$\mathcal{E}\big[F(\check Z_1(t)\,,\,t\in[0,T])e^{\check
Z_2(T)-\mathcal{E}[(\check Z_2(T))^2]}\big] $.
The Gaussian integration by parts formula yields
$$
\mathcal{E}\big[F(\check Z_1(t)\,,\,t\in[0,T])e^{\check
Z_2(T)-\mathcal{E}[(\check Z_2(T))^2]}\big] = \mathcal{E}\big[F(\check Z_1(t)+\sqrt{\alpha}\Gamma_M(f)t\,,\,t\in[0,T])\big].
$$

The extension to the multidimensional case described in the comment that follows the Theorem  is an immediate consequence of the joint
invariance principle stated just after Theorem \ref{t_KiVa_h-1}.
\end{proof}

\subsection{Diffusions in a random environment}
\label{ss_appl}

In this section we apply the above results to the case of a diffusion  in random environment.
We choose for $\Omega$ the space of smooth $d\times d$ symmetric matrix functions defined on $\mathbb R^d$.
We equip this space with the topology of uniform convergence on compact subsets of $\mathbb R^d$. Besides,
$\mathbb R^d$ acts on $\Omega$ by additive translations.

Let $\mathbb Q$ be a stationary ergodic measure on $\Omega$ so {\bf Assumption 1} holds. Choose $\sigma$ satisfying {\bf Assumption 2}.
We define $\sigma^\omega(x)=\sigma(x.\omega)$ for $x\in\mathbb R^d$.   We further assume

\medskip\noindent
{\bf Assumption 3:} for any environment $\o$, the function
$x\rightarrow \sigma^\o(x)$ is smooth.

\medskip\noindent
We  introduce the notation
$$a^\o=(\sigma^\o)^2 \hbox{\, and\, } b^\o=\frac 12 \mathrm{div}a^\o\,.$$
Observe that both $a^\o$ and $b^\o$ are then stationary fields i.e.
$a^\o(x)=a(x.\o)$ and $b^\o(x)=b(x.\o)$ for some functions $a=\sigma^2$ and $b$.
It is immediate to check that $b$ belongs to $(H^{-1}(\Omega))^d$.

Let $(W_t:t\geq 0)$ be a Brownian motion defined on some probability
space $(\WW,\FF,P)$. We denote expectation with respect to $P$ by
$E$.
We define the process $X_x^\o$ as the solution of the following stochastic differential equation
\beqn \label{eq:sde1}
dX_x^\o(t)=b^\o(X_x^\o(t))\,dt+\sigma^\o(X_x^\o(t))\,dW_t \,;\,
X_x^\o(0)=x\,. \eeqn
Then $X^\o$ is a Markov process
generated by the operator
\beqn \label{eq:gen} \LL^\o f(x)=\frac 1 2\, \mathrm{div}(\,a^\o\,\nabla f)(x)\,.
\eeqn
Define the process $\o(t)=X_0^\o(t).\o$.
\begin{prop}\label{p_dom}
Under $P$,  the process $\omega(\cdot)$ is a symmetric Hunt process with reversible measure
 $\mathbb Q$ and Dirichlet form $(\mathfrak{E},\mathcal{D})$ in $L^2(\Omega, \mathbb Q)$.
\end{prop}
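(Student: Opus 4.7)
The plan is to verify each of the three assertions --- the Markov/Hunt property with continuous paths, reversibility of $\mathbb{Q}$, and the identification of the Dirichlet form --- by pulling back the dynamics of the SDE \eqref{eq:sde1} to the environment space $\Omega$. First I would work on a dense set of test functions $u$ that are smooth in the translation direction (for example, functions obtained by mollifying elements of $\mathcal{D}$ along the action), so that $u^\omega(x) = u(x.\omega)$ is genuinely $C^2$ in $x$ with $\partial_i u^\omega(x) = (D_i u)(x.\omega)$. Applying It\^o's formula to $u^\omega(X^\omega_0(t))$ and noting that $\mathcal{L}^\omega u^\omega(x) = \tfrac{1}{2}\operatorname{div}(a^\omega \nabla u^\omega)(x)$ evaluates to $(Lu)(x.\omega)$ with $Lu = \tfrac{1}{2} D\cdot(a(\omega)\, Du)$, I obtain
$$
u(\omega(t)) - u(\omega(0)) = \int_0^t (Lu)(\omega(s))\,ds + N_t,
$$
where $N_t$ is a continuous $P$-martingale of bracket $\int_0^t |\sigma Du|^2(\omega(s))\,ds$. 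This identifies $L$ as the generator of $\omega(\cdot)$ on the test functions.

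Next I would establish $\mathbb{Q}$-symmetry of $L$. Since the translation group preserves $\mathbb{Q}$, its infinitesimal generators $D_i$ are antisymmetric on $\mathcal{D}$; performing the formal integration by parts twice yields
$$
\int_\Omega u \,(Lv)\, d\mathbb{Q} = -\tfrac{1}{2}\int_\Omega Du \cdot a\, Dv\, d\mathbb{Q} = -\mathcal{E}(u,v)
\qquad (u,v \in \mathcal{D}),
$$
which is symmetric in $u, v$. In particular, $\int_\Omega Lu\, d\mathbb{Q} = 0$, so $\mathbb{Q}$ is invariant, and the symmetry shows it is reversible; moreover the associated quadratic form is exactly $(\mathcal{E},\mathcal{D})$.

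Third I would verify that $\omega(\cdot)$ is a Hunt process with continuous paths. Continuity is automatic: $s \mapsto X^\omega_0(s)$ is continuous in $\mathbb{R}^d$ and $(x,\omega) \mapsto x.\omega$ is continuous in $x$ uniformly on compacts of $\Omega$. The Markov property rests on the cocycle identity $X^\omega_0(s+t) = X^\omega_0(s) + X^{\omega(s)}_0(t)$, where the right-hand diffusion is driven by the shifted Brownian motion $(W_{s+\cdot} - W_s)$ that is independent of $\mathcal{F}_s$; conditioning on $\mathcal{F}_s$ and averaging defines a measurable semigroup $P_t \phi(\omega) = E[\phi(\omega(t))]$ on bounded measurable $\phi$. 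The strong Markov property of the driving SDE and measurable selection arguments upgrade this to the strong Markov property of $\omega(\cdot)$.

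The main obstacle is matching the Markov process thus constructed with the Hunt process postulated earlier from the Dirichlet form $(\mathcal{E},\mathcal{D})$. For this I would show that the set of smooth test functions used above is a core for (the Friedrichs extension of) $L$ in $L^2(\mathbb{Q})$, so that the two $L^2$-semigroups agree on a dense subset and hence coincide. The uniqueness of the Hunt process associated with a regular symmetric Dirichlet form --- together with the fact that both processes have continuous paths and the same transition semigroup --- then identifies $\omega(\cdot)$ with the postulated Hunt process, completing the proof.
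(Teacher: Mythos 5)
You take a genuinely different route from the paper. The paper argues entirely through the quenched process: for each fixed $\omega$ the diffusion $X^\omega_x$ is symmetric with respect to Lebesgue measure and its Dirichlet form has the maximal domain $H^1(\mathbb{R}^d)$ (uniform ellipticity), realizations $F^\omega(x)=F(x.\omega)$ of $F\in\mathcal{D}$ lie in $H^1_{\mathrm{loc}}(\mathbb{R}^d)$, and stationarity of $\mathbb{Q}$ transfers both reversibility and the form to the environment process. You instead do It\^o calculus directly on $\Omega$: you identify $L=\tfrac12 D\cdot(a D)$ on a test class $\mathcal{C}$ of functions smooth along the action, obtain symmetry from the antisymmetry of the $D_i$, and build the Markov structure from the cocycle identity. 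This is more self-contained (no appeal to the $H^1_{\mathrm{loc}}$ property of realizations) and yields the martingale decomposition of $u(\omega(t))$ as a useful by-product; the paper's transfer argument, by contrast, gets the identification of the domain $\mathcal{D}$ essentially for free from the quenched maximal domain.

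The step that does not close as written is the last one. Knowing that $\mathcal{C}$ is a core for the Friedrichs extension $A$ of $L|_{\mathcal{C}}$, and that the $L^2(\mathbb{Q})$-generator $B$ of the semigroup of $\omega(\cdot)$ agrees with $A$ on $\mathcal{C}$, does not imply $A=B$: a symmetric operator that is not essentially self-adjoint has many self-adjoint extensions, and agreement on a core of one of them tells you nothing about the domain of the other, so you have not shown that the two semigroups ``agree on a dense subset.'' The real content of the proposition is precisely the domain identification. Your mollification remark does give the easy half --- $\mathcal{C}$ is $\mathcal{E}_1$-dense in $\mathcal{D}$, so the form closure of $L|_{\mathcal{C}}$ is $(\mathcal{E},\mathcal{D})$ --- but to conclude you must also show that the form domain of $B$ is contained in $\mathcal{D}$ with the form equal to $\mathcal{E}$ there. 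The cheapest way is the paper's: evaluate $\lim_{t\downarrow0}t^{-1}\langle u-T_tu,u\rangle_{L^2(\mathbb{Q})}$ by conditioning on $\omega$ and using that the quenched form is comparable to the standard $H^1(\mathbb{R}^d)$ form, so that finiteness of the limit forces $Du\in(L^2(\Omega))^d$ and the limit equals $\mathcal{E}(u)$. Alternatively, prove Markov uniqueness (or essential self-adjointness) for $L|_{\mathcal{C}}$. Either way, some quenched-to-annealed input is needed exactly where your sketch invokes a core argument.
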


\begin{proof}
It is clear that $\omega(\cdot)$ is a Hunt process with continuous paths. Since the generator $\mathcal{L}^\omega$  is symmetric, the Lebesgue measure
is reversible for the process $X^\omega_x$ for all $\omega$. This combined with the fact that $\mathbb Q$ is stationary implies that
the measure $\mathbb Q$ is reversible for the process  $\omega(\cdot)$.\\
Now we identify the Dirichlet form of $\omega(\cdot)$.
For a given $\omega$ the domain of the Dirichlet form of the process $X^\omega_x$ is $H^1(\mathbb R^d)$.
Let $F\in\mathcal{D}$. For $\omega\in\Omega$ we define $F^\omega(x)=F(x.\omega)$. Then for almost all $\omega$ the function
$F^\omega(\cdot)$ belongs to $H^1_{\rm loc}(\mathbb R^d)$ (see \cite[page 232]{kn:JKO}).
 From these two facts the desired statement follows.
\end{proof}

\bigskip
According to Proposition \ref{p_dom} we are in the framework of this Section.
Therefore, we set $\mathbb P_0(A)=\int_\Omega d\mathbb Q(\omega)P(X_0^\omega(\cdot).\omega\in A)$ for all measurable sets $A$ in the path space.

\begin{rmk}{\rm
One can retrieve the trajectory of $X^\omega_0$ from the trajectory $\omega(\cdot)$ looking for $x\in\mathbb R^d$ that solves the equation
\begin{equation}\label{enlarging}
x.\omega=\omega(t).
\end{equation}
If this equation has a unique solution $x$, then $X_0^\omega(t)=x$, and it follows from the structure of  equation  (\ref{enlarging})
that  $X^{\omega}_0$ is an additive functional of the process $\omega(\cdot)$.   Furthermore,
enlarging the space $\Omega$ if necessary, we may always assume that  equation (\ref{enlarging}) has a unique solution.
For instance, let $(V_1,\,\ldots, V_d)$ be independent nonconstant random fields with finite range of correlation indexed
by $\mathbb R$ and defined on some probability space $\Omega'=\Omega_1\times\ldots\times\Omega_d$. We assume that each $\Omega_j$
is equipped with a measure preserving ergodic action of $\mathbb R$. For $\omega'=(\omega_1,\ldots,\omega_d)$ and $x\in\mathbb R^d$ we define
$x.\omega'= (x_1.\omega_1,\ldots,x_d.\omega_d)$, and let $V^{\omega'}(x)=V(x.\omega')$.   We enlarge $\Omega$ by taking the product space $\Omega\times\Omega'$.
Observe that if the equation  (\ref{enlarging}) has two different solutions then one of the components of $V^\omega$ is periodic, and this happens with probability $0$.

A similar argument is used in  \cite[Remark 4.2]{kn:DFGW}.
}
\end{rmk}

The martingale part of $X^\omega_0$, that can be expressed  as $\int_0^\cdot \sigma(\omega(s))\,dW_s$, is
a martingale additive functional of the process $\omega(\cdot)$. The drift part is also an additive functional of the form $\int_0^\cdot b(\omega(s))\,ds$ with
$b\in (H^{-1}(\Omega))^d$.   Therefore, Theorem \ref{t_KiVa_h-1} and the comment following this theorem imply the joint invariance principle for these
processes. As a consequence, the family of processes $\big(\eps X_0^\omega(t/\eps^2)\,;\,t\geq 0\big)$ converges in law under $P\times\mathbb Q$, as $\eps\to0$, towards a Brownian
motion with the {\em effective covariance} that we denote by $\Sigma$, and
$$
e\cdot\Sigma e=\lim\limits_{t\to\infty}\ \frac{1}{t}\int_\Omega E[(X_0^\omega(t)\cdot e)^2]\,d\mathbb Q(\omega),\qquad \hbox{for any }e\in\mathbb R^d.
$$

In the sequel we often use the notion of  symmetric and antisymmetric additive functionals of $\omega(\cdot)$.
For $T>0$ the {\em time reversal operator} $R_T$ maps a trajectory $(\omega(t)\,;\,0\leq t\leq T)$ to the trajectory
$(\omega(T-t)\,;\,0\leq t\leq T)$.
An additive functional  is called {\em symmetric}  with respect to time reversal if its restriction to the time interval $[0,T]$ is invariant
under $R_T$ for all $T$.  It is called {\em antisymmetric} if it changes sign upon the action of $R_T$.
For instance, $A_f$ is a symmetric additive functional whereas $X_0^\omega$ is antisymmetric.

Let $e_1$ be a non-zero vector and $\lambda>0$. We define $\hat\lambda$
to be the vector ${\hat\lambda}=\lambda e_1$.
We consider the perturbed stochastic differential equation:
\beqn \label{eq:sde2}
dX_x^{\l,\,\o}(t)=b^\o(X_x^{\l,\,\o}(t))\,dt+
\,a^\o(X_x^{\l,\,\o}(t))\hl\,dt+\sigma^\o(X_x^{\l,\,\o}(t))\,dW_t \,;\, X_x^{\l,\,\o}(0)=x\,.
\eeqn
Then $X^{\lambda,\omega}$ is a Markov process with generator
$$
\mathcal{L}^{\lambda,\omega}f(x)=\mathcal{L}^\omega f(x)+a^\omega(x)\hat\lambda\cdot\nabla f(x).
$$
Applying the Girsanov formula (see \cite{kn:RYor} ) to the processes $X^\o$ and
$X^{\l,\,\o}$, we get that, for any $\o$, \beqn\label{eq:gir}
E[F(X_0^{\l,\,\o}([0,t]))]=E[F(X_0^{\o}([0,t]))\, e^{\l\bB (t)
-\frac {\l^2}2 \langle\bB\rangle(t)}]\,, \eeqn where $\bB$ is the
martingale
\begin{equation}\label{mart_part}
 \bB (t)=\int_0^t \sigma^\o(X_0^\o(s))\, e_1 \cdot dW_s
\end{equation}
and $\langle\bB\rangle$ is its bracket
$$\langle\bB\rangle(t)=\int_0^t \vert \sigma^\o(X_0^\o(s))\, e_1 \vert^2 \, ds \,.$$
Observe that the process $\bar B$ is an additive functional of $\omega(\cdot)$ which can be written as
\begin{equation}\label{b-bar}
 \begin{array}{c}\displaystyle
\bB(t)-\bB(s)=\tr{e_1}\, (X_0^\o(t)-X_0^\o(s))-\int_s^t \tr{e_1}\, b^\o(X_0^\o(u))\,du\\[2mm] \displaystyle
= \tr{e_1}\, (X_0^\o(t)-X_0^\o(s))-\int_s^t \tr{e_1}\, b(\o(u))\,du\,.
\end{array}
\end{equation}
We let $\omega^\lambda(t)=X_0^{\lambda,\o}(t).\o$.
Then
the law of the process $\omega^\lambda(\cdot)$ with the initial measure   $\mathbb Q$ coincides with the measure $\mathbb P^\lambda_0$
defined in Section \ref{s_leb-rost}, where we set $M=\overline B$.
 Let $\bar\Gamma$ be the covariance operator defined in Section \ref{s_leb-rost} with $M=\overline B$.

\begin{prop}\label{p_cov}
Let $f\in H^{-1}(\Omega)$, Then, under $P\times\mathbb Q$, the processes
$$
\Big(\lambda X_0^{\lambda,\omega}(\lambda^{-2}\cdot),\, \lambda \int_0^{\lambda^{-2}\cdot} f(\omega^\lambda(s))\,ds\Big)
$$
converge in law in $C([0,\infty), \mathbb R^{d+1})$, as $\lambda$ tends to $0$,
towards a Brownian motion with constant drift.  The limit covariance matrix and the drift are given, respectively, by
$$
\left(
\begin{array}{ccc}
\Sigma &0\\[2mm]
0&\Sigma(f)
\end{array}
\right)\,,\qquad
\left(
\begin{array}{c}
\Sigma e_1\\[2mm]
\overline{\Gamma}(f)
\end{array}
\right)\,.
$$
\end{prop}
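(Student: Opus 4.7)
The plan is to derive Proposition \ref{p_cov} as a direct application of the multi-dimensional extension of Theorem \ref{t_LR}. The starting point is that under $P\times\Q$ the law of $\omega^\lambda(\cdot)$ coincides with the Lebowitz--Rost measure $\P_0^\lambda$ of Section \ref{s_leb-rost} with $M=\bB$. Extending (\ref{b-bar}) to all coordinates yields the path-wise decomposition
\[
X_0^\omega(t)\cdot e_j \;=\; A_{e_j\cdot b}(t)+\bB^{e_j}(t),
\]
where $e_j\cdot b\in H^{-1}(\O)$ and $\bB^{e_j}$ is a continuous square-integrable $\P_0$-martingale additive functional with bracket $\int_0^t e_j\cdot a(\omega(s))e_j\,ds$. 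Evaluating this identity along $\omega^\lambda$ recovers $X_0^{\lambda,\omega}$.

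Next I would apply the multi-dimensional extension of Theorem \ref{t_LR} (stated immediately after its proof) with the functions $(f,e_1\cdot b,\ldots,e_d\cdot b)\subset H^{-1}(\O)$ and the martingales $(\bB^{e_1},\ldots,\bB^{e_d})$, choosing $\eps=\lambda$ so that $\alpha=\lambda^2/\eps^2=1$. Summing the $A_{e_j\cdot b}$ and $\bB^{e_j}$ contributions then gives the joint convergence of the rescaled pair $(\lambda X_0^{\lambda,\omega}(\lambda^{-2}\cdot),\lambda A_f(\lambda^{-2}\cdot))$ to a Brownian motion with constant drift.

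To identify the limit covariance, I would use the unperturbed joint invariance principle of Section \ref{ss_appl}: the $X$-block has covariance $\Sigma$ and the $f$-block has variance $\Sigma(f)$. The cross-block vanishes by reversibility of $\P_0$: under the time-reversal operator $R_T$ the relation $x.\omega(0)=\omega(T)$ inverts, so $X_0^\omega(T)\circ R_T=-X_0^\omega(T)$, while $A_f(T)\circ R_T=A_f(T)$. Reversibility then yields $\E_0[X_0^\omega(T)A_f(T)]=-\E_0[X_0^\omega(T)A_f(T)]=0$, so the asymptotic cross-covariance vanishes (extending from $L^2_0\cap H^{-1}(\O)$ to all of $H^{-1}(\O)$ by continuity using Lemma \ref{l_doob}).

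For the drift, Theorem \ref{t_LR} directly gives $\bar\Gamma(f)$ for the $A_f$-component. The drift of the $X$-component is the Lebowitz--Rost correction $\sqrt{\alpha}\,\lim_{T\to\infty}\tfrac{1}{T}\E_0[X_0^\omega(T)\bB(T)]$; substituting $\bB=X_0^\omega\cdot e_1-A_{e_1\cdot b}$ from (\ref{b-bar}) and using the invariance principle together with the reversibility cancellation $\E_0[X_0^\omega(T)A_{e_1\cdot b}(T)]=0$ gives $\Sigma e_1$. The main obstacle is the bookkeeping of the multi-dimensional application: one has to check that summing the $\bar\Gamma(e_j\cdot b)$-drift of $A_{e_j\cdot b}^\eps$ with the Girsanov-induced drift $\sqrt{\alpha}\,\E_0[\langle\bB^{e_j},\bB\rangle(1)]=(\int a\,d\Q)_{j1}$ of $\bB^{e_j}$ under $\P_0^\lambda$ produces the same value $(\Sigma e_1)_j$, a consistency check that ultimately reduces to the same reversibility identity applied within the formula for the effective variance.
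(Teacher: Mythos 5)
Your proposal is correct and follows essentially the same route as the paper: apply the multidimensional version of Theorem \ref{t_LR} with $M=\bar B$ and $\alpha=1$, identify the covariance block-by-block via the unperturbed joint invariance principle with the cross terms killed by the symmetric/antisymmetric time-reversal argument, and read off the drift as the limiting covariance with $\bar B$, using (\ref{b-bar}) to reduce the $X$-component to $\Sigma e_1$. The extra consistency check you mention at the end is not needed but is indeed valid and reduces to the same reversibility identity.
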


\begin{proof}
Theorem \ref{t_LR} and the comment following this theorem apply and yield the convergence in law of $\big(\lambda{X}_0^{\lambda,\omega}(\lambda^{-2}\cdot), \lambda{A}^{\lambda,\omega}_{0,f}(\lambda^{-2}\cdot)\big)$, under the annealed measure $P\times\mathbb Q$.

According to Theorem \ref{t_LR} the limit covariance matrix is also the limit covariance matrix under the annealed measure  of
\begin{equation}\label{no_lamb}
\Big(\frac{1}{\sqrt{t}}X_0^\omega(t),\,  \frac{1}{\sqrt{t}} \int_0^t f(\omega(s)\,ds\Big),
\end{equation}
as $t\to\infty$.  By definition of $\Sigma$, the covariance of the $X_0^\omega$ component converges to $\Sigma$, while the limit variance of the last
component is $\Sigma(f)$.  The covariance of $X_0^\omega(t)$  and  $\int_0^t f(\omega(s)\,ds$ vanishes because $X_0^\omega(\cdot)$ is an antisymmetric with respect to time reversal additive functional of $\omega(\cdot)$, and   $\int_0^t f(\omega(s)\,ds$ is symmetric.

As for the limit drift part, Theorem \ref{t_LR} implies that  it is given by the limit of the covariances of the vector in (\ref{no_lamb}) and $t^{-1/2}\overline{B}(t)$.  The contribution of the last
component is $\overline{\Gamma}(f)$ by definition.   To identify the contribution of the $X_0^\omega$ component we rely on formula (\ref{b-bar}) observing once again that the covariance of $X_0^\omega(t)$ and $\int_0^t e_1\cdot b(\omega(s))\,ds$ vanishes for symmetry reasons.
\end{proof}

\subsubsection{The corrector}
\label{sss_corr}

We recall that $b\in (H^{-1}(\Omega))^d$. Let $\widetilde b$ be the matrix whose columns are elements of $L^2_{\rm pot}(\Omega)$ such that
$\widetilde b e=\widetilde{b\cdot e}$ for any $e\in\mathbb R^d$. Let $\widetilde b^\omega(x)=\widetilde b(x.\omega)$ be the space realization of $\widetilde b$.  For any $e\in\mathbb R^d$ for almost all  $\omega\in\Omega$ then $\widetilde b^\omega\cdot e$ is a curl-free  function in $L^2_{\rm loc}(\mathbb R^d)$. Therefore, there exists a smooth vector valued function $\chi(\cdot,\omega)$ defined on $\mathbb R^d$ and such that  $\nabla (\chi(\cdot,\omega)\cdot e)=\widetilde b^\omega\cdot e$. The function $\chi$ is called a {\it corrector}. Observe that it is uniquely defined up to an additive constant.   By the definition of $\widetilde b$,
$$
\int_\Omega (b\cdot e)u\,d\mathbb Q=\frac{1}{2}\int_\Omega\sigma(\widetilde b\cdot e)\cdot \sigma\nabla u\,d\mathbb Q, \quad\hbox{for any }
u\in \mathcal{D}_0.
$$
Going to the space of realizations yields
$$
\int_{\mathbb R^d} b^\omega(x)\!\cdot \!e\, u(x)\,dx=\frac{1}{2}\int_{\mathbb R^d}\sigma^\omega(x)\nabla(\chi(x,\omega)\cdot e)\cdot
\sigma^\omega(x)\nabla u(x)\,dx
$$
for any $u\in C_0^\infty(\mathbb R^d)$. Integrating by parts we obtain
$$
\int_{\mathbb R^d} b^\omega(x)\!\cdot \!e\, u(x)\,dx=-\int_{\mathbb R^d}\mathcal{L}^\omega\chi(x,\omega)\!\cdot\! e\, u(x)\,dx.
 $$
Thus, $\mathcal{L}^\omega\chi\cdot e=-b^\omega\cdot e$.  This implies that for almost all $\omega\in\Omega$ the process
$\chi(X_x^\omega(t),\omega)+X_x^\omega(t)$ is a martingale under $P$ for all starting points $x$.

The following Proposition illustrates the role of the corrector. However, it will not be used in the sequel.

\begin{prop}\label{p_gamma}
Let $f\in H^{-1}(\Omega)\cap L^2(\Omega)$. Then
$$
\mathbb E_0 \Big[\frac{1}{t}\int_0^t f(X_0^\omega(s).\omega)\,\chi(X_0^\omega(s),\omega)\cdot e_1ds\Big]
\longrightarrow -\frac{1}{2}\bar\Gamma(f),\qquad\hbox{as }t\to\infty.
$$

\end{prop}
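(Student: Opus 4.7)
The plan is to write $\chi(X_0^\omega(s),\omega)\cdot e_1$ as a sum of additive functionals of $\omega(\cdot)$ via Itô's formula, compute the resulting integrals in closed form using the reversibility of $\mathbb{P}_0$, and finally recognise each term as a limiting covariance supplied by the invariance principle of Theorem \ref{t_KiVa_h-1}. The crux is that although $\chi(x,\omega)$ is not stationary in $\omega$, only its gradient is, it still admits a convenient decomposition along trajectories.

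Set $\psi^\omega(x)=\chi(x,\omega)\cdot e_1$ and, using the gauge freedom in $\chi$, normalise so that $\psi(0,\omega)=0$. Since $\mathcal L^\omega\psi^\omega=-b^\omega\cdot e_1$, Itô's formula applied to $\psi^\omega(X_0^\omega(s))$ yields
$$
\psi^\omega\bigl(X_0^\omega(s)\bigr)=M^\chi(s)-A_{b\cdot e_1}(s),
$$
where $M^\chi(s)=\int_0^s\sigma(\omega(u))\widetilde{(b\cdot e_1)}(\omega(u))\cdot dW_u$ is a square integrable martingale additive functional of $\omega(\cdot)$ with bracket $\int_0^s|\sigma\widetilde{(b\cdot e_1)}|^2(\omega(u))\,du$. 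Insert this into the integral and split into two pieces. For the martingale piece, Itô's product rule applied to $A_f(s)M^\chi(s)$, together with the bounded variation of $A_f$ and the martingale property, gives
$$
\mathbb E_0\Bigl[\int_0^t f(\omega(s))\,M^\chi(s)\,ds\Bigr]=\mathbb E_0\bigl[A_f(t)\,M^\chi(t)\bigr].
$$
For the bounded variation piece, decompose $A_f(t)A_{b\cdot e_1}(t)$ by Fubini over $\{s>u\}\cup\{s<u\}$ and use the invariance of $\mathbb P_0$ under the time reversal $r_t$, under which $A_f$ is symmetric, to obtain
$$
\mathbb E_0\Bigl[\int_0^t f(\omega(s))\,A_{b\cdot e_1}(s)\,ds\Bigr]=\tfrac12\,\mathbb E_0\bigl[A_f(t)\,A_{b\cdot e_1}(t)\bigr].
$$

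Now pass to the limit. By Theorem \ref{t_KiVa_h-1} and (\ref{cro_co}), $\frac1t\mathbb E_0[A_f(t)A_{b\cdot e_1}(t)]\to\Sigma(f,b\cdot e_1)$. For the cross term involving the martingale $M^\chi$, insert the forward–backward representation (\ref{ma_fb}) for $A_f$; using that $M^\chi$ is (up to sign) precisely the Kipnis–Varadhan martingale associated to $b\cdot e_1$, the symmetry $A_f\circ r_t=A_f$ and the invariance of $\mathbb P_0$ under $r_t$ yield
$$
\lim_{t\to\infty}\frac1t\,\mathbb E_0\bigl[A_f(t)\,M^\chi(t)\bigr]=\Sigma(f,b\cdot e_1).
$$
Combining the two contributions,
$$
\lim_{t\to\infty}\frac1t\,\mathbb E_0\Bigl[\int_0^t f(\omega(s))\,\psi^\omega(X_0^\omega(s))\,ds\Bigr]=\Sigma(f,b\cdot e_1)-\tfrac12\Sigma(f,b\cdot e_1)=\tfrac12\Sigma(f,b\cdot e_1).
$$

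Finally, identify $\Sigma(f,b\cdot e_1)$ with $-\bar\Gamma(f)$. From (\ref{b-bar}), $\bar B(t)=e_1\cdot X_0^\omega(t)-A_{b\cdot e_1}(t)$, so
$$
\mathbb E_0\bigl[A_f(t)\bar B(t)\bigr]=\mathbb E_0\bigl[A_f(t)\,e_1\cdot X_0^\omega(t)\bigr]-\mathbb E_0\bigl[A_f(t)\,A_{b\cdot e_1}(t)\bigr].
$$
The first expectation vanishes for every $t$: as discussed in Section \ref{ss_appl}, $A_f$ is symmetric and $X_0^\omega(t)$ is antisymmetric under $r_t$ (on the reversed path the lifted position at time $t$ equals $-X_0^\omega(t)$), while $\mathbb P_0$ is $r_t$-invariant. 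Dividing by $t$ and letting $t\to\infty$ therefore gives $\bar\Gamma(f)=-\Sigma(f,b\cdot e_1)$, which together with the computation above yields the claim.

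The main obstacle is the handling of $\psi$: it is not a stationary observable, only its gradient is, so $\chi(X_0^\omega(s),\omega)\cdot e_1$ cannot be interpreted as an $H^{-1}$-additive functional of $\omega(\cdot)$ on its own. The Itô decomposition of Step 1 is what unlocks the calculation. A secondary technical point is justifying $\mathbb E_0[\int_0^tA_f\,dM^\chi]=0$ when $|\sigma\widetilde{(b\cdot e_1)}|^2\notin L^\infty(\mathbb Q)$; this is handled by a standard truncation argument, approximating $\widetilde{(b\cdot e_1)}$ by bounded stationary fields and using Lemma \ref{l_doob} to control $A_f$ in $L^2$ uniformly.
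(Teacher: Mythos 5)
Your proposal is correct and follows essentially the same route as the paper: the Itô/martingale decomposition $\chi(X_0^\omega(s),\omega)\cdot e_1=M^\chi(s)-A_{b\cdot e_1}(s)$ is exactly the paper's martingale $m_s$, the martingale property converts the first integral into $\mathbb E_0[A_f(t)M^\chi(t)]$, the time-reversal symmetry yields the factor $\tfrac12$ on the second, and the identification $\bar\Gamma(f)=-\Sigma(f,b\cdot e_1)$ is the paper's Lemma \ref{l_cova}. The only cosmetic difference is that you gauge $\chi(0,\omega)=0$, whereas the paper keeps that term and kills it with the ergodic theorem.
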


\begin{proof}
The proposition relies on the following statement. Recall that
 $\Sigma(f,g)$ is defined in (\ref{dsig}), see also \eqref{cro_co}.
\begin{lm}\label{l_cova}
 We have
 $$
\bar \Gamma(f)= -\Sigma(f, b\cdot e_1)\qquad \hbox{for all }f\in H^{-1}(\Omega).
 $$
\end{lm}

\begin{proof}[Proof of Lemma \ref{l_cova}]
By definition,
$$
\bar\Gamma(f)=\lim\limits_{t\to\infty}\frac{1}{t}\mathbb E_0\left[A_f(t)\overline B(t)\right]
$$
with $\overline B$ defined in \eqref{mart_part}.
Notice that
$$
\overline B(t)=(X_0^\omega(t)-X_0^\omega(0))\cdot e_1-A_{b\cdot e_1}(t).
$$
 As we already observed, $A_f$ is a symmetric additive functional and $(X_0^\omega(t)-X_0^\omega(0))$ is antisymmetric. Therefore, the covariance of $X_0^\omega(t)-X_0^\omega(0)$ and $A_f$ vanishes. Thus,
$$
\bar\Gamma(f)=-\lim\limits_{t\to\infty}\frac{1}{t}\mathbb E_0\left[A_f(t)A_{b\cdot e_1}(t)\right]= -\Sigma(f,b\cdot e_1).
$$
\end{proof}

Define
$$
m_t=\chi(X_0^\omega(t),\omega)\cdot e_1- \chi(0,\omega)\cdot e_1+ A_b(t)\cdot e_1.
$$
Then the process $\{m_t\,:\,t\geq 0\}$ is a martingale under $P$.  We have
\begin{equation}\label{aux_co10}
\begin{array}{c}
\displaystyle
\mathbb E_0\Big[\frac{1}{t}\int_0^t f(\omega(s))\chi(X_0^\omega(s),\omega)\cdot e_1\,ds
\Big] =\mathbb E_0\Big[\frac{1}{t}\int_0^t f(\omega(s))m_s\,ds
\Big]\\[4mm]
\displaystyle
 -\mathbb E_0\Big[\frac{1}{t}\int_0^t f(\omega(s))A_b(s)\cdot e_1\,ds
\Big] +\mathbb E_0\Big[\frac{1}{t}\int_0^t f(\omega(s))\chi(0,\omega)\cdot e_1\,ds
\Big].
\end{array}
\end{equation}
Using the martingale property of $m_\cdot$, we get
$$
\mathbb E_0\Big[\frac{1}{t}\int_0^t f(\omega(s))m_s\,ds\Big] =\mathbb E_0\Big[\frac{1}{t}\Big(\int_0^t f(\omega(s))\,ds\Big) m_t
\Big]
$$
$$
=\mathbb E_0\Big[\frac{1}{t} A_f(t)\big(\chi(X_0^\omega(t),\omega)-\chi(0,\omega)+A_b(t)\big)\cdot e_1\Big]
=\mathbb E_0\Big[\frac{1}{t} A_f(t)A_b(t)\cdot e_1\Big];
$$
here we have also used the fact that $A_f$ is a symmetric with respect to time reversal additive functional  and  $\chi(X_0^\omega(\cdot),\omega)-\chi(0,\omega)$ is antisymmetric. Therefore, their covariance vanishes.

By stationarity and reversibility we have  for all $s\leq v$
$$
\mathbb E_0\Big[f(\omega(s))b(\omega(v))\Big]=\mathbb E_0\Big[f(\omega(v))b(\omega(s))\Big].
$$
Therefore,
$$
\mathbb E_0\Big[\frac{1}{t}\int_0^t f(\omega(s))A_b(s)\cdot e_1\,ds\Big]
=\mathbb E_0\Big[\frac{1}{t}\int_0^t f(\omega(s))\int_0^s b(\omega(v))\cdot e_1\,dvds\Big]
$$
$$
=\mathbb E_0\Big[\frac{1}{t}\int_0^t f(\omega(s))\int_s^t b(\omega(v))\cdot e_1\,dvds\Big]
=\frac{1}{2}\mathbb E_0\Big[\frac{1}{t} A_f(t)A_b(t)\cdot e_1\Big],$$
and we conclude that
\begin{equation}\label{aux_co100}\mathbb E_0\Big[\frac{1}{t}\int_0^t f(\omega(s))\chi(X_0^\omega(s),\omega)\cdot e_1\,ds\Big] =
\frac 12 \mathbb E_0\Big[\frac{1}{t} A_f(t)A_b(t)\cdot e_1\Big]+\mathbb E_0\Big[\frac{1}{t}\int_0^t f(\omega(s))\chi(0,\omega)\cdot e_1\,ds
\Big].
\end{equation}

As $t\to\infty$,  according to Theorem \ref{t_KiVa_h-1}, the term $\mathbb E_0\Big[\frac{1}{t} A_f(t)A_b(t)\cdot e_1\Big]$
converges to $\Sigma(f,b\cdot e_1)$. By the Ergodic theorem the last term on the right-hand side of (\ref{aux_co100}) converges to zero. Thus,
$$
\mathbb E_0 \Big[\frac{1}{t}\int_0^t f(X_0^\omega(s).\omega)\,\chi(X_0^\omega(s),\omega)\cdot e_1ds\Big]
\longrightarrow\frac{1}{2}\Sigma(f,b\cdot e_1)=-\frac{1}{2}\bar\Gamma(f),\qquad\hbox{as }t\to\infty.
$$

\end{proof}

\begin{rmk}\label{r_corr}
For a function $g\in L^2(\Omega)$ by stationarity we have
$$
\int_\Omega fg\,d\mathbb Q=\frac{1}{t}\mathbb E_0\Big[\int_0^tf(\omega(s))g(\omega(s))\,ds\Big].
$$
In general, $\chi(x,\omega)$ is not of the form $g(x.\omega)$.   This suggests that the expression
$$
\mathbb E_0 \Big[\frac{1}{t}\int_0^t f(X_0^\omega(s).\omega)\,\chi(X_0^\omega(s),\omega)\cdot e_1ds\Big]
$$
need not have a limit for all $f\in L^2(\Omega)$. However, the Proposition says that the limit exists for all
$f\in H^{-1}(\Omega)$. In this respect, $-\frac{1}{2}\bar\Gamma(f)$ can be interpreted as a substitute for the integral of
a function $f$ against the corrector $\chi$.

In the case of finite range of dependence and $d\geq 3$, then the corrector exists and
$$
-\frac12\bar\Gamma(f)=\int_\O f\chi d\mathbb Q,
$$
see \cite{kn:GO} and \cite{kn:GMa}.

\end{rmk}


\section{Continuity of steady states}\label{s_sec1}

In this section, we study continuity properties of the steady state $\nu_\lambda$ as $\l$ tends to $0$.
In particular we shall prove Theorem \ref{theo:continuityIntro}.
Our main tool is Lemma \ref{lm:estimateAlambda}. It will also be useful in the other sections of the paper.
An alternative version of Lemma \ref{lm:estimateAlambda}, which also implies Theorem \ref{theo:continuityIntro},  is given in Appendix B.

\subsection{The spaces $H^{-1}_\infty(\Omega)$ and $\tilde{H}^{-1}_\infty(\Omega)$.}

Let $F$ be a vector-valued function in $\big(L^\infty(\Omega)\big)^d$.

The formula
$$
\langle F,u\rangle=-\int_\Omega F\cdot Du\,d\mathbb Q
$$
defines a linear continuous functional on $H^1(\Omega)$. Therefore there exists an element $f\in H^{-1}(\Omega)$
such that
$\langle F,u\rangle$ is the duality product $\langle f,u\rangle_{H^{-1},H^1}$. We denote $f$ by $\div F$ as it coincides with the
standard divergence if $F$ is regular enough. Indeed, if  $F=(F_1,\ldots, F_d)$ is such that $F_j$ belongs to the domain of $D_j$,
then $\langle f,u\rangle_{H^{-1},H^1}=-(F,Du)_{L^2(\Omega)}=(\sum D_jF_j,u)_{L^2(\Omega)}=(\mathrm{div}F,u)_{L^2(\Omega)}$
for any $u\in H^1(\Omega)$. The second relation here follows from the fact that
$\sqrt{-1}D_j$ is a self-adjoint operator in $L^2(\Omega)$.  We define $H^{-1}_\infty(\Omega)$ to be the set of elements $f$ in
$H^{-1}(\Omega)$ of the form $f=\div F$ for some $F$ in $\big(L^\infty(\Omega)\big)^d$. Let
$$
\Vert f\Vert_{H^{-1}_\infty(\Omega)}=\min\{\Vert F\Vert_\infty\,;\, \div F=f\}.
$$
Then $H^{-1}_\infty(\Omega)$ is a Banach space. Indeed, it is clear that $\Vert f\Vert_{H^{-1}_\infty(\Omega)}$ is a norm. We have to check that $H^{-1}_\infty(\Omega)$ is complete
with respect to this norm. To this end consider a Cauchy sequence $\{f_m\}_{m=1}^\infty$  in  $H^{-1}_\infty(\Omega)$. Taking a subsequence $\{m_j\}$ we can assume that
$\|f_{m_{j+1}}-f_{m_{j}}\|\big._{H^{-1}_\infty}\leq 2^{-(j+1)}$. Then there exist
$\widetilde F_j\in \big(L^\infty(\Omega)\big)^d $ such that $\|\widetilde F_j\|\big._{\infty}\leq 2^{-j}$ and
 $f_{m_{j+1}}-f_{m_{j}}=\mathrm{div} \widetilde F_j$.  Denote $F=F_{m_1}+\sum_{j=1}^\infty\widetilde F_j$ with $\mathrm{div} F_{m_1}=f_{m_1}$ and $F_{m_1}\in \big(L^\infty(\Omega)\big)^d$.
By construction $F\in L^\infty(\Omega)$ and thus $f:=\mathrm{div} F\in H^{-1}_\infty(\Omega)$.
One can easily check that $f_{m_j}$ converges to $f$ in $H^{-1}_\infty(\Omega)$ as $j\to\infty$, and, by the triangle inequality, $f_m$ converges to $f$ as $m$ tends to $\infty$.

Observe that, for a given $f$ in $H^{-1}_\infty(\Omega)$ there may be several $F$'s in $\big(L^2(\Omega)\big)^d$ such that
$\div F=f$. They are characterized by the fact that $a^{-1}F+\frac 12 \widetilde{f}$ is orthogonal to $L^2_{pot}(\Omega)$.

\medskip
We call a function $f$ - or more generally an element $f$ in $H^{-1}(\Omega)$ - {\it local} if there exists
$R_f$ such that $f$ is measurable with respect to the $\sigma$-field $\HH_{B_{R_f}}$ where
$B_R$ is the ball of radius $R$.

\medskip
We denote by $\tilde{H}^{-1}_\infty(\Omega)$ the closure of the set of elements $f$ in $H^{-1}_\infty(\Omega)$ for which there exists
a bounded and local $F$ such that $\div F=f$.

\subsection{Proof of Theorem \ref{theo:continuityIntro}.}

In Section \ref{s_exten}, we defined the continuous additive functional $A_f$ for $f\in H^{-1}(\Omega)$.
Since, for all $t>0$, for all $\omega$, the laws of the processes $(X_0^\omega(s);0\leq s\leq t)$ and
$(X_0^{\lambda,\omega}(s);0\leq s\leq t)$ are equivalent, the same approximation procedure as in Section \ref{s_exten}
can be used to give a meaning to the continuous additive functional
$$A^{\lambda,\omega}_{0,f}(t):=\int_0^t f(\omega^\lambda(s))\,ds,$$
for $\mathbb Q$ almost all $\omega$.

We then have the following

\begin{lm}\label{lm:estimateAlambda}
For all $p\geq 1$ there exists a constant ${\tt C}_p$ such that for all $0<\lambda\leq 1$, for all  $f\in H^{-1}_\infty(\Omega)$,
for $\mathbb Q$ almost all  $\omega$,  for each  $t\geq 1/\lambda^2$  the following estimate holds
\begin{equation}\label{eq:upH-1}
E\Big(\max\limits_{0\leq s\leq t}|A^{\lambda,\omega}_{0,f}(s)|^p\Big)\leq {\tt C}_p\lambda^p t^p \Vert f\Vert_{H^{-1}_\infty(\Omega)}^p;
\end{equation}
the constant  ${\tt C}_p$ depends only on the ellipticity constant $\varkappa$ in {\bf Assumption 2} and the dimension.
\end{lm}

\begin{proof}

Let us first observe that after multiplying $f$ by an appropriate constant, we may assume that $\Vert f\Vert_{H^{-1}_\infty(\Omega)}< 1$.
We then choose $F$ in $\big(L^\infty(\Omega)\big)^d$ such that $f=\div F$ and $\sup_\omega \vert F(\omega)\vert< 1$.

\medskip

We then consider processes taking on values in $\mathbb R^{d+1}$. We use the notation $z=(x,y)$, $x\in\mathbb R^d$ and $y\in\mathbb R$.
Let us introduce the process
$$
Z_z^{\lambda,\omega}(t)=\big(X_x^{\lambda,\omega}(t),\, y+A_{x,f}^{\lambda,\omega}(t)+W^1_t\big),
$$
where $W^1$ is an independent one-dimensional Brownian motion (which is assumed to be defined on the same probability space
$(\mathcal{W},\mathcal{F},P)$ as $W$), and
$$
A_{x,f}^{\lambda,\omega}(t)=\int_0^t f(X_x^{\lambda,\omega}(s).\omega)\,ds.
$$
Notice that $Z^{\lambda,\omega}$ is a Markov process with generator
$$\mathcal{M}^{\lambda,\omega}=
(\mathcal{L}^{\lambda,\omega})_x+\frac{1}{2}\partial_y^2+f(x.\omega)\partial_y,
$$
where, for a function $q(z)$, the operator $(\mathcal{L}^{\lambda,\omega})_x$ acts on $q$ as a function of variable $x$.

Let us check that the operator $\mathcal{M}^{\lambda,\omega}$ can be written in the form
\beqn\label{eq:reversible}
\mathcal{M}^{\lambda,\omega}q =\frac 12 \div_x(a^\omega\nabla_xq)+\lambda a^\omega e_1\nabla_xq +
\div_x(F^\omega \partial_y q)-\partial_y(F^\omega\nabla_x q)+ \frac 12 \partial^2_{y}q,
\eeqn
where as above we use the notation $F^\omega(z)=F(x.\omega)$.
Indeed, since $F^\omega$ does not depend on $y$, we have
$$
\div_x(F^\omega \partial_y q)-\partial_y(F^\omega\nabla_x q)=
(\div_x F^\omega)\partial_y q=f^\omega\partial_y q.
$$
This implies the desired representation.  In the variables $\tilde z=\lambda z$ and $\tilde t=\lambda^2 t$, the
generator reads
\begin{equation}\label{gen_z}
\frac 12 \div_{\tilde x}(a^\omega(\lambda^{-1}\tilde x)\nabla_{\tilde x}q)+ a^\omega((\lambda^{-1}\tilde x)e_1\nabla_{\tilde x}q +
\div_{\tilde x}(F^\omega(\lambda^{-1}\tilde x) \partial_{\tilde y} q)-\partial_{\tilde y}(F^\omega(\lambda^{-1}{\tilde x})\nabla_{\tilde x} q)+\frac 12  \partial^2_{{\tilde y}}q.
\end{equation}

\medskip

Note that all the coefficients of the operator in (\ref{gen_z}) are bounded.
The parabolic Aronson estimates (see \cite[Theorems 8 and 9]{kn:aronson}) therefore hold uniformly in $\lambda$ and in $\omega$ on any finite time interval and in any fixed  ball.

Denote $\widetilde T_r=\inf\{s>0\,:\, |\lambda Z^{\lambda,\omega}_0(s/\lambda^2)|=r\}$.  Applying Aronson's lower bound  to the parabolic equation with  generator given by (\ref{gen_z}), we obtain that there exists $\delta_0>0$ such that for all $\lambda\in(0,1)$ and all $\omega$ such that $\sup\vert F^\omega\vert\leq 2$, then
$$
P(\widetilde T_1\geq 1)\geq \delta_0.
$$
Therefore,
$$
E\big(e^{-\widetilde T_1}\big)\leq 1-\eps_0
$$
for some $\eps_0>0$. Applying the Markov property we deduce that
$$
E\big(e^{-\widetilde T_r}\big)\leq (1-\eps_0)^{r-1},
$$
and
$$ P({\tilde T}_r\leq t)\leq e^t (1-\eps_0)^{r-1}.$$

Let $T\leq 1$. Since the events $({\tilde T}_r\leq T)$ and $(\lambda\max\limits_{0\leq s\leq \lambda^{-2}T}|Z^{\lambda,\omega}_0(s)|\geq r)$ coincide, we get that
\begin{equation}\label{addi1}
\begin{array}{c}
\displaystyle
E\Big(\lambda^p\max\limits_{0\leq s\leq \lambda^{-2}T}|Z^{\lambda,\omega}_0(s)|^p\Big)\\[4mm]
\displaystyle
=p\int_0^\infty r^{p-1}dr\, P({\tilde T}_r\leq T)\leq pe^T\int_0^\infty r^{p-1}dr\, (1-\eps_0)^{r-1}\leq \eta_0,
\end{array}
\end{equation}
where $\eta_0=pe\int_0^\infty r^{p-1}dr\, (1-\eps_0)^{r-1}$ is a constant.

Let now $T>1$ with integer part $[T]$. Note that
$$ \max\limits_{0\leq s\leq \lambda^{-2}T}|Z^{\lambda,\omega}_0(s)|
\leq \sum_{j=0}^{[T]}\max\limits_{j\lambda^{-2}\leq s\leq (j+1)\lambda^{-2}}  |Z^{\lambda,\omega}_0(s)-Z^{\lambda,\omega}_0(\lambda^{-2}j)|.$$
Therefore
$$E\Big(\lambda^p\max\limits_{0\leq s\leq \lambda^{-2}T}|Z^{\lambda,\omega}_0(s)|^p\Big)
\leq (T+1)^p \eta_0.$$
If we change variable to $t=\lambda^{-2}T$, we obtain
\begin{equation}\label{addi2}
E\Big(\max\limits_{0\leq s\leq t}|Z^{\lambda,\omega}_0(s)|^p\Big)\leq \lambda^{-p} (\lambda^2t+1)^p\eta_0.
\end{equation}
On the other hand
$$ E\Big(\max\limits_{0\leq s\leq t}|W^1_s|^p\Big)\leq C_p t^{p/2}.$$
Combining \eqref{addi1}, \eqref{addi2} and the last estimate and considering the lower bound $t\geq\lambda^{-2}$, we obtain the desired inequality.

\end{proof}

\begin{proof}[Proof of Theorem \ref{theo:continuityIntro}]
Apply Lemma \ref{lm:estimateAlambda}.
\end{proof}

\section{Construction of steady states}\label{s_sec2}

The goal of this section is to prove the existence of the steady state and weak steady state under the assumption
of finite range of dependence and get an explicit formula in terms of regeneration times, see formula (\ref{nu_f-def}).

In this section we assume {\bf Assumptions 1--4} to hold.

\medskip

We recall that a function $f$  or an element $f$ in $H^{-1}(\Omega)$ is local if there exists
$R_f$ such that $f$ is measurable with respect to the $\sigma$-field $\HH_{B_{R_f}}$ where
$B_R$ is the ball of radius $R$.

\begin{theo}\label{t_lln}
For all $\lambda>0$ there exists a unique Borel probability measure $\nu_\lambda$ on $\Omega$ such that
for any bounded local function $f$, for $\mathbb Q$ almost all $\omega$ and $P$ almost surely we have
$$
\lim\limits_{t\to\infty}\ \frac{1}{t}\int_0^t f(\omega^\lambda(s))\,ds =\nu_\lambda(f),
$$
where $\omega^\lambda(s)=X_0^{\lambda,\omega}(s).\omega$.
\end{theo}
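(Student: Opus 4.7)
The plan is to construct $\nu_\l$ via regeneration times adapted to the forcing direction $e_1$. First, under \textbf{Assumptions 1--4}, I would build---following the approach of \cite{kn:LS} with the modifications announced in the introduction and used in \cite{kn:GMP}---an increasing sequence of random times $0<\tau_1<\tau_2<\cdots$ at which the trajectory $X_0^{\l,\o}$ reaches a new record in the $e_1$-direction that is subsequently never crossed again, so that under the annealed law $P\times\Q$ the histories of the environment seen from the particle before and after $\tau_n$ decouple. Thanks to the finite range $R$ of Assumption~4, the blocks
$$\bigl(\tau_{n+1}-\tau_n,\ X_0^{\l,\o}(\tau_{n+1})-X_0^{\l,\o}(\tau_n),\ (\o^\l(\tau_n+s))_{0\le s\le\tau_{n+1}-\tau_n}\bigr),\qquad n\ge 1,$$
are i.i.d., and I would verify the integrability of $\tau_2-\tau_1$ (of order $\l^{-2}$, which will matter in Section~\ref{ss_h-1}). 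This regeneration construction, together with its moment bound, is the main obstacle: forcing the particle past a decoupling region of size $R$ in finite expected time demands quantitative ballistic estimates on $X_0^{\l,\o}\cdot e_1$ of the kind established in \cite{kn:GMP}.

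Once regeneration is in place, the end of the argument is classical. For a bounded local function $f$ the variables $Y_n=\int_{\tau_n}^{\tau_{n+1}}f(\o^\l(s))\,ds$, $n\ge 1$, are i.i.d.\ and dominated by $\|f\|_\infty(\tau_{n+1}-\tau_n)$, hence integrable. The strong law of large numbers then yields, under $P\times\Q$,
$$\frac{1}{n}\sum_{k=1}^{n}Y_k\longrightarrow E[Y_1],\qquad \frac{\tau_n}{n}\longrightarrow E[\tau_2-\tau_1],$$
so that $\tau_n^{-1}\int_0^{\tau_n}f(\o^\l(s))\,ds$ converges almost surely to the deterministic constant
$$\nu_\l(f):=\frac{E\bigl[\int_{\tau_1}^{\tau_2}f(\o^\l(s))\,ds\bigr]}{E[\tau_2-\tau_1]}.$$
A standard interpolation between consecutive regeneration times, using $|f|\le\|f\|_\infty$ together with $\tau_{n+1}/\tau_n\to 1$ almost surely, upgrades this to convergence of $t^{-1}\int_0^t f(\o^\l(s))\,ds$ in continuous time. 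Since the limit is deterministic, Fubini converts the annealed a.s.\ convergence into the quenched statement of the theorem: for $\Q$-a.e.~$\o$, the convergence holds $P$-a.s.

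It remains to extend $f\mapsto\nu_\l(f)$ to a Borel probability measure on $\O$. Linearity, positivity, and the normalization $\nu_\l(\1)=1$ are immediate from the formula above. Because bounded local functions are convergence-determining on $\O$---whose Borel $\sigma$-algebra is generated by cylinder events---the Daniell--Kolmogorov extension theorem produces the desired probability measure $\nu_\l$. Uniqueness is then automatic: any probability measure satisfying the stated convergence must agree with $\nu_\l$ on all bounded local functions and hence coincide with it on $\O$.
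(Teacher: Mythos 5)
Your overall strategy is the one the paper follows: Shen-type regeneration times adapted to the direction $e_1$, i.i.d.\ blocks under the annealed law, the strong law of large numbers plus interpolation, and an extension of the resulting functional to a Borel measure. There is, however, a genuine gap in the independence step. As stated, the claim that the blocks $\bigl(\o^\l(\tau_n+s)\bigr)_{0\le s\le\tau_{n+1}-\tau_n}$ are i.i.d.\ cannot be right: a single value $\o^\l(\tau_n)$ already determines the entire environment, so the environment-seen-from-the-particle blocks are never independent. What decouples are functionals that depend on the environment only in a neighbourhood of the path, and for this the width of the regeneration slab must dominate not only the dependence range $R$ of Assumption~4 but also the localization radius $R_f$ of $f$. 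With a single, $f$-independent regeneration structure whose slab width is tied to $R$ alone, the variables $Y_n=\int_{\tau_n}^{\tau_{n+1}}f(\o^\l(s))\,ds$ are \emph{not} independent once $R_f>R$. This is precisely why the paper makes the regeneration times depend on $f$, through $R(\l)=\max\{R,R_f,1/\l\}$, and why it runs the whole construction on the enlarged process $Z$ in $\R^{d+1}$ (with an auxiliary Brownian coordinate) rather than on $X^{\l,\o}_0$ alone. One could instead keep a fixed regeneration structure and argue via $m$-dependence of the blocks, but some repair of this step is needed.

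The gap propagates to your extension argument. Because $\tau_1$ depends on $f$, the formula $\nu_\l(f)=\widehat{\mathbb E}^\l_0[{\tt A}_f(\tau^\l_1)\,|\,D=\infty]/\widehat{\mathbb E}^\l_0[\tau^\l_1\,|\,D=\infty]$ does not come with a single underlying kernel, and $\sigma$-additivity is not automatic from a Daniell-type extension. The paper proves it by hand: for a sequence of bounded functions $f_n\downarrow 0$ measurable with respect to $\HH_{B_{R_0}}$, it shows $\nu_\l(f_n)\to 0$ using exponential moment bounds on $\tau^\l_1$ that are \emph{uniform} over all $f$ with $R_f\le R_0$ and $|f|\le 1$ (the uniformity statements in Lemma~\ref{l_lbt} and Proposition~\ref{p_finitreg}). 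If you adopt the $f$-dependent regeneration times, you need this uniformity; if you insist on a single regeneration structure, you need the $m$-dependence fix above. Either way, the remaining ingredients of your argument (LLN, interpolation, Fubini to pass from annealed to quenched, uniqueness via bounded local functions generating the Borel $\sigma$-field) match the paper's proof.
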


\begin{theo}\label{t_llnH-1}
For all $\lambda>0$, 
for any  local  $f$ in $H^{-1}_\infty(\Omega)$, then $$
\lim\limits_{t\to\infty}\ \frac{1}{t}\int_0^t f(\omega^\lambda(s))\,ds :=\nu_\lambda(f)
$$
exists for $\mathbb Q$ almost all $\omega$ and $P$ almost surely.

\end{theo}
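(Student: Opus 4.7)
The plan is to extend the proof of Theorem \ref{t_lln}, from bounded local functions to local elements of $H^{-1}_\infty(\Omega)$, using the same regeneration-time machinery developed earlier in this section together with the quantitative control of Lemma \ref{lm:estimateAlambda}. First, I would choose a representation $f=\div F$ with $F\in L^\infty(\Omega)^d$ measurable with respect to $\HH_{B_{R_f}}$. As in Section \ref{s_exten}, the additive functional $A^{\lambda,\omega}_{0,f}$ is then defined by approximation by smooth bounded local test functions, the approximation being controlled by the moment bound
$$E\Bigl[\max_{s\le t}|A^{\lambda,\omega}_{0,f}(s)|^p\Bigr]\le {\tt C}_p\lambda^p t^p\|f\|_{H^{-1}_\infty(\Omega)}^p,\qquad t\ge 1/\lambda^2,$$
which is the main quantitative input for the remainder of the argument.

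Next, using the regeneration times $\tau_k$ constructed earlier in this section, I would decompose
$$A^{\lambda,\omega}_{0,f}(t)=A^{\lambda,\omega}_{0,f}(\tau_1)+\sum_{k=1}^{N(t)}Y_k+R_t,\qquad Y_k:=A^{\lambda,\omega}_{0,f}(\tau_{k+1})-A^{\lambda,\omega}_{0,f}(\tau_k),$$
with $N(t):=\max\{k:\tau_k\le t\}$ and $R_t$ a tail term. Exactly as in the proof of Theorem \ref{t_lln}, the finite range of dependence (Assumption 4) combined with the regeneration property and the locality of $F$ ensures that the pairs $(Y_k,\tau_{k+1}-\tau_k)_{k\ge 1}$ form an i.i.d.\ sequence under the annealed measure $P\times\mathbb Q$.

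The heart of the argument is to prove $E[|Y_1|]<\infty$. I would partition on the dyadic scales $\{\tau_{k+1}-\tau_k\in[2^j\lambda^{-2},2^{j+1}\lambda^{-2})\}$, apply the strong Markov property at $\tau_k$ (after which the environment seen from the particle is again distributed according to a shift of $\mathbb Q$), and invoke the above moment bound with an exponent $p>1$ together with H\"older's inequality. This yields a bound of the form $\sum_j C\,2^{j+1}\lambda^{-1}\|f\|_{H^{-1}_\infty(\Omega)}\,P(\tau_{k+1}-\tau_k\ge 2^j/\lambda^2)^{1-1/p}$, which converges provided the regeneration gaps have sufficiently many polynomial moments --- a property that should be part of the construction of the $\tau_k$ presented earlier. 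For short gaps below $1/\lambda^2$, the elementary pointwise bound from the Aronson estimates used in the proof of Lemma \ref{lm:estimateAlambda} gives uniform control.

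Finally, the SLLN applied to $(Y_k)_{k\ge 1}$ together with the elementary renewal estimate $N(t)/t\to 1/E[\tau_2-\tau_1]$ yields $\tfrac{1}{t}A^{\lambda,\omega}_{0,f}(t)\to E[Y_1]/E[\tau_2-\tau_1]=:\nu_\lambda(f)$, first under the annealed law $P\times\mathbb Q$ and then $P$-a.s.\ for $\mathbb Q$-almost every $\omega$ by the standard conditioning argument based on the ergodicity of $\mathbb Q$ with respect to the action of $\mathbb R^d$. The main obstacle is the third step: reconciling the restriction $t\ge 1/\lambda^2$ in Lemma \ref{lm:estimateAlambda} with the possibly short random lengths $\tau_{k+1}-\tau_k$, and leveraging the polynomial tail decay of the regeneration gaps furnished by their construction to make the dyadic sum summable.
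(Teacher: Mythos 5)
Your overall strategy---regeneration times, i.i.d.\ block increments, the SLLN along blocks, then de-annealing---is exactly the paper's (Section \ref{ss_proof-t}), and your dyadic argument for the integrability of the block increment is a legitimate alternative to the paper's, which instead gets \emph{exponential} moments of $\tau_1^\lambda$ from Proposition \ref{p_finitreg} together with Aronson exit-time control of $\sup_{s\le\tau_1^\lambda}|Z(s)|$; so the polynomial tail decay you hope for is available in a much stronger form. However, two of your supporting claims are wrong as stated and one omission is substantive. First, the renewal structure is \emph{not} a consequence of ``the locality of $F$'': the paper explicitly warns that even when $f$ is local, a bounded $F$ with $\div F=f$ need not be local. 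What makes Lemma \ref{l_bou_rt} and Theorem \ref{thm:renew} work is the locality of $f$ itself (and of $\sigma$), i.e.\ that the law of the trajectory in a region depends on the environment only through $\HH$ of an $R(\lambda)$-neighbourhood of that region; $F$ enters only through the divergence-form writing of the generator used for the Aronson bounds. Likewise, after a regeneration time the environment seen from the particle is \emph{not} distributed as a shift of $\mathbb Q$ but as a conditioned law; your application of Lemma \ref{lm:estimateAlambda} survives only because that bound is quenched and uniform in $\omega$.

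The substantive omission is the auxiliary Brownian motion. The regeneration times of Section \ref{ss_reg_time} are built for the $(d+1)$-dimensional process $Z=(X,\,y+A_f+W^1)$, where the independent $W^1$ is added precisely so that the generator is uniformly elliptic in all $d+1$ variables and the coupling construction (Lemma \ref{l_lbt}, Proposition \ref{p_reg_ti}) applies. Theorem \ref{thm:renew} therefore gives i.i.d.\ increments of $Z$, whose last component is $A_f+W^1$, not of $A_f$ alone; your $Y_k$ differs from the i.i.d.\ variable by $W^1_{\tau_{k+1}}-W^1_{\tau_k}$, and the identification $\nu_\lambda(f)=E[Y_1]/E[\tau_2-\tau_1]$ is unjustified until this term is disposed of. The paper does so by noting that $k^{-1}W^1_{\tau_k}\to0$ a.s.\ and that $\widehat{\mathbb E}^\lambda_0\big[{\tt W}^1_{\tau^\lambda_1}\,\big|\,D=\infty\big]=0$, the latter via Lemma \ref{l_bou_rt} and optional stopping at the times $\lambda^2S_k^\lambda$. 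You should add this step.
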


\begin{cor}\label{corollary}
For all $0<\lambda\leq 1$, the steady state and weak steady state exist.
\end{cor}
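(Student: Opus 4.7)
The plan is to package the preceding results: Theorem \ref{t_lln} handles bounded local test functions, Theorem \ref{t_llnH-1} handles local elements of $H^{-1}_\infty(\Omega)$, and Lemma \ref{lm:estimateAlambda} together with Theorem \ref{theo:continuityIntro} supplies the uniformity needed to extend both to the full classes prescribed by Definitions \ref{df:steadystateIntro} and \ref{df:weaksteadystateIntro}.

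For the steady state, Theorem \ref{t_lln} already produces the unique candidate Borel probability measure $\nu_\lambda$, so only the upgrade from bounded local to bounded continuous $f$ is required. I would approximate a bounded continuous $f$ by bounded local $f_n$, for instance via $f_n=E_{\mathbb{Q}}[f\mid\HH_{B_n}]$ followed by a mollification to restore continuity, so that $\sup_n\|f_n\|_\infty\leq\|f\|_\infty$ and $f_n\to f$ pointwise on $\Omega$. Theorem \ref{t_lln} gives $t^{-1}\int_0^t f_n(\omega^\lambda(s))\,ds\to\nu_\lambda(f_n)$ for every $n$, and a tightness argument for the occupation measures $\mu_t^\omega:=t^{-1}\int_0^t\delta_{\omega^\lambda(s)}\,ds$ (readily harvested from the regeneration structure built in Section \ref{s_sec2}) combined with the monotone class theorem identifies $\nu_\lambda$ as the only possible weak limit, hence yielding the convergence for $f$.

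The weak steady state is obtained more directly. For every local $f\in H^{-1}_\infty(\Omega)$, Theorem \ref{t_llnH-1} supplies the $P$-almost-sure convergence $t^{-1}A^{\lambda,\omega}_{0,f}(t)\to\nu_\lambda(f)$, and Lemma \ref{lm:estimateAlambda} with $p=2$ bounds this family uniformly in $L^2$ for $t\geq 1/\lambda^2$; uniform integrability then promotes the convergence to $L^1(\WW,P)$, so that the hypothesis of Theorem \ref{theo:continuityIntro} is met and $|\nu_\lambda(f)|\leq {\tt C}_1\lambda\|f\|_{H^{-1}_\infty(\Omega)}$ holds on local elements. The functional $\nu_\lambda$ therefore extends uniquely by continuity to a continuous linear functional on ${\tilde H}^{-1}_\infty(\Omega)$, which is by definition the closure of local elements in $H^{-1}_\infty(\Omega)$.

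It remains to verify the $L^1$ convergence (\ref{eq:convH-1}) for a general $f\in{\tilde H}^{-1}_\infty(\Omega)$. Picking local $f_n\in H^{-1}_\infty(\Omega)$ with $\|f-f_n\|_{H^{-1}_\infty(\Omega)}\to 0$, the triangle inequality
\[
\Bigl\|\tfrac{1}{t}A^{\lambda,\omega}_{0,f}(t)-\nu_\lambda(f)\Bigr\|_{L^1(P)}\leq \Bigl\|\tfrac{1}{t}A^{\lambda,\omega}_{0,f-f_n}(t)\Bigr\|_{L^1(P)}+\Bigl\|\tfrac{1}{t}A^{\lambda,\omega}_{0,f_n}(t)-\nu_\lambda(f_n)\Bigr\|_{L^1(P)}+|\nu_\lambda(f_n)-\nu_\lambda(f)|
\]
is controlled by Lemma \ref{lm:estimateAlambda} on the first summand and by Theorem \ref{theo:continuityIntro} on the third, both bounded by a universal constant times $\lambda\|f-f_n\|_{H^{-1}_\infty(\Omega)}$; one first picks $n$ large so that these are arbitrarily small, and then sends $t\to\infty$ to kill the middle summand via the local case just established. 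The main obstacle in this whole scheme, in my view, lies in the bounded-continuous to bounded-local approximation of the first part: the infinite-dimensional Polish space $\Omega$ is not locally compact, so tightness of the occupation measures is not automatic and must be extracted from the regeneration construction rather than from topology alone; the $H^{-1}$ half is essentially a density argument once the uniform bound of Lemma \ref{lm:estimateAlambda} is in hand.
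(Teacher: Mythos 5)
Your treatment of the weak steady state is correct and is essentially the paper's own argument: pick local $f_n$ converging to $f$ in $H^{-1}_\infty(\Omega)$, get almost sure convergence for each $f_n$ from Theorem \ref{t_llnH-1}, upgrade it to $L^1(\WW,P)$ via the uniform $L^p$ bound of Lemma \ref{lm:estimateAlambda} with $p>1$ (uniform integrability), observe that $\nu_\lambda(f_n)$ is Cauchy because $|\nu_\lambda(f_n)-\nu_\lambda(f_m)|\le {\tt C}_1\lambda\Vert f_n-f_m\Vert_{H^{-1}_\infty(\Omega)}$, and close with the three-term triangle inequality, the first and third terms being $O(\lambda\Vert f-f_n\Vert_{H^{-1}_\infty(\Omega)})$ uniformly in $t\ge\lambda^{-2}$. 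This is exactly Corollary \ref{corollary} as proved in the text, and your phrasing of the approximation (approximating $f$ in the $H^{-1}_\infty$ norm rather than fixing one $F$ and approximating it in $L^\infty$) is if anything the cleaner reading of the definition of $\tilde H^{-1}_\infty(\Omega)$ as a closure.

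The steady-state half is where you depart from the paper, and where your argument has a real gap. The paper deduces the existence of the steady state by approximating the \emph{test function}: a bounded continuous $f$ is approximated by bounded local functions, and the only uniformity needed is the trivial bound $|\nu_\lambda(g)|\le\Vert g\Vert_\infty$ together with control of $t^{-1}\int_0^t(f-f_n)(\omega^\lambda(s))\,ds$. You instead propose to prove weak convergence of the occupation measures $\mu_t^\omega$, which requires tightness of $\{\mu_t^\omega\}_{t\ge1}$ in the non--locally compact space $\Omega$; you assert this is ``readily harvested from the regeneration structure,'' but it is not. The regeneration estimates (Propositions \ref{p_finitreg}, \ref{p_ert}, Lemma \ref{l_transv_b}) control the displacement of $Z$ in $\mathbb R^{d+1}$ and the lengths of regeneration blocks; they say nothing about the environments $\omega^\lambda(s)$ landing, with high occupation frequency, in a \emph{compact} subset of $\Omega$. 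Compactness in $\Omega$ (uniform convergence on compacts of smooth matrix fields) would require uniform equicontinuity or derivative bounds on the fields $X_0^{\lambda,\omega}(s).\omega$, which {\bf Assumption 2} does not provide and which no regeneration estimate yields. So as written, the tightness step is unsupported, and without it the convergence of $\mu_t^\omega(f_n)$ for your countable family of local $f_n$ (which moreover converge to $f$ only $\mathbb Q$-a.e., not uniformly) does not transfer to $\mu_t^\omega(f)$. The fix is to abandon the occupation-measure viewpoint and argue at the level of test functions, as the paper does; the uniform bound $|t^{-1}\int_0^t g(\omega^\lambda(s))\,ds|\le\Vert g\Vert_\infty$ then replaces tightness entirely once one produces local approximants of $f$ with the appropriate control on $f-f_n$ along the trajectory.
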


\begin{proof}[Proof of Corollary \ref{corollary}]
The existence of the steady state is an immediate consequence of   Theorem \ref{t_lln}.

Let now $f$ belong to ${\tilde H}^{-1}_\infty$. Then there is a bounded  $F$ such that $f=\div F$ and
 $F$ can be approximated by bounded and local functions $F_n$.   Apply Theorem \ref{t_llnH-1} to each $f_n=\div F_n$.
By Lemma \ref{lm:estimateAlambda}, $\vert \nu_\lambda(f_n)-\nu_\lambda(f_m)\vert\leq {\tt C}_1\lambda\, \Vert F_n-F_m\Vert_\infty$.
Therefore the sequence $\nu_\lambda(f_n)$ has a limit, say $a$.

From Lemma \ref{lm:estimateAlambda} (with $p>1$) and Theorem \ref{t_llnH-1}, we deduce that
$\frac 1 t A^{\lambda,\omega}_{0,f_n}(t)$ converges to $\nu_\lambda(f_n)$ in $L^1(\mathcal W,P)$ for $\mathbb Q$ almost all $\omega$.
Applying Lemma \ref{lm:estimateAlambda} again, we see that
$\frac 1 t A^{\lambda,\omega}_{0,f}(t)$ converges to $a$ in $L^1(\mathcal W,P)$ for $\mathbb Q$ almost all $\omega$. In particular,
the limit $a$ does not depend on the choice of $F$ and the approximating sequence $(F_n)$. We call it $\nu_\lambda(f)$.
That $\nu_\lambda$ is a linear continuous functional on ${\tilde H}^{-1}_\infty(\Omega)$ follows at once from Lemma \ref{lm:estimateAlambda}.
\end{proof}

\medskip
The remainder of this section including subsections \ref{ss_reg_time} and \ref{ss_proof-t} is devoted to the proof of Theorems  \ref{t_lln} and \ref{t_llnH-1}

Recall from the proof of Lemma \ref{lm:estimateAlambda}  the notation $z=(x,y)$, $x\in\mathbb R^d$ and $y\in\mathbb R$
and the definition the $\mathbb R^{d+1}$ valued process
$$
Z_z^{\lambda,\omega}(t)=\big(X_x^{\lambda,\omega}(t),\, y+A_{x,f}^{\lambda,\omega}(t)+W^1_t\big),
$$
where $W^1$ is an independent one-dimensional Brownian motion (which is assumed to be defined on the same probability space
$(\mathcal{W},\mathcal{F},P)$ as $W$).

Recall that the generator can be written as
$$
\mathcal{M}^{\lambda,\omega}=(\mathcal{L}^{\lambda,\omega})_x+\frac{1}{2}\partial_y^2+f(x.\omega)\partial_y.
$$
We shall use this formula when $f$ is bounded, as in Theorem \ref{t_lln}.
When $f$ belongs to $H^{-1}_\infty(\Omega)$, we rather use the divergence form (see (\ref{eq:reversible}):
$$
\mathcal{M}^{\lambda,\omega}q =\frac 12\div_x(a^\omega\nabla_xq)+\lambda a^\omega e_1\nabla_xq +
\div_x(F^\omega \partial_y q)-\partial_y(F^\omega\nabla_x q)+\frac 12\partial^2_{y}q,
$$
where $F$ is bounded and satisfies $\div F=f$.
(Observe that although $f$ is local, $F$ need not be local itself.)

\subsection{Regeneration times}\label{ss_reg_time}

We assume that $f$ is local and either $f$ is bounded or $f$ belongs to $H^{-1}_\infty(\Omega)$.

Before embarking in the proofs of Theorems \ref{t_lln} and \ref{t_llnH-1}, let us sketch the main steps of the construction of steady states and explain how this section of the paper is organized.

In both Theorems \ref{t_lln} and \ref{t_llnH-1}, we have to study the convergence of the additive functional
$A^{\lambda,\omega}_{0,f}(t)=\int_0^t f(\omega^\lambda(s))ds$ for $\mathbb Q$ almost all $\omega$.
As in the preceeding paragraph, we shall work with the process $Z^{\lambda,\omega}_x$ and deduce the convergence of
$ A^{\lambda,\omega}_{0,f}(t)$ from the regeneration properties of $Z^{\lambda,\omega}_x$. More precisely, the main idea is to construct an increasing sequence of random times, $\tau_1^\lambda<\tau_2^\lambda<...$, that we call {\it regeneration times} and are such that the increments of the process $Z^{\lambda,\omega}_0$ betweeen successive regeneration times are i.i.d. under the annealed law $\mathbb P_z^\lambda$. Then the convergence of $ \frac1 t A^{\lambda,\omega}_{0,f}(t)$ follows at once from the convergence of
$ \frac1 t Z^{\lambda,\omega}_0(t)$ which in turn follows from the law of large numbers for i.i.d. sequences.

Note that, in order to carry out this programm, we also need some bounds on the moments of the regeneration times. Also observe that, as a useful by-product of this proof, we shall be able to express the limit $\nu_\lambda(f)$ in terms of the increments of the additive functional between two successive regeneration times, see formula \ref{nu_f-def}.

Of course, the decoupling properties along regeneration times is tightly related to {\bf Assumption 4}. Very roughly speaking, we proved in \cite{kn:GMP} that the process $X^{\lambda,\omega}_0$, and therefore also the process $Z^{\lambda,\omega}_0$, is transient in direction $e_1$. So there are non-backtracking times $t$ such that: before $t$, the diffusion only visited the half-space $\{x\,:\, e_1\cdot x< e_1\cdot Z^{\lambda,\omega}_0(t)\}$ and after time $t$, it will only visit the half-space $\{x\,:\, e_1\cdot x\geq e_1\cdot Z^{\lambda,\omega}_0(t)\}$. And since, due to {\bf Assumption 4}, the restrictions of the environment in these two half-spaces are independent, we are done.
This is obviously wrong for at least two reasons. First a diffusion process never does such a thing as non-backtracking. Secondly, in order to use {\bf Assumption 4}, we need a little bit of space between the two hyperplanes.
Let us discuss how these issues are addressed in \cite{kn:LS}.

 We carry the whole construction on path space, equipped with the annealed law.
 As a first step towards the desired decoupling property we enlarge the path space with the addition of a sequence of independent Bernoulli random variables $Y_k$ and provide a coupling of this sequence and the diffusion $Z^{\lambda,\omega}_0$,
 see Proposition \ref{p_reg_ti}. The coupling is constructed such that, at times where a Bernoulli variable $Y_k$ takes the value $1$, the canonical process temporarily forgets about the environment and makes a `deterministic' jump in direction $e_1$ of size $9R(\lambda)$. (Here `deterministic' means `independently of what the environment may look like' and $R(\lambda)$ is a parameter that will be chosen later.) If $Y_k$ takes the value $0$, we just do what should be done to retrieve the law of
 $Z^{\lambda,\omega}_0$. Of course, we should tune the parameter $\delta$ of the Bernoulli variables $Y$ so that this `deterministic' jump we impose has a positive probability to occur. How to choose $\delta$ then depends on a lower bound of the transition kernel of the process $Z^{\lambda,\omega}_0$, see Lemma \ref{l_lbt} and note that we need a lower bound that is uniform with respect to $\omega$ and that, since the process $Z^{\lambda,\omega}_0$ involves $f$, the best value we can use for $\delta$ depends on $f$. It also depends on $\lambda$.

Regeneration times will then be times $t$ such that (1) at time $t$, the process reaches a local maximum (with a variation of $R(\lambda)$) and the corresponding Bernoulli variable  in the sequence $Y_k$ takes the value $1$ and (2) after time $t$, the process does not backtrack more than $R(\lambda)$. This construction allows one to explicitly express how the process depends on the restrictions of the environment in the two half-spaces already discussed (and now separated by a distance $R(\lambda)$).
$R(\lambda)$ is chosen larger than the range of dependence $R$ from {\bf Assumption 4} and the size of the support of the local function $f$: $R_f$.

The organization of the rest of this section is as follows.
After introducing some notation on path space, we state the bounds we shall need to choose the parameter $\delta$. Proposition \ref{p_reg_ti} is borrowed from \cite{kn:LS}; it describes the properties of the coupling construction of the diffusion and the Bernoulli random variables. Then we give a detailed definition of the regeneration times starting with formula (\ref{Mdef}).  Theorem \ref{thm:renew}, also borrowed from  \cite{kn:LS}, says that the increments of the process between successive regeneration times are indeed i.i.d. We do not prove Theorem \ref{thm:renew} but we do include the proof of Lemma \ref{l_bou_rt}, that we shall need later and which is actually very close to the proof of Theorem \ref{thm:renew}. In Proposition \ref{p_finitreg}, we establish some bounds on the regeneration times. Finally, in sub-section \ref{ss_proof-t}, we finish the proofs of Theorems \ref{t_lln} and \ref{t_llnH-1}.

The construction of regeneration times will also be used in the proof of fluctuation-dissipation relations and then we shall need bounds on the regeneration times that depend on $\lambda$, see sub-section \ref{ss_est_reg_t} .


The regeneration times will be constructed on canonical space $C([0,\infty), \mathbb R^{d+1})$.  We use the notation $Z(t)_{t\geq 0}$
for the coordinate map on $C([0,\infty), \mathbb R^{d+1})$. The first $d$ components of $Z(\cdot)$ will be denoted by $X(\cdot)$.
Let $P_z^{\lambda,\omega}$  be the law of $Z_z^{\lambda,\omega}$, and $E_z^{\lambda,\omega}$ be the corresponding
expectation.  Let $\mathbb P_z^\lambda$ be the annealed law
$$
\mathbb P_z^\lambda(A)=\int d\mathbb Q(\omega)\int dP_z^{\lambda,\omega}(w){\bf 1}_A(\omega,w)
$$
for any measurable subset $A\subset\Omega\times C([0,\infty),\mathbb R^{d+1})$.

\medskip
Next we set
$$
R(\lambda)= \max\big\{R, R_f, \frac{1}{\lambda}\big\},
$$
where $R$ is the constant from {\bf Assumption 4} and $R_f$ is chosen so that $f$ is measurable with respect to the $\sigma$-field
$\mathcal H_{B_{R_f}}$.  Denoting $B_r(z)$ the ball in $\mathbb R^{d+1}$ centered at $z$ of radius $r$, we let
$U^z=B_{6R(\lambda)}(z+5R(\lambda)\check e_1)$,    $B^z=B_{R(\lambda)}(z+9R(\lambda)\check e_1)$ with
$\check e_1=(e_1,0)$. Then we set
\begin{equation}\label{exi_tim}
T_{U^z}=\inf \{s\geq 0\,:\, Z(s)\not\in U^z\}
\end{equation}
so that $T_{U^z}$ is the exit time from $U^z$.  We also define the corresponding transition densities $p_{\lambda,\omega, U^z}(s,z',z'')$
which satisfy the relation
$$
P_{z'}^{\lambda,\omega}\big\{Z(s)\in G,\, T_{U^{z'}}>s\}=\int _G p_{\lambda,\omega, U^z}(s,z',z'')\,dz''
$$
for any Borel set $G\subset U^{z'}$.

\begin{lm}\label{l_lbt}
Let $0<\lambda\leq 1$.
There exists $\delta_f^\lambda>0$ such that
\begin{equation}\label{bou_dens}
p_{\lambda,\omega, U^z}(\lambda^{-2},z',z'')\geq \frac{2\delta_f^\lambda}{|B_{R(\lambda)}|}
\end{equation}
 for all $z'\in\mathbb R^{d+1}$, $z''\in B^z$. Moreover
 for any $R_0$ there exists $\delta^\lambda>0$ such that
 we may choose $\delta_f^\lambda\geq\delta^\lambda$ for any $f$ such that $R_f\leq R_0$ and either $|f|\leq 1$ or $\Vert f\Vert_{H^{-1}_\infty(\Omega)}\leq 1$.
\end{lm}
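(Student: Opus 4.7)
The first reduction is translation: acting by $-z'$ on coordinates and on $\omega$ reduces the estimate to $z'=0$; since the bound must hold for every $\omega$, this is harmless. Both cases then admit a divergence-form representation of $\mathcal{M}^{\lambda,\omega}$ with uniformly bounded coefficients. In the $H^{-1}_\infty$ case pick $F$ with $f=\div F$ and $\|F\|_\infty\le\|f\|_{H^{-1}_\infty(\Omega)}$ and use (\ref{eq:reversible}). In the bounded case, because $f$ does not depend on $y$, $f\partial_y q=\partial_y(fq)$, so the $y$-drift is also in divergence form with coefficient bounded by $\|f\|_\infty$. In both cases the ellipticity constant is $\varkappa$, the non-divergence $x$-drift $\lambda a^\omega e_1$ has norm at most $\varkappa^{-1}$, and all remaining coefficients are bounded in terms of $\varkappa$, $\|F\|_\infty$ or $\|f\|_\infty$.

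For the $H^{-1}_\infty$ case I would then perform the rescaling $\tilde z=\lambda z$, $\tilde t=\lambda^2 t$ of (\ref{gen_z}); it works cleanly because every $\div$ in (\ref{eq:reversible}) contributes a factor $\lambda$ that absorbs the $\lambda^{-2}$ produced by the time rescaling. The time $\lambda^{-2}$ becomes $1$, and $U^0,B^0$ become balls of radii $6\rho,\rho$ centered at $5\rho\check e_1,9\rho\check e_1$, where $\rho:=\lambda R(\lambda)\in[1,\max(R,R_f,1)]$. A direct check shows $\tilde B^0\subset\tilde U^0$ and that both $0$ and every point of $\tilde B^0$ lie at distance at least $\rho$ from $\partial\tilde U^0$. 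Applying Aronson's Gaussian lower bound to the rescaled parabolic equation (whose coefficients are uniformly bounded in $\lambda,\omega,\tilde z$) and iterating Moser's parabolic Harnack inequality along a chain of balls of radius $\rho/2$ inside $\tilde U^0$ of uniformly bounded length yields $\tilde p_{\tilde U^0}(1,0,\tilde z'')\ge c_0>0$ for every $\tilde z''\in\tilde B^0$, with $c_0$ depending only on $\varkappa$, $d$, $R$, $R_f$, $\|F\|_\infty$. Undoing the rescaling (densities acquire a factor $\lambda^{d+1}$, matching $|B_{R(\lambda)}|^{-1}$ up to the bounded factor $|B_\rho|^{-1}$) produces the inequality with $\delta_f^\lambda$ a positive constant independent of $\lambda$.

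In the bounded case the same rescaling would multiply the $f$-coefficient by $\lambda^{-1}$ (the $y$-flux has no $x$-divergence to absorb the time rescaling), so I apply Aronson and Moser directly in the unrescaled coordinates. Aronson gives $p(\lambda^{-2},0,z'')\ge c_1\lambda^{d+1}\exp(-c_2|z''|^2\lambda^2-c_3\lambda^{-2})$ for the unkilled kernel, which for $|z''|\le 10R(\lambda)$ is positive (though exponentially small in $\lambda^{-2}$); Moser's Harnack on a chain of balls inside $U^0$ then passes to the Dirichlet kernel and furnishes a positive $\delta_f^\lambda$. The ``moreover'' statement follows because in both cases every constant depends only on $\varkappa$, $d$, $R$, $R_0$ and on the unit bounds on $f$ or $F$, so one can take $\delta^\lambda$ to be the minimum of the two quantities. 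The main obstacle I anticipate is the Harnack chain argument: one must verify that the chosen geometry -- $U^z$ of radius $6R(\lambda)$ with center $z+5R(\lambda)\check e_1$, $B^z$ of radius $R(\lambda)$ with center $z+9R(\lambda)\check e_1$, so that both $z'$ and $B^{z'}$ are at distance $R(\lambda)$ from $\partial U^{z'}$ -- permits a chain of balls of radius $\asymp R(\lambda)$ contained in $U^{z'}$ and staying at distance $\asymp R(\lambda)$ from $\partial U^{z'}$, so that the cumulative Moser constant is controlled.
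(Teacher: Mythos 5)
Your argument is correct and follows the same route as the paper, which simply invokes Aronson's fundamental solution estimates (via Lemma 5.2 of \cite{kn:GMP}) after noting that in both cases the generator $\mathcal{M}^{\lambda,\omega}$ is a uniformly elliptic divergence-form operator with bounded coefficients. You merely spell out the details the paper leaves to the reference --- the rescaling, the geometry of $U^{z'}$ and $B^{z'}$, and the Harnack chain passing to the killed kernel --- and your accounting of which constants depend on what correctly yields both the positivity of $\delta_f^\lambda$ and the uniformity in $f$ claimed in the ``moreover'' part.
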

\begin{proof}
The required bound is a consequence of the fundamental solution estimates
obtained in \cite{kn:aronson}, see Lemma 5.2 in \cite{kn:GMP}.  Remember that due to {\bf Assumption 2} the matrix $a$ is uniformly elliptic and
either $f$ is bounded or $f=\div F$ where $F$ is bounded.
\end{proof}

We proceed with introducing a coupling construction. We mostly follow the construction  of \cite{kn:LS} (see also \cite{kn:GMP}). First,
we enlarge the probability space by adding a sequence $\{Y_k\}_{k=0}^\infty$ of i.i.d. Bernoulli random variables.  Let $(\mathcal{F}_t)_{t\geq 0}$
be the filtration generated by $(Z(t))_{t\geq0}$ and $\mathcal{J}_m=\sigma\{Y_0,\ldots,Y_m\}$.  Let $\theta_m^\lambda$ be the rescaled shift operator
defined by
$$
\theta_m^\lambda(Z(\cdot))(s)=Z(\lambda^{-2}m+s),\quad s\geq 0.
$$
We extend these operators by setting
$$
\theta_m^\lambda((Z(s))_{s\geq0},\,(Y_k)_{k\geq0})=((Z(\lambda^{-2}m+s))_{s\geq0},\,(Y_{k+m})_{k\geq0}), \quad m\in\mathbb N.
$$

Part (i) of the Proposition below states that we indeed couple i.i.d. Bernoulli random variables and $P^{\l, \o}_z $. Part (ii) expresses the Markov property of the coupling $\widehat{P}^{\l, \o}_z $. Part (iii) says that, when a variable $Y_k$ takes the value $1$, then the diffusion makes this `deterministic' jump we discussed in the introduction of this section.

\begin{prop} \label{p_reg_ti}
There exists, for every $\l$, $\o$ and $z$, a probability measure $\widehat{P}^{\l, \o}_z $ on the enlarged probability space
such that, with $\delta=\delta_f^\lambda$ from Lemma \ref{l_lbt},
\begin{itemize}
\item
[(i)] The law of $(Z(t))_{t \geq 0}$
under $\widehat P^{\l, \o}_z $ is $P^{\l, \o}_z $, and the sequence $(Y_k)_{k \geq 0}$ is
a sequence of i.i.d. Bernoulli variables with success probability $\delta$ under $\widehat{P}^{\l, \o}_z $.
\item
[(ii)] Under $\widehat P^{\l, \o}_z $,
$(Y_n)_{n \geq m}$
is independent of $\FF_{\l^{-2}m} \times {\mathcal J}_{m-1}$,
and conditioned on
$\FF_{\l^{-2}m} \times {\mathcal J}_m$, \ $Z\circ \theta^\l_m$ has the same law as $Z$ under
$\widehat{P}^{\l, \o}_{Z(\l^{-2}m),Y_m} $, where
$\widehat{P}^{\l, \o}_{z,y} $ denotes the conditioned law $\widehat{P}^{\l, \o}_z[\cdot | Y_0 =y]$, (for $y \in \{0,1\}$).
\item
[(iii)] $\widehat{P}^{\l, \o}_{z,1} $-almost surely, $Z(t) \in U^z$ for $t \in [0,\l^{-2}]$ and
the distribution of $Z(\l^{-2})$ under $\widehat{P}^{\l, \o}_{z,1} $ is the uniform distribution on $B^z$.
\end{itemize}
\end{prop}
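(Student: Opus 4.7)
The plan is to perform a Nummelin-type splitting of the path measure adapted to blocks of length $\lambda^{-2}$. First I construct the one-step coupling on $[0,\lambda^{-2}]$, then iterate on the time grid $\{\lambda^{-2}m\}$ using the Markov property of $Z$ and the Ionescu--Tulcea theorem to build $\widehat P^{\lambda,\omega}_z$ on the enlarged space $C([0,\infty),\mathbb R^{d+1})\times\{0,1\}^{\mathbb N}$.

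For the single-step construction, fix $z$ and $\omega$ and let $\mu^z$ denote the law of $(Z(t))_{0\leq t\leq \lambda^{-2}}$ under $P^{\lambda,\omega}_z$. Define $\widehat\mu^z_1$ as the probability measure on $C([0,\lambda^{-2}],\mathbb R^{d+1})$ obtained by first drawing an endpoint $z''$ uniformly on $B^z$ and then the bridge, from $z$ to $z''$, of the process $Z$ killed upon exiting $U^z$. Disintegrating the restriction of $\mu^z$ to $\{T_{U^z}>\lambda^{-2}\}$ with respect to the endpoint $Z(\lambda^{-2})$ and invoking Lemma \ref{l_lbt} yields the measure inequality
$$
\mu^z\geq 2\delta\,\widehat\mu^z_1,
$$
from which $\widehat\mu^z_0:=(1-\delta)^{-1}(\mu^z-\delta\,\widehat\mu^z_1)$ is a bona fide probability measure and we get the mixture identity $\mu^z=\delta\,\widehat\mu^z_1+(1-\delta)\,\widehat\mu^z_0$. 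Property (iii) is built into the definition of $\widehat\mu^z_1$.

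I then extend to the full time axis by induction. Assuming the joint law of $(Z|_{[0,\lambda^{-2}m]},Y_0,\ldots,Y_{m-1})$ has been constructed, I specify the conditional law of the pair $(Y_m,\,Z(\lambda^{-2}m+\cdot)|_{[0,\lambda^{-2}]})$ given $\FF_{\lambda^{-2}m}\times\mathcal J_{m-1}$ by the rule: $Y_m$ is Bernoulli$(\delta)$ independent of the past, and then the next path segment is drawn from $\widehat\mu^{Z(\lambda^{-2}m)}_{Y_m}$. The Ionescu--Tulcea theorem assembles these transition kernels into $\widehat P^{\lambda,\omega}_z$. The one-step mixture identity together with the Markov property of $Z$ under $P^{\lambda,\omega}$ forces the $Z$-marginal to coincide with $P^{\lambda,\omega}_z$, which gives (i); the prescribed conditioning rule is exactly what is required in (ii); and (iii) is the defining property of $\widehat\mu^z_1$ transported across the shift.

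The main technical care lies in turning the pointwise density minorization of Lemma \ref{l_lbt} into the measure-theoretic domination $\mu^z\geq 2\delta\,\widehat\mu^z_1$ on path space: one has to disintegrate the law of the killed diffusion over its endpoint, identify the conditional path distribution with a well-defined bridge measure (independent of the choice of sub-density being extracted), and compare to the product of the uniform density on $B^z$ with that same bridge. Measurability of the kernels in $(z,\omega)$ and the consistency needed for Ionescu--Tulcea are then standard consequences of the continuity of $p_{\lambda,\omega,U^z}$ and of the strong Markov property of $Z$.
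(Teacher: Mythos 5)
The paper gives no proof of this proposition, deferring entirely to \cite{kn:LS}; your Nummelin-type splitting is precisely the construction used there (and in \cite{kn:GMP}), and it is correct. The one-step domination $\mu^z\ge 2\delta\,\widehat{\mu}^z_1$ extracted from the density minorization of Lemma \ref{l_lbt}, the resulting mixture identity $\mu^z=\delta\,\widehat{\mu}^z_1+(1-\delta)\,\widehat{\mu}^z_0$, and the Ionescu--Tulcea assembly of the block kernels deliver (i)--(iii) exactly as you describe.
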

We refer to \cite{kn:LS} for the proof.

\begin{rmk}\label{r_mark-pro}
As a consequence of Proposition \ref{p_reg_ti},
under  $\widehat P^{\l, \o}_z $,
conditioned on
$\FF_{\l^{-2}m} \times {\mathcal J}_{(m-1)}$, \ $Z\circ \theta^\l_m$ has the same law as $Z$ under
$\widehat{P}^{\l, \o}_{Z(\l^{-2}m)} $. 
\end{rmk}

\bigskip

We will now provide the construction of the sequence of regeneration times.
This construction is algorithmic i.e., in the next paragraph, we describe an algorithm than eventually stops
after a certain number of steps, here denoted with $K$, and delivers the value of the first regeneration time $\tau_1^\lambda$.
The algorithm depends on the choice of the parameter $a$ that we set equal to $3\lambda R(\lambda)$. Its input is a trajectory.

First we introduce a sequence of random times $V^\lambda_k(a)$ when the process $e_1\cdot X(s)$ reaches a local maximum
within a variation of $R(\l)$. If we sampled the trajectory at the times $V^\lambda_k(a)$, we would see increments of order $\lambda^{-1}$ in the direction $e_1$.

Because the coupling in Proposition \ref{p_reg_ti} uses discrete times, we modify the times $V^\lambda_k(a)$ by taking their integer part and thus define the sequence $\tilde{N}^\lambda_k(a)$. From the sequence $\tilde{N}^\lambda_k(a)$, we extract
$N^\lambda_1(a)$ for which the corresponding random variables $Y_k$ takes the value $1$ for the first time. Remember that, when a Bernoulli variable $Y_k$ takes the value $1$, then the diffusion performs a `deterministic' jump of size $\lambda^{-1}$ in direction $e_1$ and in time $\lambda^{-2}$. We look at the process right after the jump.

At time $S_1^\lambda=N_1^\lambda(a)+\lambda^{-2}$, we ask wether the diffusion is going to backtrack in direction $e_1$ by a distance larger than $R(\lambda)$. If the answer is `yes', we then wait until the diffusion backtracks - this defines the backtracking time $D$ - and start the algorithm again: we then get a second random time $S_2^\lambda$; ask if the diffusion will backtrack again ...
The algorithm stops the first time we reach a time $S_k^\lambda$ after which the diffusion does not backtrack more than $R(\lambda)$.
The following definitions provide a rigourous description of the algorithm, including some further technical restrictions.

 Observe that the times $S_k^\lambda$ are stopping times. However, because it includes a non-backtracking condition, the regeneration time $\tau_1^\lambda$ itself is not a stopping time.


%

Let
\beq\label{Mdef}
M(t): = \sup\{e_1\cdot( X(s)-  X(0)): 0 \leq s \leq t\}.
\eeq
For $a> 0$, define the stopping times $V_k^\l(a), k \geq 1$, as follows. We define $T_L=\inf\{t\,: \, \tr{e_1}\,(X(t)-X(0))=L\}$, and define
\beq
V_0^\l(a) : =  T_{\l^{-1}a}, \quad V_{k+1}^\l(a) : = T_{M(\lceil V_k^\l(a)\rceil_\l)+ R(\l)}, \quad k \geq 1;
\eeq
here and later on $\lceil r\rceil_\l$ stands for the $\min\{n\in\mathbb \l^{-2}\mathbb Z\,:\, r\le n\}$.
Then define
\beq
\widetilde N_1^\l(a): =\inf\left\{\big\lceil V_k^\l(a)\big\rceil_\l: k\geq 0, \sup\limits_{s \in [V_k^\l(a), \lceil V_k^\l(a) \rceil_\l ]}\left|e_1\! \cdot \Big( X(s) - X\big(V^\l_k(a))\Big)\right| \leq \frac{ R(\l)}{2}\right\},
\eeq
\beq
\widetilde N^\l_{k+1}(a): = \widetilde N_1^\l(3\l R(\l))\circ \theta^\l_{\l^2\widetilde N_k^\l(a)} + \widetilde N_k^\l(a), \quad k\geq 1\, ,
\eeq
\beq
N_1^\l(a) : = \inf\left\{\widetilde N_k^\l(a): k \geq 1, Y_{\l^2\widetilde N_k^\l(a)} =1\right\}\, ,
\eeq
(we will see later that $\widetilde N_k^\l(a) < \infty$, for all $k$). The random times $\l^2\widetilde N_k^\l(a)$ are integer-valued and $\sup\limits_{s\leq \widetilde N_k^\l(a)}e_1\cdot (X(s) - X(\widetilde N_k^\l(a))) \leq R(\l)$.
We next define random times $S^\lambda_1, J^\lambda_1$ and $R^\lambda_1$ as follows.
\beq
S_1^\l: = N_1^\l(3\l R(\l)) +\l^{-2}, \quad J_1^\l : = S_1^\l+ T_{- R(\l)} \circ \theta^\l_{\l^2 S_1^\l}, \quad R_1^\l : = \lceil J_1^\l\rceil_\l = S_1^\l + D\circ \theta^\l_{\l^2S_1^\l} ,
\eeq
where
\beq\label{Ddef}
D: = \lceil T_{-R(\l)}\rceil_\l\,  .
\eeq
Now we proceed recursively:
\beq
N_{k+1}^\l = R_k^\l + N_1^\l(a_k)\circ \theta^\l_{\l^2R_k^\l} \quad \hbox{ with } a_k = \l\big(M(R_k^\l) - e_1 \cdot(X(R_k^\l)- X(0)) +R(\l)\big)
\eeq
and
$$
S_{k+1}^\l: = N_{k+1}^\l +\l^{-2}, \quad J^\l_{k+1} : = S^\l_{k+1}+ T_{R(\l)} \circ\theta^\l_{\l^2S^\l_{k+1}} ,
\quad R_{k+1}^\l : = \lceil J_{k+1}^\l\rceil_\l = S_{k+1}^\l + D\circ \theta^\l_{\l^2S^\l_{k+1}} \, .
$$
Note that for all $k$, the $\FF_t \times {\mathcal S}_{\l^2\lceil t\rceil_\l}$- stopping times $\l^2N^\l_k,\ \l^2S^\l_k$ and $\l^2R^\l_k$ are integer-valued (the value $+\infty$ is possible). By definition, we have $\l^{-2} \leq N^\l_1 \leq S^\l_1 \leq J^\l_1\leq R^\l_1 \leq N^\l_2 \leq S^\l_2 \leq J^\l_2 \leq R^\l_2 \leq N^\l_3 \ldots \leq \infty$.
The first regeneration time $\tau^\lambda_1$ is defined as
\beq
\tau^\lambda_1 : = \inf\{S_k^\l: S_k^\l < \infty, \, \, R_k^\l = \infty \} \leq \infty\, .
\eeq
Let
\begin{equation}\label{K_def}
K=\inf \{k\geq 1\,:\, S_k^\lambda<\infty \hbox{ and } R_k^\lambda=\infty\}.
\end{equation}
Then $\tau^\lambda_1=S_K^\lambda$.
By definition, $\l^2\tau^\lambda_1$ is integer-valued and $\tau^\lambda_1 \geq 2\l^{-2}$ (since $N_1^\l \geq \l^{-2}$). We see that on the
event $\tau^\lambda_1 < \infty$ it holds
$$
e_1\cdot X(s) \leq e_1\cdot X(\tau^\lambda_1-\l^{-2})+ R(\l)\leq  e_1\cdot X(\tau^\lambda_1) - 7R(\l),\quad \hbox{for }s \leq \tau^\lambda_1 -\l^{-2}, \ \ \widehat P^{\l,\o}_z-\hbox{a.s.},
$$
see also Proposition \ref{p_reg_ti}, \ i.e. $(Z(s))_{s\leq \tau_1 -\l^{-2}}$ remains in the half-space
$\{z \in \R^{d+1}: \check e_1\cdot z \leq  \check e_1\cdot Z(\tau^\lambda_1) -7 R(\l)\}$. On the other hand, since the process $(e_1\cdot X(t))_{t \geq 0}$ never goes below $e_1 \cdot X(\tau^\lambda_1) - R(\l)$ after $\tau^\lambda_1$, $\widehat P^{\l,\o}_z$-a.s., $( Z(t))_{t > \tau^\lambda_1}$ remains in the half-space
$\{z\in \R^{d+1}: \check e_1 \cdot z \geq \check e_1\cdot Z(\tau^\lambda_1) - R(\l)\}$.

Let us define the annealed law
\beq\label{eq:annhat}
\widehat\P^{\l}_z[A]: =\int d\Q(\o)\int d\widehat P^{\l,\o}_z(w)\1_A(\o,w)\, .
\eeq
It has been proved in  \cite{kn:LS} (see also Proposition 5.5 in \cite{kn:GMP}  that $\tau^\lambda_1 < \infty$  $\widehat\P^{\l}_0$-a.s.

For $k\geq 2$ we recursively define
$$
\tau_k^\lambda =\tau_{k-1}^\lambda+ \tau_1^\lambda\circ\theta^\lambda_{\lambda^2\tau^\lambda_{k-1}}.
$$
Then $\tau_k^\lambda$ is finite $\widehat\P^{\l}_0$-a.s. for all $k$.
We set $\tau^\lambda_0=0$ for convenience.

The next theorem is Theorem 2.5 in \cite{kn:LS}.
\begin{theo}\label{thm:renew}
Under the measure $\widehat\P^{\l}_0$, the random variables \\
$\left(\left(Z(\tau^\lambda_{k +1})- Z(\tau^\lambda_k), \tau^\lambda_{k+1} - \tau^\lambda_k \right), k \geq 0 \right)$ are independent;  furthermore, for $k\geq 1$ they  are i.i.d.
 and have the same law as $\left(
Z(\tau^\lambda_{1}), \tau^\lambda_{1} \right)$ under $\widehat\P^{\l}_0[\,  \cdot\, | D=\infty]$.
\end{theo}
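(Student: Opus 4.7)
The plan is to reduce the claim to a single-step regeneration statement together with the recursive definition of $\tau_k^\lambda$. The heart of the matter is to show that, on the event $\{\tau_1^\lambda<\infty\}$, the pre-regeneration data $(Z(s))_{s\le\tau_1^\lambda}$ and the post-regeneration process $(Z(\tau_1^\lambda+s))_{s\ge0}$ can be decoupled so that the latter, conditionally on $Z(\tau_1^\lambda)=z$, has law $\widehat\P^\lambda_z[\,\cdot\,|D=\infty]$ shifted by $z$.

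First I would exploit the geometric sandwich built into the construction. Let $K$ be the a.s.\ finite index defined in \eqref{K_def}, so that $\tau_1^\lambda=S_K^\lambda$ and, by Proposition \ref{p_reg_ti}(iii) together with $Y_{\lambda^2N_K^\lambda}=1$, the trajectory on $[N_K^\lambda,\tau_1^\lambda]$ stays inside $U^{Z(N_K^\lambda)}$ and $Z(\tau_1^\lambda)$ is uniform on $B^{Z(N_K^\lambda)}$. By definition of $\tau_1^\lambda$, the pre-regeneration path lies in the halfspace $\HH^-:=\{\check e_1\!\cdot\! z\le\check e_1\!\cdot\! Z(\tau_1^\lambda)-7R(\lambda)\}$ and the post-regeneration path lies in $\HH^+:=\{\check e_1\!\cdot\! z\ge\check e_1\!\cdot\! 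Z(\tau_1^\lambda)-R(\lambda)\}$. These halfspaces are separated by a gap of at least $6R(\lambda)\ge 6R$; since $R(\lambda)\ge R_f$, any evaluation of $f(X(s).\omega)$ before $\tau_1^\lambda$ (resp.\ after) depends only on the environment in $\HH^-$ (resp.\ $\HH^+$), and Assumption 4 makes $\HH_{\HH^-}$ and $\HH_{\HH^+}$ independent.

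Second I would combine this spatial independence with the enhanced Markov property of Remark \ref{r_mark-pro} applied at the integer-valued stopping time $\lambda^2\tau_1^\lambda$, and with stationarity of $\mathbb Q$ under lattice translations, to identify the conditional law of the future given $\FF_{\tau_1^\lambda}\vee\mathcal J_{\lambda^2\tau_1^\lambda}$ and $Z(\tau_1^\lambda)=z$ as the law of $Z$ under $\widehat\P^\lambda_z$ restricted to the event that $Z(\cdot)$ never drops by $R(\lambda)$ in the $e_1$-direction, that is, conditioned on $D=\infty$. Translating by $-z$ then yields the claimed common law $\widehat\P^\lambda_0[\,\cdot\,|D=\infty]$. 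Iterating this one-step statement using $\tau_{k+1}^\lambda=\tau_k^\lambda+\tau_1^\lambda\circ\theta^\lambda_{\lambda^2\tau_k^\lambda}$ gives independence of all the increments and the i.i.d.\ property for $k\ge 1$; the $k=0$ term has a different marginal simply because the starting point is not preceded by a regeneration event.

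The main obstacle I anticipate is a careful measurability accounting in the second step: the event $\{\tau_1^\lambda<\infty\}$ and the random point $Z(\tau_1^\lambda)$ depend on the future of the path (through the requirement $R_K^\lambda=\infty$), so one cannot directly apply the strong Markov property at $\tau_1^\lambda$. The standard way around this, as in \cite{kn:LS}, is to decompose on $\{K=k\}$, apply the Markov property at the deterministic-modulo-$\lambda^{-2}$ time $\lambda^{-2}(N_k^\lambda+\lambda^{-2})$ where $Y_{\lambda^2N_k^\lambda}=1$, and then use Assumption 4 to recognize that the conditioning $\{R_k^\lambda=\infty\}$ only constrains the future and the environment in $\HH^+$, which is independent of everything in $\HH^-$. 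Once this is formalized, the renewal structure follows by a routine disintegration argument.
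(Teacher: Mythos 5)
The paper does not prove this theorem at all: it simply states ``The next theorem is Theorem 2.5 in \cite{kn:LS}'' and defers entirely to Shen's construction. Your plan is a faithful outline of exactly that standard argument --- the halfspace separation by a gap of $6R(\lambda)>R$ combined with {\bf Assumption 4}, the coupling of Proposition \ref{p_reg_ti} at the successful index $K$, and the decomposition on $\{K=k\}$ to circumvent the fact that $\lambda^2\tau_1^\lambda$ is not a stopping time (the same device the paper itself uses in the proof of Lemma \ref{l_bou_rt}) --- so it matches the approach the paper relies on, and I see no gap in the strategy.
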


Furthermore, we have the following
\begin{lm}\label{l_bou_rt}
Let $(H(m))_{m\geq 0}$ be a random process such that $H(m)$ is measurable with respect to $\mathcal{F}_{\lambda^{-2}m}\times\mathcal{J}_{m-1}$
for all $m$ and such that $\widehat{\mathbb E}^\lambda_0[|H(\lambda^2\tau^\lambda_1)|]<\infty$. Then
$$
\widehat{\mathbb E}^\lambda_0[H(\lambda^2\tau^\lambda_1)|D=\infty]=\sum_{k=1}^\infty \widehat{\mathbb E}^\lambda_0[H(\lambda^2 S_k^\lambda) \mathbf{1}_{\{S_k^\lambda<D\}}].
$$
\end{lm}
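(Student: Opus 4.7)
The plan is to decompose $\tau_1^\lambda$ as $S_K^\lambda$ with $K := \inf\{k\geq 1 : S_k^\lambda<\infty,\ R_k^\lambda=\infty\}$ and to treat each summand $\widehat{\mathbb E}^\lambda_0[H(\lambda^2 S_k^\lambda)\mathbf 1_{K=k,\,D=\infty}]$ by the strong Markov property combined with Assumption 4 on finite range of dependence. Since $N_k^\lambda$ for $k\geq 2$ is constructed by a shift by $R_{k-1}^\lambda$, the finiteness of $S_k^\lambda$ already forces $R_j^\lambda<\infty$ for all $j<k$; thus $\{K=k\}=\{S_k^\lambda<\infty,\,R_k^\lambda=\infty\}$.

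The first substantive step is to identify $\{K=k,\,D=\infty\}$ with $\{S_k^\lambda<D,\,R_k^\lambda=\infty\}$. By construction, $e_1\cdot X(S_k^\lambda)\geq e_1\cdot X(N_k^\lambda)+8R(\lambda)$ since $Z(S_k^\lambda)\in B^{Z(N_k^\lambda)}$, while $e_1\cdot X(N_k^\lambda)\geq 0$ because $N_k^\lambda$ is constructed beyond the hitting time of height $3R(\lambda)$; hence $e_1\cdot X(S_k^\lambda)\geq 7R(\lambda)$ on $\{S_k^\lambda<\infty\}$. On $\{R_k^\lambda=\infty\}$ the process after $S_k^\lambda$ then stays above $e_1\cdot X(S_k^\lambda)-R(\lambda)>0>-R(\lambda)$, so combined with $\{S_k^\lambda<D\}$ (the process stayed above $-R(\lambda)$ up to time $S_k^\lambda$) one concludes $\{D=\infty\}$. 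The converse is immediate. Summing in $k$ yields
\[
\widehat{\mathbb E}^\lambda_0\big[H(\lambda^2\tau_1^\lambda)\,\mathbf 1_{D=\infty}\big]=\sum_{k=1}^\infty \widehat{\mathbb E}^\lambda_0\big[H(\lambda^2 S_k^\lambda)\,\mathbf 1_{S_k^\lambda<D}\,\mathbf 1_{R_k^\lambda=\infty}\big].
\]

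The core step is to factor the probability $p:=\widehat\P^\lambda_0(D=\infty)$ out of each summand. Applying the strong Markov property of Remark \ref{r_mark-pro} at the stopping time $S_k^\lambda$, together with the translation covariance $X^{\lambda,\omega}_x=X^{\lambda,x.\omega}_0+x$, gives on $\{S_k^\lambda<\infty\}$
\[
\widehat E^{\lambda,\omega}_0\big[\mathbf 1_{R_k^\lambda=\infty}\,\big|\,\mathcal{F}_{S_k^\lambda}\times\mathcal{J}_{\lambda^2 S_k^\lambda-1}\big]=\widehat P^{\lambda,\omega}_{Z(S_k^\lambda)}(D=\infty)=\widehat P^{\lambda,X(S_k^\lambda).\omega}_0(D=\infty).
\]
As a function of $\omega$, this depends only on the environment in $\{y\in\mathbb R^d:y\cdot e_1\geq e_1\cdot X(S_k^\lambda)-R(\lambda)\}$. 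The construction confines the environment visited before $N_k^\lambda$ to $\{y\cdot e_1\leq e_1\cdot X(N_k^\lambda)+R(\lambda)\}$, at distance at least $6R(\lambda)\geq R$ from the "future" region; the residual $\omega$-dependence over the intermediate interval $[N_k^\lambda,S_k^\lambda]\subset U^{Z(N_k^\lambda)}$ is washed out by Proposition \ref{p_reg_ti}(iii), which asserts that, conditionally on $Z(N_k^\lambda)$ and $Y_{\lambda^2 N_k^\lambda}=1$, the endpoint $Z(S_k^\lambda)$ is uniform on $B^{Z(N_k^\lambda)}$ with a law not depending on $\omega$. Invoking Assumption 4 and the stationarity identity $\int\widehat P^{\lambda,\omega}_0(D=\infty)\,d\mathbb Q(\omega)=p$ then yields
\[
\widehat{\mathbb E}^\lambda_0\big[H(\lambda^2 S_k^\lambda)\,\mathbf 1_{S_k^\lambda<D}\,\mathbf 1_{R_k^\lambda=\infty}\big]=p\cdot\widehat{\mathbb E}^\lambda_0\big[H(\lambda^2 S_k^\lambda)\,\mathbf 1_{S_k^\lambda<D}\big].
\]

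Summing over $k$ and dividing by $p$ gives the claim. The main obstacle is the decoupling in the last display: the intermediate trajectory over $[N_k^\lambda,S_k^\lambda]$ uses environment in $U^{Z(N_k^\lambda)}$, which can overlap the region employed by the future process in the $e_1$-direction; the resolution, already at the heart of Theorem \ref{thm:renew} as proved in \cite{kn:LS}, is to use the coupling of Proposition \ref{p_reg_ti}(iii) to neutralize the dependence of this intermediate trajectory on $\omega$ at the endpoint level, thereby reducing the independence question to the clean separation between the pre-$N_k^\lambda$ past and the post-$S_k^\lambda$ future guaranteed by the $6R(\lambda)\geq R$ gap.
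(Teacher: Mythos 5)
Your proposal is correct and follows essentially the same route as the paper: decompose $\{\tau_1^\lambda=S_k^\lambda,\,D=\infty\}$ as $\{S_k^\lambda<D\}\cap\{R_k^\lambda=\infty\}$, apply the Markov property at $S_k^\lambda$ to isolate the future event $\{D=\infty\}$ started from $Z(S_k^\lambda)$, and use the $6R(\lambda)\geq R$ separation between the half-spaces together with Assumption~4 and stationarity to factor out $\widehat{\mathbb P}^\lambda_0(D=\infty)$. The only cosmetic difference is that the paper disintegrates over the value $z=Z(S_k^\lambda)$ and phrases the decoupling as independence of two $\sigma$-fields, whereas you condition on the full past and invoke the coupling of Proposition~\ref{p_reg_ti}(iii) to neutralize the intermediate segment; the mechanism is the same.
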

\begin{proof}
$$
\widehat{\mathbb E}_0^\lambda\big([H(\lambda^2\tau^\lambda_1)]{\bf 1}_{\{D=\infty\}}\big)
$$
$$
=\sum\limits_{k=1}^\infty\int
\widehat{E}_0^{\lambda,\omega}\big( [H(\lambda^2 S^\lambda_k)]){\bf 1}_{\{S^\lambda_k<\infty\}}{\bf 1}_{\{D\circ\theta^\lambda_{S^\lambda_k}=\infty\}}{\bf 1}_{\{D=\infty\}}\big)\,d\mathbb Q
$$
$$
=\sum\limits_{k=1}^\infty\int
\widehat{E}_0^{\lambda,\omega}\big( [H(\lambda^2 S^\lambda_k)]){\bf 1}_{\{S^\lambda_k<D\}}{\bf 1}_{\{D\circ\theta^\lambda_{S^\lambda_k}=\infty\}}\big)\,d\mathbb Q
$$
$$
=\sum\limits_{k=1}^\infty\int
\widehat{E}_0^{\lambda,\omega}\big( [H(\lambda^2S^\lambda_k)]
{\bf 1}_{\{S^\lambda_k<D\}}\widehat{E}_{Z(S_k^\lambda)}^{\lambda,\omega}{\bf 1}_{\{D=\infty\}}\big)\,d\mathbb Q;
$$
here, to justify the second equality, we have used the fact that if $S_k^\lambda<D$ and $D\circ\theta^\lambda_{S^\lambda_k}=\infty$, then $D=\infty$.
To justify the last equality we have used the fact that $\lambda^2S_k^\lambda{\bf 1}_{\{S_k^\lambda<D\}}$ is a stopping time with respect to the filtration
 $(\FF_{\l^{-2}m} \times {\mathcal J}_{(m-1)}\,,\,m\geq 0)$ and Remark \ref{r_mark-pro}.

For  given $\omega$ and $k$, let $\rho_k^{\lambda,\omega}$ be the law of $Z(S^\lambda_k)$ under $\widehat{P}^{\lambda,\omega}_0$.
Then
$$
\int\big(
\widehat{E}_0^{\lambda,\omega}\big( [H(\lambda^2S^\lambda_k)]
{\bf 1}_{\{S^\lambda_k<D\}}\widehat{E}_{Z(S_k^\lambda)}^{\lambda,\omega}{\bf 1}_{\{D=\infty\}}\big)\,d\mathbb Q
$$
$$
=
\int\Big(\widehat{E}^{\lambda,\omega}_0\left\{\widehat{E}^{\lambda,\omega}_0\left[
H(\lambda^2 S_k^\lambda){\bf 1}_{\{S_k^\lambda<D\}}\big|Z(S_k^\lambda)\right]
\widehat{E}^{\lambda,\omega}_{Z(S_k^\lambda)}\left[
{\bf 1}_{\{D=\infty\}}\right]\widehat{E}_{Z(S_k^\lambda)}^{\lambda,\omega}
\right\}\Big)\,d\mathbb Q
$$
$$
=
\int_\Omega\Big(\int_{\mathbb R^d} \rho_k^{\lambda,\omega}(dz)
\widehat{E}^{\lambda,\omega}_0\left[
H(\lambda^2 S_k^\lambda){\bf 1}_{\{S_k^\lambda<D\}}\big|Z(S_k^\lambda)=z\right]{E}_{z}
^{\lambda,\omega}{\bf 1}_{\{D=\infty\}}
\Big)\,d\mathbb Q
$$
$$
=
\int_{\mathbb R^d} \int_\Omega\Big(\rho_k^{\lambda,\omega}(dz)
\widehat{E}^{\lambda,\omega}_0\left[
H(\lambda^2 S_k^\lambda){\bf 1}_{\{S_k^\lambda<D\}}\big|Z(S_k^\lambda)=z\right]{E}_{z}
^{\lambda,\omega}{\bf 1}_{\{D=\infty\}}
\Big)\,d\mathbb Q.
$$
By the definition of $D$ and $S_k^\lambda$, the term ${E}_{z}
^{\lambda,\omega}{\bf 1}_{\{D=\infty\}}$  is measurable with respect to the $\sigma$-field generated by $\{\sigma(z'\cdot\omega)\,:\, z'\cdot e_1\geq z\cdot e_1-R(\lambda)\}$, and $ \rho_k^{\lambda,\omega}(dz)
\widehat{E}^{\lambda,\omega}_0\left[
H(\lambda^2 S_k^\lambda){\bf 1}_{\{S_k^\lambda<D\}}\big|Z(S_k^\lambda)=z\right]$ is  measurable with respect to the $\sigma$-field generated by $\{\sigma(z'\cdot\omega)\,:\, z'\cdot e_1\leq z\cdot e_1-8R(\lambda)\}$.
Due to {\bf Assumption 4}, these two terms are independent. Therefore,
$$
\int_{\mathbb R^d} \int_\Omega\Big(\rho_k^{\lambda,\omega}(dz)
\widehat{E}^{\lambda,\omega}_0\left[
H(\lambda^2 S_k^\lambda){\bf 1}_{\{S_k^\lambda<D\}}\big|Z(S_k^\lambda)=z\right]{E}_{z}
^{\lambda,\omega}{\bf 1}_{\{D=\infty\}}
\Big)\,d\mathbb Q
$$
$$
=
\int_{\mathbb R^d} \int_\Omega\Big(\rho_k^{\lambda,\omega}(dz)
\widehat{E}^{\lambda,\omega}_0\left[
H(\lambda^2 S_k^\lambda){\bf 1}_{\{S_k^\lambda<D\}}\big|Z(S_k^\lambda)=z\right]\Big)\,d\mathbb Q\ \int_\Omega\Big({E}_{z}
^{\lambda,\omega}{\bf 1}_{\{D=\infty\}}
\Big)\,d\mathbb Q.
$$
The term  $\mathbb E{E}_{z}
^{\lambda,\omega}{\bf 1}_{\{D=\infty\}}$ does not depend on $z$ and equals $\widehat{\mathbb P}^\lambda_0(D=\infty)$.
Thus the last term in the previous formula is equal to
$$
\widehat{\mathbb P}^\lambda_0(D=\infty)
\int \int\Big(\rho_k^{\lambda,\omega}(dz)
\widehat{E}^{\lambda,\omega}_0\left[
H(\lambda^2 S_k^\lambda){\bf 1}_{\{S_k^\lambda<D\}}\big|Z(S_k^\lambda)=z\right]\Big)\,d\mathbb Q
$$
$$
= \widehat{\mathbb P}^\lambda_0(D=\infty) \widehat{\mathbb E}^\lambda_0\big(H(\lambda^2 S_k^\lambda){\bf 1}_{\{S_k^\lambda<D\}}\big),
$$
which implies the desired relation.
\end{proof}

The next statement provides us with useful estimates for the regeneration times.
\begin{prop}\label{p_finitreg}
There exists a constant $C^\lambda_f>0$ such that
$$
\widehat{\mathbb E}^\lambda_0\big[e^{C^\lambda_f\tau^\lambda_1}\big]<\infty
\quad
\hbox{\rm and}\quad
\widehat{\mathbb E}^\lambda_0\big[e^{C^\lambda_f (e_1\cdot X(\tau_1^\lambda))}\big]<\infty.
$$
Moreover  for any $R_0$ there exists $C^\lambda>0$ such that
 we may choose $C_f^\lambda\geq C^\lambda$ for any $f$ such that $R_f\leq R_0$ and either $|f|\leq 1$ or $\Vert f\Vert_{H^{-1}_\infty(\Omega)}\leq 1$.
\end{prop}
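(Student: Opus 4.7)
The plan is to obtain both bounds by combining the geometric structure of the regeneration construction with quenched exit-time and Aronson estimates in the rescaled coordinates $(\tilde z, \tilde t) = (\lambda z, \lambda^2 t)$, in which the generator (\ref{gen_z}) has bounded coefficients independent of $\lambda$.

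First, I would establish the uniform lower bound
\[
\widehat{\mathbb P}^\lambda_z(D = \infty) \geq p_0 > 0
\]
for all starting $z$, all $\omega$, all $\lambda \in (0,1]$ and all $f$ within the class of Lemma \ref{l_lbt}, with $p_0$ depending only on dimension and the ellipticity constant. This is a halfspace non-return estimate for $\lambda Z^{\lambda,\omega}(\cdot/\lambda^2)$, and in the rescaled variables it follows from Aronson's lower bound applied to (\ref{gen_z}): the drift term $\lambda a^\omega(\lambda^{-1}\tilde x) e_1$ pushes ballistically in the $e_1$-direction with order-one speed in rescaled time.

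Next, I would exploit the geometric structure. Using the strong Markov property at each $S_k^\lambda$ together with the independence argument of Lemma \ref{l_bou_rt} (which relies on finite range of dependence), the previous step implies that the index $K$ of (\ref{K_def}) is stochastically dominated under $\widehat{\mathbb P}^\lambda_0$ by a geometric variable with parameter $p_0$. On the event $R_k^\lambda < \infty$ I would split
\[
S_{k+1}^\lambda - S_k^\lambda = (R_k^\lambda - S_k^\lambda) + (N_{k+1}^\lambda - R_k^\lambda) + \lambda^{-2}
\]
and show that each summand has exponential moments of rate at least $c\lambda^2$, uniformly in $\omega$, $k$ and $f$. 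The first summand is the hitting time $D\circ\theta$ of level $-R(\lambda)$, an $O(1)$ halfspace exit time on the rescaled scale for a process with ballistic drift, hence exponentially integrable. For $N_1^\lambda(a)$ I would decompose it as a geometric sum (with success rate $\delta^\lambda$ from Lemma \ref{l_lbt}) of attempts $\widetilde N_{j+1}^\lambda - \widetilde N_j^\lambda$, each consisting of a hitting time of a new level $R(\lambda)$ above the current maximum plus an $O(\lambda^{-2})$ discretization wait. Each is exponentially integrable on the rescaled scale, and a geometric sum of i.i.d.\ exponentially integrable variables remains exponentially integrable at a slightly reduced rate; iterating across the geometrically many $k$'s up to $K$ gives the bound on $\tau_1^\lambda$.

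For $e_1\cdot X(\tau_1^\lambda)$ I would use the same structure: the running maximum $M(\tau_1^\lambda)$ dominates $e_1\cdot X(\tau_1^\lambda)$ and, by construction, increases by at most $O(R(\lambda))$ per successful step $V_j^\lambda$ in the definition of $\widetilde N_k^\lambda$. Since the total number of such steps up to $\tau_1^\lambda = S_K^\lambda$ is a geometric sum of geometric variables, $M(\tau_1^\lambda) \leq R(\lambda) N$ for an exponentially integrable integer $N$, and $R(\lambda) \leq C\lambda^{-1}$ yields exponential integrability at rate of order $\lambda$. The main obstacle throughout is propagating the correct exponential rates (of order $\lambda^2$ for $\tau_1^\lambda$, of order $\lambda$ for the displacement) through the nested geometric constructions while keeping all constants uniform in $\omega$ and in $f$ from the class of Lemma \ref{l_lbt}; uniformity in $f$ ultimately reduces to the explicit bound $\delta_f^\lambda \geq \delta^\lambda$ of that lemma and to the fact that the Aronson estimates depend only on the ellipticity constant of $a$, not on the divergence-form coefficient $F$ representing $f$.
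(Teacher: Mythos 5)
Your reconstruction follows the same architecture as the argument the paper actually relies on, but the paper's own proof of the first claim is a two-line citation to \cite{kn:LS} (Theorem 4.9 and Corollary 4.10): the authors do not reprove the exponential integrability of $\tau^\lambda_1$ and $e_1\cdot X(\tau^\lambda_1)$, they only add the observation needed for the second claim, namely that once $R(\lambda)$ and the Bernoulli variables are fixed, the construction of $\tau_1^\lambda$ involves only the process $e_1\cdot X$, so the tail of $\tau^\lambda_1$ depends on $f$ only through $R_f$ and $\delta_f^\lambda$, and one concludes with the second claim of Lemma \ref{l_lbt}. Your reduction of the uniformity in $f$ to $\delta_f^\lambda\geq\delta^\lambda$ and to the fact that the relevant a priori estimates depend only on ellipticity and on $\Vert F\Vert_\infty\leq 2$ is exactly the paper's argument for that part. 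Note also that for the present proposition $\lambda$ is fixed, so the uniformity in $\lambda$ you aim for is not needed here (that is Proposition \ref{p_ert}); aiming for it does no harm.

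There is, however, a genuine gap in the way you source the key probabilistic inputs. The assertion that $\widehat{\mathbb P}^\lambda_z(D=\infty)\geq p_0>0$ ``follows from Aronson's lower bound applied to (\ref{gen_z})'' is not correct as stated: Aronson's estimates are finite-time heat-kernel bounds and yield control of the process on bounded space-time windows (this is how they are used in Lemma \ref{l_lbt} and Lemma \ref{lm:estimateAlambda}), but they do not by themselves give an infinite-horizon non-backtracking probability, nor the exponential decay of $\widehat{\mathbb P}(t\leq D<\infty)$ and of the tails of the level-hitting times $T_{M(\cdot)+R(\lambda)}$ that your geometric-sum bookkeeping feeds on. Those are precisely the ballisticity estimates of \cite{kn:LS} (Section 4) and \cite{kn:GMP} (Section 5), proved via slab-exit probabilities for the operator $\frac12\mathrm{div}(a\nabla)+\lambda ae_1\cdot\nabla$ using its symmetrizing measure $e^{2\lambda e_1\cdot x}dx$ and Dirichlet-form/capacity arguments (this is also where the quenched bounds $c_1\lambda t\leq E(X^{\lambda,\omega}_0(t)\cdot e_1)\leq c_2\lambda t$ quoted in the introduction come from). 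A second, smaller point: $R^\lambda_k-S^\lambda_k=D\circ\theta^\lambda_{\lambda^2 S^\lambda_k}$ is infinite with positive probability, so what you need is exponential integrability of $D\mathbf{1}_{\{D<\infty\}}$, i.e.\ the decay of $\widehat{\mathbb P}(t\leq D<\infty)$ just mentioned; and the increment of the running maximum per attempt is $R(\lambda)$ plus the overshoot/oscillation over the rounding interval of length $\leq\lambda^{-2}$ (including on failed attempts), so the clean bound $M(\tau^\lambda_1)\leq R(\lambda)N$ must be replaced by $R(\lambda)N$ plus a sum of oscillation terms, each with uniform exponential tails at scale $\lambda^{-1}$ by the Aronson-type bounds. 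With the ballisticity inputs properly supplied, the rest of your scheme (geometric domination of $K$ from (\ref{K_def}) via the renewal structure of Theorem \ref{thm:renew} and Lemma \ref{l_bou_rt}, and the nested geometric sums) is the standard and correct route.
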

\begin{proof}
The first claim of the Proposition is proved
in \cite{kn:LS}, Theorem 4.9 and Corollary 4.10.
As for the second claim observe from the construction of $\tau_1^\lambda$ that, once $R(\lambda)$ is chosen, and given the $Y_k$'s, the definition
of $\tau_1^\lambda$ only involves the process $e_1\cdot X$.  Therefore, the rate of decay of the distribution function of $\tau_1^\lambda$ depends on
$f$ only through $R_f$ and $\delta_f^\lambda$. Besides, the bigger $\delta_f^\lambda$, the faster this distribution function decays.
We conclude the proof with the second claim of Lemma \ref{l_lbt}.
\end{proof}

\subsection{Proof of Theorems \ref{t_lln} and  \ref{t_llnH-1}     }\label{ss_proof-t}

The law of $(Z(t))_{t \geq 0}$
under $\widehat P^{\l, \o}_0 $ is the law of $(Z_0^{\lambda,\omega}(t))_{t \geq 0}$ under $P$.
Therefore, under $\widehat P^{\l, \o}_0 $,  the last component of  $Z(\cdot)$ is a semimartingale of the form ${\tt W}^1_\cdot+{\tt A}_f(\cdot)$
where  ${\tt W}_\cdot^1$ is a Brownian motion and  the  law of ${\tt A}_f$ is the law of $A^{\lambda,\omega}_{0,f}$ under
$P$.

It follows from Theorem \ref{thm:renew} and Proposition \ref{p_finitreg} that
$$
\frac{1}{k}\tau^\lambda_k\longrightarrow \widehat{\mathbb E}_0^\lambda\big[\tau_1^\lambda|D=\infty\big] \ \ \hbox{as }k\to\infty\qquad
 \widehat{\mathbb P}_0^\lambda-{\textrm a.s.}
$$
and
$$
\frac{1}{k}\Big({\tt A}_f(\tau^\lambda_k)+{\tt W}^1_{\tau^\lambda_k}\Big)\longrightarrow
\widehat{\mathbb E}_0^\lambda\big[{\tt A}_f(\tau^\lambda_1)+{\tt W}^1_{\tau^\lambda_1}|D=\infty\big] \ \ \hbox{as }k\to\infty\qquad
 \widehat{\mathbb P}_0^\lambda-{\textrm a.s.}
$$
Since $k^{-1}{\tt W}^1_{\tau^\lambda_k}$ a.s. converges to zero, we derive from the previous relation that
\begin{equation}\label{eff_velo}
\frac{{\tt A}_f(\tau^\lambda_k)}{\tau_k^\lambda}\longrightarrow
\frac{\widehat{\mathbb E}_0^\lambda\big[{\tt A}_f(\tau^\lambda_1)+{\tt W}^1_{\tau^\lambda_1}|D=\infty\big]}{\widehat{\mathbb E}_0^\lambda\big[\tau^\lambda_1|D=\infty
\big] }\ \ \hbox{as }k\to\infty\qquad
 \widehat{\mathbb P}_0^\lambda-{\textrm a.s.}
\end{equation}
Let us show that the term $\widehat{\mathbb E}_0^\lambda\big[{\tt W}^1_{\tau^\lambda_1}|D=\infty\big]$
on the right-hand side of (\ref{eff_velo}) vanishes.  Since $Z$ and ${\tt A}_f$ are additive functionals of $Z$, then
 ${\tt W}^1$ is also an additive functional of $Z$. From the Markov property of ${P}^{\l, \o}_{Z(\l^{-2}m)}$, we get that the process $({\tt W}^1_{\lambda^{-2}m+t}-{\tt W}^1_{\lambda^{-2}m})_{t\geq 0}$  is a Brownian motion independent of  $\FF_{\l^{-2}m} \times {\mathcal J}_{(m-1)}$.
 Since ${\bf 1}_{\{S_k^\lambda<D\}}\lambda^2S_k^\lambda$ is a stopping time with respect to the filtration $(\FF_{\l^{-2}m} \times {\mathcal J}_{(m-1)})_{m\geq 0}$, we have
 $\widehat{\mathbb E}_0^\lambda[{\tt W}^1_{S_k^\lambda}{\bf 1}_{\{S_k^\lambda<D\}}]=0$ for all $k$.
 Combining this with Lemma \ref{l_bou_rt} yields that $\widehat{\mathbb E}_0^\lambda\big[{\tt W}^1_{\tau^\lambda_1}|D=\infty\big]$ vanishes.
Therefore, (\ref{eff_velo}) takes the form
\begin{equation}\label{eff_velo_mod}
\frac{{\tt A}_f(\tau^\lambda_k)}{\tau_k^\lambda}\longrightarrow
\frac{\widehat{\mathbb E}_0^\lambda\big[{\tt A}_f(\tau^\lambda_1)|D=\infty\big]}{\widehat{\mathbb E}_0^\lambda\big[\tau^\lambda_1|D=\infty
\big] }\ \ \hbox{as }k\to\infty\qquad
 \widehat{\mathbb P}_0^\lambda-{\textrm a.s.}
\end{equation}
We introduce the notation
\begin{equation}\label{nu_f-def}
\nu_\lambda(f)=\frac{\widehat{\mathbb E}_0^\lambda\big[{\tt A}_f(\tau^\lambda_1)|D=\infty\big]}{\widehat{\mathbb E}_0^\lambda\big[\tau^\lambda_1|D=\infty
\big] }= \frac{\widehat{\mathbb E}_0^\lambda\big[{\tt A}_f(\tau^\lambda_1)+{\tt W}^1_{\tau^\lambda_1}|D=\infty\big]}{\widehat{\mathbb E}_0^\lambda\big[\tau^\lambda_1|D=\infty
\big] }.
\end{equation}
Using  standard arguments based on Proposition \ref{p_finitreg} we can replace the limit along the sequence $\{\tau_k^\lambda\}$  in (\ref{eff_velo_mod})
with the limit with respect to $t$. Therefore, we conclude that $t^{-1} {\tt A}_f(t) $ a.s. converges to $\nu_\lambda(f)$.
This implies that $t^{-1}A_{0,f}^{\lambda,\omega}$ also converges to  $\nu_\lambda(f)$ as $t\to\infty$ for $\mathbb Q$ almost all $\omega$
and $P$-a.s.
This yields the statement of Theorem \ref{t_llnH-1}.

To complete the proof of Theorem \ref{t_lln}, it remains to show that $\nu_\lambda$ is a Borel probability measure on
$\Omega$.  By construction, $\nu_\lambda$ is a non-negative linear functional on the space of bounded local functions. For any such function $f$
we have $|A_{0,f}^{\lambda,\omega}(f)|\leq t\|f\|_{L^\infty(\Omega)}$. Therefore,  $|\nu_\lambda(f)|\leq\|f\|_{L^\infty(\Omega)}$.
It is obvious that  $\nu_\lambda(1)= 1$. The only property to be justified is the sigma-additivity of $\nu_\lambda$.
Let $R_0>0$ and let $(f_n)_{n\geq 1}$ be a sequence of functions which are measurable with respect to the $\sigma$-field generated by $\{\sigma(y.\omega)\,:\,|y|\leq R_0\}$ and such that $0\leq f_n\leq 1$ and $f_n(\omega)$ tends to zero for all $\omega$.
For all $T>0$ we have
$$
\widehat{\mathbb E}^\lambda_0[{\tt A}_{f_n}(\tau^\lambda_1)]\leq \widehat{\mathbb E}^\lambda_0[{\tt A}_{f_n}(T)]
+\widehat{\mathbb E}^\lambda_0[(\tau^\lambda_1){\bf 1}_{\{\tau^\lambda_1\geq T\}}].
$$
Clearly, for any $T>0$ we have $\widehat{\mathbb E}^\lambda_0[{\tt A}_{f_n}(T)]\to 0$, as $n\to\infty$. Besides, although  the law
of $\tau^\lambda_1$  depends on $f_n$,
due to Proposition \ref{p_finitreg}, $\widehat{\mathbb E}^\lambda_0[(\tau^\lambda_1){\bf 1}_{\{\tau^\lambda_1\geq T\}}]$ tends to
zero, as $T\to\infty$, uniformly in $n$.  This implies that $\widehat{\mathbb E}^\lambda_0[{\tt A}_{f_n}(\tau^\lambda_1)]$
converges to zero, and thus $\nu_\lambda(f_n)$ converges to zero, as $n\to\infty$. Therefore, $\nu_\lambda$ is a  probability Borel measure on the
$\sigma$-field generated by $\{\sigma(y.\omega)\,:\,|y|\leq R_0\}$. And since it holds true for any $R_0$, then $\nu_\lambda$ extends to the whole Borel $\sigma$-field
of $\Omega$.
\qed

\section{Fluctuation dissipation theorem}\label{ss_h-1}

In this Section we compute the derivative of the steady state as $\lambda\to0$. Our main tool is the description of the scaling limit
of regeneration times for small $\lambda$.

Everywhere in this Section {\bf Assumptions 1--4} are fulfilled.

\medskip
Recall the properties of $\bar\Gamma(f)$ from Lemma \ref{l_cova}.

\begin{theo}\label{t_sssl}
Let $f$ be local and belong to $H^{-1}_\infty(\Omega)$ .  Then,
the derivative of $\nu_\lambda(f)$ at $\lambda=0$ exists and is equal to $\bar\Gamma(f)$.
\end{theo}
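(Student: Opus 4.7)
The plan is to start from the explicit formula \eqref{nu_f-def}. As observed in the proof of Theorem \ref{t_lln}, $\widehat{\mathbb E}_0^\lambda[{\tt W}^1_{\tau^\lambda_1}|D=\infty]=0$, so the formula reduces to
$$
\nu_\lambda(f)=\frac{\widehat{\mathbb E}_0^\lambda[{\tt A}_f(\tau^\lambda_1)\,|\,D=\infty]}{\widehat{\mathbb E}_0^\lambda[\tau^\lambda_1\,|\,D=\infty]}.
$$
Multiplying numerator and denominator by $\lambda$ and $\lambda^2$ respectively turns the derivative question into the convergence, as $\lambda\to 0$, of
$$
\frac{\nu_\lambda(f)}{\lambda}=\frac{\widehat{\mathbb E}_0^\lambda[\lambda\,{\tt A}_f(\tau^\lambda_1)\,|\,D=\infty]}{\widehat{\mathbb E}_0^\lambda[\lambda^2\tau^\lambda_1\,|\,D=\infty]}.
$$

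The central tool will be the Continuity Lemma \ref{t_fop}, which gives the joint scaling limit on the regeneration scale of $X^{\lambda,\omega}_0$ and $A^{\lambda,\omega}_{0,f}$. Combined with Proposition \ref{p_cov}, it identifies the limit of $(\lambda X_0^{\lambda,\omega}(\lambda^{-2}\cdot),\lambda A_{0,f}^{\lambda,\omega}(\lambda^{-2}\cdot))$ as a $(d+1)$-dimensional Brownian motion with block-diagonal covariance $\mathrm{diag}(\Sigma,\Sigma(f))$ and constant drift $(\Sigma e_1,\bar\Gamma(f))$. Since the construction of $\tau^\lambda_1$ and of $\{D=\infty\}$ depends only on the spatial coordinate $e_1\cdot X$, I expect them to converge to analogous quantities $\tau^*_1$, $\{D^*=\infty\}$ defined via the limiting spatial Brownian motion with drift, and the event $\{D^*=\infty\}$ to have positive probability in the limit.

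For passing to the limit of the expectations, two uniform integrability ingredients are needed. Proposition \ref{p_finitreg} supplies a uniform-in-$\lambda$ exponential moment for the rescaled regeneration time $\lambda^2\tau^\lambda_1$. Lemma \ref{lm:estimateAlambda}, applied at the random time $t=\tau^\lambda_1$ and combined via H\"older's inequality with the exponential tail of $\lambda^2\tau^\lambda_1$, yields a uniform bound $\widehat{\mathbb E}_0^\lambda\bigl[|\lambda\,{\tt A}_f(\tau^\lambda_1)|^p\bigr]\leq C_p\|f\|_{H^{-1}_\infty(\Omega)}^p$ for some $p>1$. Once the limit has been taken, the key structural fact is that the limiting process $A^*_f(t)=\bar\Gamma(f)\,t+\sqrt{\Sigma(f)}\,B^*(t)$ involves a Brownian fluctuation $B^*$ which, by the vanishing off-diagonal block of the covariance in Proposition \ref{p_cov}, is \emph{independent} of the spatial limit driving $\tau^*_1$ and $\{D^*=\infty\}$. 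Optional stopping then gives $\mathbb E^*[A^*_f(\tau^*_1)\,|\,D^*=\infty]=\bar\Gamma(f)\,\mathbb E^*[\tau^*_1\,|\,D^*=\infty]$, and the ratio collapses to $\bar\Gamma(f)$.

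The main obstacle will be the Continuity Lemma \ref{t_fop} itself: upgrading the annealed functional convergence of Proposition \ref{p_cov} to one that is joint with the $\lambda$-dependent stopping time $\lambda^2\tau^\lambda_1$ and the non-local event $\{D=\infty\}$. Uniform tightness of $\lambda^2\tau^\lambda_1$ is provided by Proposition \ref{p_finitreg}, but one must identify the limit stopping rule from the $\lambda^{-2}$-order ladder construction of Section \ref{ss_reg_time} and verify that the conditioning on $\{D=\infty\}$ is preserved under the limit. The uniform lower bound on $\delta^\lambda$ in Lemma \ref{l_lbt} and the $H^{-1}_\infty$-valued control of Lemma \ref{lm:estimateAlambda} are precisely what make this possible for local $f\in H^{-1}_\infty(\Omega)$.
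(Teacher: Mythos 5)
Your proposal is correct and follows essentially the same route as the paper: rescale the regeneration formula \eqref{nu_f-def} and pass to the limit with the Continuity Lemma \ref{t_fop}, with uniform integrability supplied by the uniform-in-$\lambda$ moment bounds (note the relevant statement is Proposition \ref{p_ert} rather than the fixed-$\lambda$ Proposition \ref{p_finitreg}, together with Lemma \ref{l_transv_b}). The only cosmetic difference is the final identification of the limiting ratio: you use independence of the fluctuation part of $A^*_f$ from the spatial process plus optional stopping, whereas the paper reads it off as the a.s.\ limit of $Z_{d+1}(t)/t$ under $\mathcal P$ via the regeneration formula for the limiting Brownian motion with drift; both arguments are valid.
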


An immediate corollary of Theorem \ref{t_sssl} and Lemma \ref{lm:estimateAlambda} is the following version
of Theorem \ref{t_ssslIntro}:

\begin{cor} \label{fdt}
Let $f$  belong to ${\tilde H}^{-1}_\infty(\Omega)$ .  Then,
the derivative of $\nu_\lambda(f)$ at $\lambda=0$ exists and is equal to $\bar\Gamma(f)$.
\end{cor}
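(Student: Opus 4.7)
The plan is to deduce Corollary \ref{fdt} from Theorem \ref{t_sssl} by density, using the uniform estimate in Lemma \ref{lm:estimateAlambda} (equivalently Theorem \ref{theo:continuityIntro}) to transfer the derivative computation from local approximants to a general $f \in \tilde H^{-1}_\infty(\Omega)$, together with the continuity of $\bar\Gamma$ on $H^{-1}_\infty(\Omega)$.

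First, by the very definition of $\tilde H^{-1}_\infty(\Omega)$, I would pick a sequence $(f_n)_{n\geq 1}$ with $f_n = \mathrm{div}\,F_n$, each $F_n$ bounded and local, and $\|f - f_n\|_{H^{-1}_\infty(\Omega)} \to 0$. Locality of $F_n$ makes $f_n$ local in the sense used in Theorem \ref{t_sssl}, so that theorem applies to each $f_n$ and gives $\lambda^{-1}\nu_\lambda(f_n) \to \bar\Gamma(f_n)$ as $\lambda \to 0$.

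Next I would verify that $\bar\Gamma$ extends to a continuous linear functional on $H^{-1}_\infty(\Omega)$. By Lemma \ref{l_cova}, $\bar\Gamma(g) = -\Sigma(g, b\cdot e_1) = -2(g, b\cdot e_1)_{H^{-1}(\Omega)}$, hence $|\bar\Gamma(g)| \leq 2\|g\|_{H^{-1}(\Omega)}\|b\cdot e_1\|_{H^{-1}(\Omega)}$. The embedding $H^{-1}_\infty(\Omega) \hookrightarrow H^{-1}(\Omega)$ is continuous: if $g=\mathrm{div}\,G$ with $G\in L^\infty(\Omega)$, then for every $u\in\mathcal D_0$,
$$
\left|\int_\Omega g\,u\,d\mathbb Q\right| = \left|\int_\Omega G\!\cdot\! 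Du\,d\mathbb Q\right| \leq \|G\|_\infty\,\|Du\|_{L^2} \leq C_\varkappa\|G\|_\infty\,\mathcal E(u)^{1/2},
$$
by Assumption 3, so $\|g\|_{H^{-1}(\Omega)}\leq C_\varkappa\|g\|_{H^{-1}_\infty(\Omega)}$. Consequently $\bar\Gamma(f_n)\to \bar\Gamma(f)$.

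Finally I would conclude by a three-$\varepsilon$ estimate. Since $\nu_\lambda$ is a continuous linear functional on $\tilde H^{-1}_\infty(\Omega)$ (Corollary \ref{corollary}) and satisfies the bound of Theorem \ref{theo:continuityIntro}, applied to $f-f_n$,
$$
\frac{1}{\lambda}\bigl|\nu_\lambda(f)-\nu_\lambda(f_n)\bigr| = \frac{|\nu_\lambda(f-f_n)|}{\lambda} \leq {\tt C}_1\,\|f-f_n\|_{H^{-1}_\infty(\Omega)}.
$$
Given $\varepsilon>0$, I would first pick $n$ so that both ${\tt C}_1\|f-f_n\|_{H^{-1}_\infty(\Omega)}<\varepsilon/3$ and $|\bar\Gamma(f_n)-\bar\Gamma(f)|<\varepsilon/3$, and then pick $\lambda_0$ so that $|\lambda^{-1}\nu_\lambda(f_n)-\bar\Gamma(f_n)|<\varepsilon/3$ for all $\lambda\in(0,\lambda_0)$; the triangle inequality yields $\lambda^{-1}\nu_\lambda(f)\to \bar\Gamma(f)$. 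There is no serious obstacle in this argument: the machinery is already in place, and the only point that deserves attention is the continuity of $\bar\Gamma$ in the $H^{-1}_\infty$-norm, for which the uniform ellipticity of Assumption 3 is exactly what is needed.
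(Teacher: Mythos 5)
Your proof is correct and is precisely the density argument the paper intends when it calls Corollary \ref{fdt} an ``immediate corollary of Theorem \ref{t_sssl} and Lemma \ref{lm:estimateAlambda}'': approximate $f$ by local $f_n=\div F_n$, use Lemma \ref{lm:estimateAlambda} for the uniform bound $\lambda^{-1}|\nu_\lambda(f-f_n)|\le {\tt C}_1\|f-f_n\|_{H^{-1}_\infty(\Omega)}$, and pass to the limit. Your only addition --- checking via Lemma \ref{l_cova} and the continuous embedding $H^{-1}_\infty(\Omega)\hookrightarrow H^{-1}(\Omega)$ that $\bar\Gamma(f_n)\to\bar\Gamma(f)$ --- is a detail the paper leaves implicit, and you supply it correctly.
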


The proof will be divided into several steps which are detailed in the following subsections.

\subsection{Estimates for regeneration times}\label{ss_est_reg_t}

\begin{prop}\label{p_ert}
Under the conditions of Theorem \ref{t_sssl}
there exist  constants $C_1(f)>0$ and $\overline{C}(f)>0$ such that, for all $\lambda$ with $0< \lambda\leq 1$, we have
$$
\widehat{\mathbb E}^\lambda_0\big[e^{C_1(f)\lambda^2\tau^\lambda_1}\big]\leq\overline{C}(f)
\quad
\hbox{\rm and}\quad
\widehat{\mathbb E}^\lambda_0\big[e^{C_1(f)\lambda (e_1\cdot X(\tau_1^\lambda))}\big]]\leq\overline{C}(f).
$$
\end{prop}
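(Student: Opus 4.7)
The strategy is to exploit the natural scaling hidden in the construction of $\tau_1^\lambda$ in Section \ref{ss_reg_time}: after the change of variables $\tilde z = \lambda z$, $\tilde t = \lambda^2 t$ already used in the proof of Lemma \ref{l_lbt}, the generator \eqref{gen_z} has bounded and uniformly elliptic coefficients independently of $\lambda$. Consequently the Aronson estimates, the exit-time estimates and the ballistic displacement estimates from \cite{kn:GMP} are all uniform in $\lambda\in(0,1]$ once expressed in the scaled variables. The plan is to revisit each ingredient entering $\tau_1^\lambda=S_K^\lambda$ (with $K$ as in \eqref{K_def}) and verify that its exponential moments, measured in units of $\lambda^{-2}$ for time and of $R(\lambda)\sim\lambda^{-1}$ for displacement, are controlled uniformly in $\lambda$.

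First I would show that between any two consecutive attempts both the time increment $\lambda^2(R_{k+1}^\lambda-R_k^\lambda)$ and the displacement $\lambda\,e_1\!\cdot\!(X(R_{k+1}^\lambda)-X(R_k^\lambda))$ have exponential moments bounded uniformly in $\lambda$ and in $k$. Each attempt decomposes into: (i) a geometric number of sub-steps $V_j^\lambda\to V_{j+1}^\lambda$ until some $\widetilde N_j^\lambda$ is produced by the flat-interval condition; (ii) a further geometric number of $\widetilde N_j^\lambda$'s until the Bernoulli variable $Y_{\lambda^2\widetilde N_j^\lambda}$ equals one; and (iii) the additional $\lambda^{-2}$ plus $D=\lceil T_{-R(\lambda)}\rceil_\lambda$. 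In the scaled variables each $V_{j+1}^\lambda-V_j^\lambda$ corresponds to a forward displacement of one unit, hence has uniform exponential tails by the scaled ballistic/exit-time bound; the geometric number from the Bernoulli success is controlled because $\delta_f^\lambda$ in Lemma \ref{l_lbt} is uniformly positive in $\lambda\in(0,1]$ for fixed $f$; finally $D$ is an exit time in scaled variables and is estimated similarly.

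Next I would show that $K$ itself has a uniform geometric tail. By construction of $S_k^\lambda$ and $R_k^\lambda$, on $\{S_k^\lambda<\infty\}$ the conditional probability that $R_k^\lambda=\infty$ equals, up to measurability issues handled as in Lemma \ref{l_bou_rt}, the probability $\widehat{\mathbb P}^\lambda_0(D=\infty)$. In scaled variables this is the probability that a diffusion with uniformly bounded and uniformly elliptic coefficients and a nonvanishing drift in the $\tilde e_1$ direction never falls back by a unit distance, which is bounded below by some $p_0>0$ independent of $\lambda$; this is the quantitative version of Proposition 5.5 in \cite{kn:GMP}. Combining the two steps through the independence statement in Theorem \ref{thm:renew} yields both
\[
\widehat{\mathbb E}^\lambda_0\!\left[e^{C_1(f)\lambda^2\tau_1^\lambda}\right]\le\overline C(f),\qquad
\widehat{\mathbb E}^\lambda_0\!\left[e^{C_1(f)\lambda\,e_1\cdot X(\tau_1^\lambda)}\right]\le\overline C(f),
\]
for suitably small $C_1(f)$.

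The main obstacle is extracting $\lambda$-uniformity simultaneously from every ingredient. Proposition \ref{p_finitreg} already provides exponential moments but with constants $C_f^\lambda$ that a priori deteriorate as $\lambda\to 0$; the refinement required here amounts to checking that each probabilistic estimate (Aronson's lower bound, exit-time tails, lower bound on the drift contribution in $D$) survives as a $\lambda$-independent estimate after the scaling $\tilde z=\lambda z,\ \tilde t=\lambda^2 t$. Since the rescaled generator is uniformly elliptic with bounded coefficients, this is essentially careful bookkeeping built on top of the estimates already developed in \cite{kn:GMP}.
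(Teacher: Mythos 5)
Your proposal is correct and follows essentially the same route as the paper: the key step in both is the observation that after the scaling $\tilde z=\lambda z$, $\tilde t=\lambda^2 t$ the generator \eqref{gen_z} has uniformly elliptic, bounded coefficients, so Aronson's estimates and hence the coupling constant $\delta_f^\lambda$ are uniform in $\lambda$ (the paper's Lemma \ref{l_unif}). The only difference is that the paper then delegates the remaining bookkeeping (exponential tails of each attempt, geometric tail of the number of attempts) to Lemma 5.8 of \cite{kn:GMP}, whereas you sketch that bookkeeping explicitly.
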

\begin{rmk}
By the same arguments as in the proof of Proposition \ref{p_finitreg}, the constants $C_1(f)$ and $\overline{C}(f)$ can be chosen
to be the same for all functions $f$
such that $R_f\leq R_0$ and $\Vert f\Vert_{H^{-1}_\infty(\Omega)}\leq 1$.
\end{rmk}

\begin{proof}
We will need a version of Lemma \ref{l_lbt} uniform with respect to $\lambda\in(0,1)$.
\begin{lm}\label{l_unif}
Let $f$ be as in Theorem \ref{t_sssl}. Then there exists a constant $\delta_f>0$ such that estimate (\ref{bou_dens}) holds for all $\lambda\in (0,1)$ with $\delta_f^\lambda=\delta_f$.
\end{lm}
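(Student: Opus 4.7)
The plan is to reduce Lemma \ref{l_unif} to Lemma \ref{l_lbt} by performing the parabolic rescaling $\tilde z=\lambda z$, $\tilde t=\lambda^2 t$ already used in the proof of Lemma \ref{lm:estimateAlambda}. In these coordinates the time window $\lambda^{-2}$ becomes $1$ and the generator of $Z^{\lambda,\omega}$ takes the form \eqref{gen_z}: its symmetric principal part $\tfrac12\div_{\tilde x}(a^\omega(\lambda^{-1}\tilde x)\nabla_{\tilde x}\cdot)+\tfrac12\partial^2_{\tilde y}$ is uniformly elliptic thanks to {\bf Assumption 3}, while the drift $a^\omega(\lambda^{-1}\tilde x)e_1$ and the skew cross-coefficients $F^\omega(\lambda^{-1}\tilde x)$ are bounded by constants depending only on $\varkappa$ and on $\Vert F\Vert_\infty\le 2$ (after the normalization $\Vert f\Vert_{H^{-1}_\infty(\Omega)}\le 1$). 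Crucially, none of these bounds depends on $\lambda$.

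Next I observe that the rescaled domains $\tilde U^z$ and $\tilde B^z$ are balls of radii $6\lambda R(\lambda)$ and $\lambda R(\lambda)$ respectively. Since $R(\lambda)=\max\{R,R_f,1/\lambda\}\ge 1/\lambda$, one has the uniform two-sided bound $1\le\lambda R(\lambda)\le\max\{R,R_f,1\}$ for every $\lambda\in(0,1]$. Thus the rescaled domains sit inside a fixed ball of bounded radius while still containing a ball of radius $1$, uniformly in $\lambda$.

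Aronson's lower bound for the Dirichlet heat kernel of the rescaled divergence-form parabolic operator \cite{kn:aronson}, applied on the bounded domain $\tilde U^z$ at the fixed time $1$, then yields a constant $c=c(\varkappa,R,R_f,d)>0$, independent of $\lambda\in(0,1)$ and of $\omega$, such that $\tilde p_{\lambda,\omega,\tilde U^z}(1,\tilde z',\tilde z'')\ge c$ for all $\tilde z''\in\tilde B^z$. Undoing the scaling,
\[
p_{\lambda,\omega,U^z}(\lambda^{-2},z',z'')=\lambda^{d+1}\,\tilde p_{\lambda,\omega,\tilde U^z}(1,\lambda z',\lambda z'')\ge c\,\lambda^{d+1},
\]
and since $\lambda^{d+1}|B_{R(\lambda)}|=|B_{\lambda R(\lambda)}|\ge |B_1|$, the inequality \eqref{bou_dens} follows with $\delta_f:=c\,|B_1|/2$.

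The main obstacle is to ensure that the Aronson constant indeed depends only on the ellipticity of the symmetric principal part, on the $L^\infty$ bounds of the lower-order coefficients and cross-terms, and on the diameter of the domain, but not on any further regularity in $\omega$ or on $\lambda$. This is precisely the content of the estimates in \cite{kn:aronson} applied to the divergence-form operator described above: the cross terms $\div_{\tilde x}(F^\omega\partial_{\tilde y}\cdot)-\partial_{\tilde y}(F^\omega\nabla_{\tilde x}\cdot)$ involve only the bounded coefficients $F^\omega$, and the first-order drift $a^\omega e_1$ is bounded by $\varkappa^{-1}$. The very same argument applies verbatim when $f$ is bounded, with $f(x.\omega)\partial_y$ in place of the divergence-form cross terms.
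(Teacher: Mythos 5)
Your proof is correct and follows essentially the same route as the paper: rescale via $\tilde z=\lambda z$, $\tilde t=\lambda^2 t$ so the generator takes the divergence form \eqref{gen_z} with coefficients bounded uniformly in $\lambda$ and $\omega$, and invoke the Aronson lower bound on the (now fixed-size, since $1\le\lambda R(\lambda)\le\max\{R,R_f,1\}$) rescaled domain at time $1$. Your write-up is in fact somewhat more explicit than the paper's, which leaves the scaling of the domains and the Jacobian factor $\lambda^{d+1}$ implicit.
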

\begin{proof}
We recall that the process
$$
Z_z^{\lambda,\omega}(t)=\big(X_x^{\lambda,\omega}(t),\, y+A_{x,f}^{\lambda,\omega}(t)+W^1_t\big)
$$
has a generator in divergence form, see (\ref{eq:reversible}).

In the variables $\tilde z=\lambda z$ and $\tilde t=\lambda^2 t$, this
generator reads, see (\ref{gen_z}):
$$\div_{\tilde x}(a^\omega(\lambda^{-1}\tilde x)\nabla_{\tilde x}q)+ a^\omega((\lambda^{-1}\tilde x)e_1\nabla_{\tilde x}q +
\div_{\tilde x}(F^\omega(\lambda^{-1}\tilde x) \partial_{\tilde y} q)-\partial_{\tilde y}(F^\omega(\lambda^{-1}{\tilde x})\nabla_{\tilde x} q)+ \partial^2_{{\tilde y}}q.
$$
Since $F$ is bounded, then for the corresponding parabolic operator, the Aronson estimates (see \cite{kn:aronson}) hold uniformly in $\lambda$ and in $F$ on any finite time interval and in any fixed  ball and for almost all $\omega$.  This implies that in the  statement of Lemma \ref{l_lbt} we can choose $\delta^\lambda_f$ independent of $\lambda$.
\end{proof}

Turning back to the proof of Proposition \ref{p_ert}, due to Lemma \ref{l_unif}, in the construction of $\tau_1^\lambda$, we can choose
the same Bernoulli random variables $(Y_k)_{k\geq 0}$ for all $\lambda \in(0,1]$. Given the sequence $(Y_k\,:\,k\geq 0)$ and the trajectory $e_1\cdot X(\cdot)$,
the definition of $\tau_1^\lambda$ in Section \ref{ss_reg_time} coincides with the definition of the regeneration time $\tau_1$ in \cite{kn:GMP}  (Notice that
the notation $e_1\cdot X(\cdot)$ and $\widehat{P}^{\lambda,\omega}_0$ are used for the same objects both here and in \cite{kn:GMP}).
We read from Lemma 5.8 and its proof in \cite{kn:GMP} that
$$
\sup\limits_\omega\sup\limits_{0<\lambda\leq 1}\widehat{ E}^{\lambda,\omega}_0\big[e^{C_1(f)\lambda (e_1\cdot X(\tau_1^\lambda))}\big]<\infty,
$$
and
$$
\sup\limits_\omega\sup\limits_{0<\lambda\leq 1}\widehat{ E}^{\lambda,\omega}_0\big[e^{C_1(f)\lambda^2\tau^\lambda_1}\big]<\infty.
$$
These estimates clearly imply the estimates stated in the Proposition.
\end{proof}

\begin{lm}\label{l_transv_b}
Under the conditions of Theorem \ref{t_sssl}
there exist  constants $C_1(f)>0$ and $\overline{C}(f)>0$ such that, for all $\lambda\leq 1$,
\begin{equation}\label{ogo}
\widehat{\mathbb E}^\lambda_0\Big[\exp\Big({C_1(f)\lambda\sup\limits_{0\leq s\leq \tau_1^\lambda} | Z(s)|}\Big)\Big]\leq\overline{C}(f).
\end{equation}
\end{lm}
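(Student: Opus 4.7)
\textbf{Proof plan for Lemma \ref{l_transv_b}.} The idea is to rescale space-time and then split the event according to the size of the regeneration time. Set $\widetilde Z^\lambda(t):=\lambda Z(t/\lambda^2)$, and for $r>0$ let
\[
\widetilde T_r:=\inf\{t\geq 0\,:\,|\widetilde Z^\lambda(t)|\geq r\}.
\]
Since $\{\sup_{s\leq\tau_1^\lambda}\lambda|Z(s)|\geq r\}=\{\widetilde T_r\leq \lambda^2\tau_1^\lambda\}$, for any parameter $\alpha>0$ to be chosen one has
\[
\widehat{\mathbb P}^\lambda_0\Bigl(\sup_{s\leq\tau_1^\lambda}\lambda|Z(s)|\geq r\Bigr)
\leq \widehat{\mathbb P}^\lambda_0(\widetilde T_r\leq \alpha r)+\widehat{\mathbb P}^\lambda_0(\lambda^2\tau_1^\lambda\geq \alpha r),
\]
and the plan is to show that both terms on the right decay exponentially in $r$, after which (\ref{ogo}) follows by integrating the tail.

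For the first probability I would repeat the argument already used in the proof of Lemma \ref{lm:estimateAlambda}. The rescaled generator (\ref{gen_z}) has coefficients bounded uniformly in $\lambda\in(0,1]$, in $\omega$, and---by Lemma \ref{l_unif}---in the bounded representative $F$ of $f$ whenever $R_f\leq R_0$ and $\|f\|_{H^{-1}_\infty(\Omega)}\leq 1$. The parabolic Aronson lower bound therefore provides an $\eps_0>0$, independent of $\lambda$, $\omega$ and $f$, for which $\widehat E^{\lambda,\omega}_0[e^{-\widetilde T_1}]\leq 1-\eps_0$. Iterating via the strong Markov property at successive exits of unit balls yields $\widehat E^{\lambda,\omega}_0[e^{-\widetilde T_r}]\leq (1-\eps_0)^{\lfloor r\rfloor-1}$, and a direct application of Markov's inequality gives
\[
\widehat{\mathbb P}^\lambda_0(\widetilde T_r\leq \alpha r)\leq e^{\alpha r}(1-\eps_0)^{\lfloor r\rfloor-1}.
\]
The second probability is controlled directly by Proposition \ref{p_ert}, which gives $\widehat{\mathbb P}^\lambda_0(\lambda^2\tau_1^\lambda\geq\alpha r)\leq\overline C(f)\,e^{-C_1(f)\alpha r}$.

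The final step is to choose $\alpha>0$ small enough that $e^{\alpha}(1-\eps_0)<1$, so that both terms decay exponentially in $r$ at a common rate $\gamma=\gamma(f)>0$. The conclusion (\ref{ogo}) then follows by writing the exponential moment as $1+c\int_0^\infty e^{cr}\widehat{\mathbb P}^\lambda_0(\sup_{s\leq\tau_1^\lambda}\lambda|Z(s)|\geq r)\,dr$ and integrating, provided we replace the constant $C_1(f)$ appearing in the statement by a strictly smaller positive constant lying below $\gamma$; this is permitted since the lemma only asserts existence of some positive constant. The one point deserving care, and the main obstacle, is the uniformity of the Aronson constant $\eps_0$ in $\lambda$ and in the perturbation $f$; this is exactly the content of Lemma \ref{l_unif}, so once it is invoked the rest of the argument is a routine Markov-inequality and tail-integration computation.
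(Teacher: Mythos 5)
Your argument is correct and follows essentially the same route as the paper's proof: the same rescaled exit times $\widetilde T_r$ controlled via the uniform Aronson lower bound (Lemma \ref{l_unif}) and the Markov property, the same splitting of the event $\{\sup_{s\leq\tau_1^\lambda}\lambda|Z(s)|\geq r\}$ into a fast-exit part and a large-$\tau_1^\lambda$ part handled by Proposition \ref{p_ert}, and the same tail integration at the end. Your introduction of the free splitting parameter $\alpha$ (the paper splits at $T/2$) is a slight refinement that makes the final choice of constants cleaner, but it is not a different method.
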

\begin{proof}
Denote $\widetilde T_r=\inf\{s>0\,:\, |\lambda Z(s/\lambda^2)|=r\}$.  Applying Aronson's lower bound (see \cite[Theorems 8 and 9]{kn:aronson}) to the parabolic equation with  generator given by (\ref{gen_z}), we obtain that there exists $\delta_0>0$ such that for all $\lambda\in(0,1)$ and all $\omega$
$$
P^{\lambda,\omega}_0(\widetilde T_1\geq 1)\geq \delta_0.
$$
Therefore,
$$
E^{\lambda,\omega}_0\big(e^{-\widetilde T_1}\big)\leq 1-\eps_0
$$
for some $\eps_0>0$. Applying the Markov property we deduce that
$$
E^{\lambda,\omega}_0\big(e^{-\widetilde T_r}\big)\leq (1-\eps_0)^{r-1}.
$$
Then
$$
E^{\lambda,\omega}_0\big[\exp\big(c\sup\limits_{0\leq s\leq t}{\lambda|Z(s/\lambda^2)|}\big)\big]=
\int_0^\infty ds\, e^s P^{\lambda,\omega}_0(e^{-\widetilde T_{(s/c)}}\geq e^{-t})$$
$$\leq
e^t\int_0^\infty ds\, e^s (1-\eps_0)^{(s/c)-1}=Ce^t
$$
provided we have chosen $c$ small enough  so that $ e^1(1-\eps_0)^{(1/c)}<1$.
Writing
$$
\widehat{P}^{\lambda,\omega}_0\big(\sup\limits_{0\leq s\leq \tau^\lambda_1}\lambda |Z(s)|\geq T\big)\leq
\widehat{P}^{\lambda,\omega}_0\big(\lambda^2\tau^\lambda_1\geq T/2\big)+
\widehat{P}^{\lambda,\omega}_0\big(\sup\limits_{0\leq s\leq T/2}\lambda |Z(s/\lambda^2)|\geq T\big),
$$
we deduce from Proposition \ref{p_ert} and Lemma \ref{l_unif} that \eqref{ogo} holds true for sufficiently small  $C_1(f)>0$ and some
$\overline{C}(f)>0$.
\end{proof}


\subsection{Scaling limit on regeneration scale}\label{ss_LRbis}


\begin{prop}\label{t_lr_bis}
Under the product measure $P\times \mathbb Q$, the process $\big(\lambda Z^{\lambda,\omega}_0(\lambda^{-2}t)\,;\,
t\geq 0\big)$  converges in law,
in $C([0,\infty), \mathbb R^{d+1})$,
towards a Brownian motion with constant drift.  The limit covariance matrix and the limit drift are given, respectively, by
\begin{equation}\label{ex_lim_c}
\widehat{\Sigma}=\left(
\begin{array}{cc}
\Sigma &0\\[2mm]
0&1+\Sigma(f)
\end{array}
\right)\,,\qquad\widehat{B}=
\left(
\begin{array}{c}
\Sigma e_1\\[2mm]
\overline{\Gamma}(f)
\end{array}
\right)\,.
\end{equation}
\end{prop}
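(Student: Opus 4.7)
The plan is to reduce Proposition \ref{t_lr_bis} to Proposition \ref{p_cov} by exploiting the fact that $W^1$ is independent of everything else. Recall the definition
$$
Z_0^{\lambda,\omega}(t)=\big(X_0^{\lambda,\omega}(t),\, A^{\lambda,\omega}_{0,f}(t)+W^1_t\big),
$$
so after the diffusive rescaling,
$$
\lambda Z_0^{\lambda,\omega}(\lambda^{-2}t)=\Big(\lambda X_0^{\lambda,\omega}(\lambda^{-2}t),\, \lambda A^{\lambda,\omega}_{0,f}(\lambda^{-2}t)+\lambda W^1_{\lambda^{-2}t}\Big).
$$
By Brownian scaling, the process $(\lambda W^1_{\lambda^{-2}t};t\geq 0)$ is again a standard one-dimensional Brownian motion, and it is independent of the pair $(X_0^{\lambda,\omega},A^{\lambda,\omega}_{0,f})$ since $W^1$ was introduced as a Brownian motion independent of $W$.

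First I would apply Proposition \ref{p_cov} to obtain that, under $P\times\mathbb Q$, the pair
$$
\Big(\lambda X_0^{\lambda,\omega}(\lambda^{-2}\cdot),\, \lambda A^{\lambda,\omega}_{0,f}(\lambda^{-2}\cdot)\Big)
$$
converges in law in $C([0,\infty),\mathbb R^{d+1})$ to a Brownian motion with covariance matrix $\mathrm{diag}(\Sigma,\Sigma(f))$ and drift $(\Sigma e_1,\overline\Gamma(f))$. Next, since $(\lambda W^1_{\lambda^{-2}\cdot})$ is a fixed-law Brownian motion independent of this pair, the joint law of the triple
$$
\Big(\lambda X_0^{\lambda,\omega}(\lambda^{-2}\cdot),\, \lambda A^{\lambda,\omega}_{0,f}(\lambda^{-2}\cdot),\,\lambda W^1_{\lambda^{-2}\cdot}\Big)
$$
converges to the product of the limit from Proposition \ref{p_cov} and an independent standard Brownian motion in $\mathbb R$. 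Applying the continuous mapping theorem (adding the second and third components) gives the joint convergence of $\lambda Z_0^{\lambda,\omega}(\lambda^{-2}\cdot)$ to a Gaussian process with constant drift.

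It remains to identify the limit parameters. The drift in the $(d+1)$-th component is unchanged at $\overline\Gamma(f)$ because $\lambda W^1_{\lambda^{-2}\cdot}$ has zero drift. For the covariance, the $X$-block stays $\Sigma$ and the off-diagonal blocks coupling $X$ with the last coordinate remain zero (the extra term contributes nothing because of independence, and the $A_{0,f}$ part is already decorrelated from $X$ by Proposition \ref{p_cov}). Finally, the variance of the last coordinate adds the variances of the two independent Gaussian pieces, yielding $\Sigma(f)+1$. This yields exactly $\widehat\Sigma$ and $\widehat B$ as in (\ref{ex_lim_c}). There is no real obstacle here since the hard work (joint invariance principle for $(X_0^{\lambda,\omega},A^{\lambda,\omega}_{0,f})$ in the Lebowitz--Rost scaling) is done by Proposition \ref{p_cov}; the only point to check carefully is the independence of $W^1$ from the rest, which is built into its definition on the product probability space.
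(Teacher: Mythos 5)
Your proposal is correct and follows essentially the same route as the paper: both invoke Proposition \ref{p_cov} for the pair $\big(\lambda X_0^{\lambda,\omega}(\lambda^{-2}\cdot),\lambda A^{\lambda,\omega}_{0,f}(\lambda^{-2}\cdot)\big)$, adjoin the independent rescaled Brownian motion $\lambda W^1_{\lambda^{-2}\cdot}$, and conclude by applying a continuous (linear) map to the triple, with the extra $1$ in the last diagonal entry of $\widehat\Sigma$ coming from the independent $W^1$. No gaps.
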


\begin{proof}
From Proposition \ref{p_cov} we get the convergence in law of  $\big(\lambda{X}_0^{\lambda,\omega}(\lambda^{-2}\cdot), \lambda{A}^{\lambda,\omega}_{0,f}(\lambda^{-2}\cdot)\big)$, under the annealed measure $P\times\mathbb Q$. Since $W^1$ is an independent Brownian motion, then the process $\big(\lambda{X}_0^{\lambda,\omega}(\lambda^{-2}\cdot), \lambda{A}^{\lambda,\omega}_{0,f}(\lambda^{-2}\cdot),\,
\lambda{W}^1(\lambda^{-2}\cdot)\big)$ also converges in law.  Since $\big(\lambda{Z}_0^{\lambda,\omega}(\lambda^{-2}\cdot)\big)$ is a linear function of  $\big(\lambda{X}_0^{\lambda,\omega}(\lambda^{-2}\cdot),$  $\lambda{A}^{\lambda,\omega}_{0,f}(\lambda^{-2}\cdot),\,
\lambda{W}^1(\lambda^{-2}\cdot)\big)$, then it also converges in law.

We already computed the limit covariance and drift  in Proposition \ref{p_cov}.
\end{proof}

\subsection{Continuity lemma}\label{ss_tau-proof}
 Let $\mathcal{P}$ be the law of  a Brownian motion with  covariance and drift given by (\ref{ex_lim_c})  on the canonical space $C([0,\infty); \mathbb R^{d+1})$,
 and  let $\mathcal{E}$ be
the corresponding expectation.
In the same way as in  Section \ref{ss_reg_time},
we introduce  the measure $\widehat{\mathcal{P}}$ defined on the extended path space that  includes the sequence of Bernoulli
random variables $(Y_k)_{k\geq 0}$.  Choosing $\lambda=1$, denote
$\check S_k=S^{\lambda=1}_k$, $\check\tau_1=\tau^{\lambda=1}_1$ and the corresponding random variable $\check D$.

Let $\phi=\phi(z,s,\lambda)$, $z\in\mathbb R^{d+1}$, $s\in\mathbb R$, $\lambda\in(0,1)$, be a continuous function  such that
$$
|\phi(z,s,\lambda)|\leq C(1+|z|+|s|)|^m
$$
for some $C>0$ and $m>0$.
\begin{lm}\label{t_fop}
The following continuity relation holds

$$
\lim\limits_{\lambda\to 0}\ \ \frac{\widehat{\mathbb E}_0^\lambda\big(\phi(\lambda Z(\tau_1^\lambda),
\lambda^2\tau_1^\lambda, \lambda)){\bf 1}_{\{D=\infty\}}\big)}{\widehat{\mathbb E}_0^\lambda\big(\lambda^2\tau_1^\lambda
{\bf 1}_{\{D=\infty\}}\big)}
=
\frac{\widehat{\mathcal{ E}}\big(\phi(Z(\check\tau_1),
\check\tau_1, 0)){\bf 1}_{\{\check D=\infty\}}\big)}{\widehat{\mathcal{E}}\big(\check\tau_1
{\bf 1}_{\{\check D=\infty\}}\big)}.
$$
\end{lm}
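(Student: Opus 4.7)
The plan is to exploit the scale invariance of the regeneration construction together with the weak convergence from Proposition \ref{t_lr_bis} and the uniform exponential moment bounds from Proposition \ref{p_ert} and Lemma \ref{l_transv_b}.

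First I would rewrite the left-hand side as the expectation of a functional of the rescaled process $\widetilde Z^\lambda(t)=\lambda Z(\lambda^{-2}t)$. For $\lambda$ small enough that $R(\lambda)=1/\lambda$, the construction of $\tau_1^\lambda$ in Section \ref{ss_reg_time} is exactly scale invariant: the balls $U^z$ and $B^z$ of radius $\sim 1/\lambda$ rescale into balls of radius $\sim 1$, the mesh $\lambda^{-2}\mathbb{Z}^+$ rescales into $\mathbb{Z}^+$, and all the level-crossing thresholds rescale consistently. Viewed as functionals of the path, $\lambda^2\tau_1^\lambda$ and $\lambda Z(\tau_1^\lambda)$ applied to $Z^{\lambda,\omega}_0$ therefore coincide with $\check\tau_1$ and $Z(\check\tau_1)$ applied to $\widetilde Z^\lambda$, and the event $\{D=\infty\}$ becomes the event $\{\check D=\infty\}$ for $\widetilde Z^\lambda$. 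By Lemma \ref{l_unif}, the Bernoulli parameter $\delta_f^\lambda$ can be chosen independent of $\lambda$, so the same Bernoulli sequence $(Y_k)$ is used in the coupling $\widehat{\mathbb P}_0^\lambda$ for all $\lambda$, and in $\widehat{\mathcal P}$.

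Second, by Proposition \ref{t_lr_bis}, the process $\widetilde Z^\lambda$ converges in law in $C([0,\infty),\mathbb R^{d+1})$ to the Brownian motion with covariance $\widehat\Sigma$ and drift $\widehat B$, whose law is $\mathcal P$. Since the $(Y_k)$ are an independent Bernoulli sequence with fixed parameter, the enlarged law of $(\widetilde Z^\lambda,(Y_k))$ under $\widehat{\mathbb P}_0^\lambda$ converges weakly to $\widehat{\mathcal P}$. I would then apply the continuous mapping theorem to the functional $(w,(y_k))\mapsto(w(\check\tau_1),\check\tau_1,\mathbf 1_{\{\check D=\infty\}})$. To justify this, I would check that this functional is $\widehat{\mathcal P}$-a.s.\ continuous: under $\widehat{\mathcal P}$ the first coordinate is a Brownian motion with strictly positive drift in direction $e_1$ (equal to $\Sigma e_1\cdot e_1>0$), so each hitting time $T_L$ is a.s.\ finite and a.s.\ continuous at the path, the events selecting each $\widetilde N_k$ are a.s.\ continuity sets, and the induction defining $\tau_1^\lambda=S_K^\lambda$ terminates a.s.\ at a finite index $K$ with good tails thanks to $\widehat{\mathcal P}(\check D=\infty)>0$. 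This yields the joint weak convergence
\begin{equation*}
\bigl(\lambda Z(\tau_1^\lambda),\,\lambda^2\tau_1^\lambda,\,\mathbf 1_{\{D=\infty\}}\bigr)\ \xrightarrow{d}\ \bigl(Z(\check\tau_1),\,\check\tau_1,\,\mathbf 1_{\{\check D=\infty\}}\bigr).
\end{equation*}

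Third, to promote this weak convergence to convergence of expectations of $\phi(\lambda Z(\tau_1^\lambda),\lambda^2\tau_1^\lambda,\lambda)\mathbf 1_{\{D=\infty\}}$, I would invoke uniform integrability. The polynomial bound on $\phi$ reduces the question to uniform control of the moments of $\lambda^2\tau_1^\lambda$ and of $\lambda\sup_{s\le\tau_1^\lambda}|Z(s)|$, which is exactly what Proposition \ref{p_ert} (exponential moment of $\lambda^2\tau_1^\lambda$) and Lemma \ref{l_transv_b} (exponential moment of $\lambda\sup_{s\le\tau_1^\lambda}|Z(s)|$) provide, uniformly in $\lambda\in(0,1]$. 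Convergence of the numerator follows. Applying the same argument with $\phi(z,s,\lambda)=s$ handles the denominator; this denominator has a strictly positive limit $\widehat{\mathcal E}(\check\tau_1\mathbf 1_{\{\check D=\infty\}})>0$ because $\check\tau_1\ge 2$ on $\{\check D=\infty\}$. Taking the ratio yields the claim.

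The main obstacle I anticipate is the a.s.\ continuity of the regeneration functional at $\widehat{\mathcal P}$. The construction involves ceilings $\lceil\cdot\rceil_\lambda$, nested hitting times, level-crossing comparisons, and, crucially, the tail event $\{\check D=\infty\}$, which is only lower semicontinuous in general. The positive drift of the limiting Brownian motion is what makes all these operations a.s.\ continuous: hitting times of strictly higher levels are a.s.\ continuous, the comparisons defining each $\widetilde N_k$ are strict on a full-measure set, and the event $\{\check D=\infty\}$ coincides up to a null set with the open event $\{\inf_{s\ge 0}e_1\cdot X(s)>-R(\lambda=1)\}$ which is an $\widehat{\mathcal P}$-continuity set by Cameron--Martin. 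Carrying this out carefully, and showing that the discrete-time ceilings do not spoil continuity in the limit (where the mesh stays fixed at $\lambda=1$ after rescaling), is the technical heart of the argument.
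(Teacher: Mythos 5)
Your overall strategy (rescale, invoke Proposition \ref{t_lr_bis}, then upgrade weak convergence to convergence of expectations via the uniform exponential moments of Proposition \ref{p_ert} and Lemma \ref{l_transv_b}) matches the paper's, but there is a genuine gap at the continuous-mapping step. You apply the mapping theorem directly to the functional $w\mapsto\big(w(\check\tau_1),\check\tau_1,\mathbf 1_{\{\check D=\infty\}}\big)$. Both $\check\tau_1$ (defined through the condition $R_k^\lambda=\infty$) and $\mathbf 1_{\{\check D=\infty\}}$ are \emph{tail} functionals of the path, and they are not $\widehat{\mathcal P}$-a.s.\ continuous for the topology of locally uniform convergence on $C([0,\infty),\mathbb R^{d+1})$, which is the topology in which Proposition \ref{t_lr_bis} gives convergence. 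Your proposed fix --- that $\{\check D=\infty\}$ agrees up to a null set with the ``open'' event $\{\inf_{s\geq0}e_1\cdot X(s)>-R\}$ --- does not work: that event is not open in the locally uniform topology (a sequence $w_n$ can agree with $w$ on $[0,n]$ and dip far below the level after time $n$, so $w_n\to w$ locally uniformly while the indicator stays $0$). No amount of regularity of the limiting drifted Brownian motion repairs this, because locally uniform convergence gives no control of the approximating paths at large times.

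The paper's proof circumvents exactly this obstacle with Lemma \ref{l_bou_rt}: using the independence supplied by {\bf Assumption 4} and the renewal structure, it rewrites
$\widehat{\mathbb E}_0^\lambda\big[\phi(\lambda Z(\tau_1^\lambda),\lambda^2\tau_1^\lambda,\lambda)\,\big|\,D=\infty\big]$
as the series $\sum_{k\geq1}\widehat{\mathbb E}_0^\lambda\big[\phi(\lambda Z(S_k^\lambda),\lambda^2S_k^\lambda,\lambda)\mathbf 1_{\{S_k^\lambda<D\}}\big]$, in which every term involves only the stopping time $S_k^\lambda$ and the event $\{S_k^\lambda<D\}=\{T_{-R(\lambda)}>S_k^\lambda\}$, i.e.\ finite-time functionals that \emph{are} a.s.\ continuous under $\widehat{\mathcal P}$ (this is where your correct observations about hitting times, ceilings and strict comparisons belong). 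Each term then converges by Proposition \ref{t_lr_bis} plus uniform integrability, and the tail of the series is controlled uniformly in $\lambda$ by Cauchy--Schwarz together with $\widehat{\mathbb P}_0^\lambda(S_k^\lambda<D)\leq\widehat{\mathbb P}_0^\lambda(\tau_1^\lambda\geq\lambda^{-2}k)$, which decays exponentially in $k$ by Proposition \ref{p_ert}. Without this decomposition (or an equivalent device trading the tail event for a sum of finite-time events), your argument does not go through; with it, the rest of your outline (uniform integrability, positivity of the limiting denominator) is correct.
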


\begin{proof}
By Lemma \ref{l_bou_rt} with $H(n)=\phi(\lambda Z(\lambda^{-2}n),n,\lambda))$ we get
\begin{equation}\label{tau_deco}
\widehat{\mathbb E}_0^\lambda\big(\phi(\lambda Z(\tau_1^\lambda),
\lambda^2\tau_1^\lambda, \lambda))\big|D=\infty\big)=
\sum\limits_{k=1}^\infty
\widehat{\mathbb E}_0^\lambda\big(\phi(\lambda Z(S_k^\lambda),
\lambda^2 S_k^\lambda, \lambda)) {\bf 1}_{\{S_k^\lambda<D\}} \big).
\end{equation}
For each $k$, the functions $\check S_k$, $Z(\check S_k)$ and ${\bf 1}_{\{\check S_k<D\}}$ are $\widehat{\mathcal{P}}
$-a.s. continuous functions on
path space. By Theorem \ref{t_lr_bis} and the continuity of $\phi$,  then the law of
$\phi(\lambda Z(S_k^\lambda),
\lambda^2 S_k^\lambda, \lambda)) {\bf 1}_{\{S_k^\lambda<D\}} $
under $\widehat{\mathbb P}_0^\lambda$ converges to the law of
$\phi(Z(\check S_k),
\check S_k, 0)) {\bf 1}_{\{\check S_k<\check D\}} $ under $\widehat{\mathcal{ E}}$.
 Combining the inequality $S_k^\lambda{\bf 1}_{\{S_k^\lambda<\infty\}}\leq\tau_1^\lambda$ with Lemma \ref{l_transv_b} and  Proposition \ref{p_ert}, we deduce uniform in $\lambda$ exponential tail estimates for $\lambda^2 S_k^\lambda$ and for $\lambda|Z(S_k^\lambda)|$. Under our standing growth condition on $\phi$, then $\phi(\lambda Z(S_k^\lambda),\lambda^2 S_k^\lambda, \lambda))$ satisfies uniform in $\lambda$ stretched exponential tail estimates. This
 implies that
$$
\widehat{\mathbb E}_0^\lambda\big(\phi(\lambda Z(S_k^\lambda),
\lambda^2 S_k^\lambda, \lambda))\big|D=\infty\big) \longrightarrow
\widehat{\mathcal{ E}}\big(\phi( Z(\check S_k),
\check S_k, 0)) {\bf 1}_{\{\check S_k<\check D\}} \big)
$$
for each $k$.  It remains to bound the tail of the series on the right-hand side of (\ref{tau_deco}). By the Cauchy-Schwartz inequality we have
$$
\left[\widehat{\mathbb E}_0^\lambda\big(\phi(\lambda Z(S_k^\lambda),
\lambda^2 S_k^\lambda, \lambda)) {\bf 1}_{\{S_k^\lambda<D\}} \big)\right]^2\leq\widehat{\mathbb E}_0^\lambda \left[\big(\phi(\lambda Z(S_k^\lambda),
\lambda^2 S_k^\lambda, \lambda)) {\bf 1}_{\{S_k^\lambda<D\}} \big)^2\right] \widehat{\mathbb P}_0^\lambda\big(S_k^\lambda<D \big).
$$
As in the preceding discussion,  Proposition \ref{p_ert} and Lemma \ref{l_transv_b} imply that the first term on the right-hand side is bounded
uniformly in $\lambda$ and $k$. On the other hand,
$$
\widehat{\mathbb P}_0^\lambda\big(S_k^\lambda<D \big)\leq \widehat{\mathbb P}_0^\lambda\big(S_k^\lambda<\infty \big)
\leq\widehat{\mathbb P}_0^\lambda\big(\tau_1^\lambda\geq\lambda^{-2}k \big).
$$
The term on the right-hand side here converges to zero uniformly in $\lambda$ at exponential speed. Therefore, we can pass to the limit
in (\ref{tau_deco}).
\end{proof}

\begin{proof}[Proof of Theorem \ref{t_sssl}]
Denote by $Z_{d+1}(t)$ the $(d+1)$-st component of $Z(t)$.   We read from (\ref{nu_f-def}) that
$$
\frac{1}{\lambda}\nu_\lambda(f)=\frac{\widehat{\mathbb E}_0^\lambda\big[{\tt A}_f(\tau^\lambda_1)+{\tt W}^1_{\tau_1^\lambda}|D=\infty\big]}{\lambda\widehat{\mathbb E}_0^\lambda\big[\tau^\lambda_1|D=\infty
\big] }=\frac{\lambda\widehat{\mathbb E}_0^\lambda\big[Z_{d+1}(\tau_1^\lambda)|D=\infty\big]}{\lambda^2\widehat{\mathbb E}_0^\lambda\big[\tau^\lambda_1|D=\infty
\big] }.
$$
By Lemma \ref{t_fop} we have
$$
\frac{\lambda\widehat{\mathbb E}_0^\lambda\big[Z_{d+1}(\tau_1^\lambda)|D=\infty\big]}{\lambda^2\widehat{\mathbb E}_0^\lambda\big[\tau^\lambda_1|D=\infty\big] }\ \mathop{\longrightarrow}\limits_{\lambda\to0}\  \frac{\widehat{\mathcal{E}}\big[Z_{d+1}(\check \tau_1)|\check D=\infty\big]}{\widehat{\mathcal{E}}\big[\check \tau_1|\check D=\infty
\big] }.
$$
As a special case of  (\ref{nu_f-def}) with a constant $\sigma$ and $\lambda=1$, we know that
$$
\frac{\widehat{\mathcal{E}}\big[Z_{d+1}(\check \tau_1)|\check D=\infty\big]}{\widehat{\mathcal{E}}\big[\check \tau_1|\check D=\infty\big] }=
\lim\limits_{t\to\infty}\frac{Z_{d+1}(t)}{t} \quad \mathcal{P}\hbox{-a.s.}
$$
Obviously, the last limit is equal to $\overline\Gamma(f)$, see (\ref{ex_lim_c}).
\end{proof}

\section{Continuity of variance and Einstein relation}\label{s_sec4}

We assume {\bf Assumptions 1--4} are fulfilled.

\subsection{Einstein relation}\label{ss_einrel}

In this section we obtain the Einstein relation as a consequence of the results of the previous Section.
This proof differs from that given in \cite{kn:GMP}.  We refer to \cite{kn:Einstein} for the original physical intuition.

It follows from Theorem \ref{thm:renew} and Proposition \ref{p_finitreg} taking $f=0$, that for any fixed $\lambda\in(0,1)$,  then $X$ satisfies the law of large numbers under $\widehat{\mathbb P}^{\lambda}_0$. Equivalently, there exists a vector $\ell(\lambda)\in\mathbb R^d$ such that
$$
\lim\limits_{t\to\infty}\frac{1}{t}X^{\lambda,\omega}_0(t)=\ell(\lambda)\qquad
$$
for $\mathbb Q$ almost all $\omega$ and $P$-a.s.

\begin{theo}[Einstein relation]\label{t_einst-rel}
As $\lambda\to 0$, then
$$
\frac{1}{\lambda}\ell(\lambda)\longrightarrow \Sigma e_1.
$$
\end{theo}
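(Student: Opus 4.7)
\medskip\noindent\textbf{Proof plan for Theorem \ref{t_einst-rel}.}
The plan is to set $f=0$ throughout Section \ref{s_sec2} and Section \ref{ss_h-1} and apply the Continuity Lemma \ref{t_fop} componentwise to the first $d$ coordinates of $Z$. With $f=0$, the process $Z^{\lambda,\omega}_0=(X^{\lambda,\omega}_0,W^1)$ is just the particle augmented by an independent Brownian motion, and all the regeneration machinery of Section \ref{ss_reg_time} applies. Proposition \ref{t_lr_bis} specialized to $f=0$ tells us that, under $P\times\mathbb{Q}$, the rescaled process $\lambda Z^{\lambda,\omega}_0(\lambda^{-2}\cdot)$ converges in law to a Brownian motion $Z$ with covariance $\widehat\Sigma=\mathrm{diag}(\Sigma,1)$ and drift $\widehat B=(\Sigma e_1,0)$; call the law of this Brownian motion (with its associated Bernoulli variables $Y_k$) $\widehat{\mathcal{P}}$, and let $\check\tau_1$, $\check D$ be the regeneration time and backtracking variable constructed from it at scale $\lambda=1$.

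First I would express $\ell(\lambda)$ via regeneration. Theorem \ref{thm:renew} gives that the increments $(X(\tau^\lambda_{k+1})-X(\tau^\lambda_k),\tau^\lambda_{k+1}-\tau^\lambda_k)$ are i.i.d.\ under $\widehat{\mathbb P}^\lambda_0[\,\cdot\,|D=\infty]$ for $k\geq 1$, and Proposition \ref{p_finitreg} gives the exponential integrability. The elementary renewal argument used in Section \ref{ss_proof-t} to derive (\ref{eff_velo_mod}) from (\ref{eff_velo}) therefore yields, for each coordinate $i\leq d$,
\begin{equation*}
\ell(\lambda)\cdot e_i \;=\; \frac{\widehat{\mathbb E}^\lambda_0\bigl[X(\tau^\lambda_1)\cdot e_i\,\big|\,D=\infty\bigr]}{\widehat{\mathbb E}^\lambda_0\bigl[\tau^\lambda_1\,\big|\,D=\infty\bigr]}
\;=\;\frac{\widehat{\mathbb E}^\lambda_0\bigl[\lambda X(\tau^\lambda_1)\cdot e_i\,\mathbf{1}_{\{D=\infty\}}\bigr]}{\widehat{\mathbb E}^\lambda_0\bigl[\lambda^2\tau^\lambda_1\,\mathbf{1}_{\{D=\infty\}}\bigr]}\cdot\lambda,
\end{equation*}
so that
\begin{equation*}
\frac{\ell(\lambda)\cdot e_i}{\lambda}\;=\;\frac{\widehat{\mathbb E}^\lambda_0\bigl[\lambda X(\tau^\lambda_1)\cdot e_i\,\mathbf{1}_{\{D=\infty\}}\bigr]}{\widehat{\mathbb E}^\lambda_0\bigl[\lambda^2\tau^\lambda_1\,\mathbf{1}_{\{D=\infty\}}\bigr]}.
\end{equation*}

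Next I would apply the Continuity Lemma \ref{t_fop} with $\phi(z,s,\lambda)=z\cdot e_i$ (a linear, hence admissible, function). The lemma directly gives
\begin{equation*}
\lim_{\lambda\to 0}\frac{\ell(\lambda)\cdot e_i}{\lambda}
\;=\;\frac{\widehat{\mathcal{E}}\bigl[Z(\check\tau_1)\cdot e_i\,\mathbf{1}_{\{\check D=\infty\}}\bigr]}{\widehat{\mathcal{E}}\bigl[\check\tau_1\,\mathbf{1}_{\{\check D=\infty\}}\bigr]}.
\end{equation*}
Finally, to identify the right-hand side, I would note that under $\widehat{\mathcal{P}}$ the limit process $Z$ is a Brownian motion with drift whose regeneration increments at scale $\lambda=1$ are i.i.d.\ under conditioning on $\{\check D=\infty\}$, exactly as in Theorem \ref{thm:renew}. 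Repeating the renewal argument of Section \ref{ss_proof-t} in reverse (this is the same step the authors use at the very end of the proof of Theorem \ref{t_sssl}) yields that the ratio above equals $\lim_{t\to\infty} Z(t)\cdot e_i/t$, $\mathcal{P}$-a.s., which is the $i$-th coordinate of the drift of $Z$, namely $\Sigma e_1\cdot e_i$. Since this holds for every $i$, we conclude $\lambda^{-1}\ell(\lambda)\to\Sigma e_1$.

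The only delicate point is the use of Lemma \ref{t_fop} with the unbounded function $\phi(z,s,\lambda)=z\cdot e_i$: this is covered by its polynomial-growth hypothesis and relies crucially on the uniform (in $\lambda$) exponential tail bound of Lemma \ref{l_transv_b} on $\sup_{0\leq s\leq\tau^\lambda_1}\lambda|Z(s)|$, which forces $\lambda X(\tau^\lambda_1)$ to be uniformly integrable. All other ingredients (renewal convergence, identification of the drift as a long-time average of a Brownian motion with drift) are essentially automatic.
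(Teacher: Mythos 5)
Your proposal is correct and follows essentially the same route as the paper: represent $\ell(\lambda)$ through the regeneration structure as $\widehat{\mathbb E}^\lambda_0[X(\tau^\lambda_1)\,|\,D=\infty]/\widehat{\mathbb E}^\lambda_0[\tau^\lambda_1\,|\,D=\infty]$, rescale, apply the Continuity Lemma \ref{t_fop} (with $f=0$ and $\phi$ linear in $z$), and read off the drift $\Sigma e_1$ from (\ref{ex_lim_c}). Your remarks on uniform integrability via Lemma \ref{l_transv_b} and on identifying the limiting ratio as the drift of the limit Brownian motion match the ingredients the paper relies on.
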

\begin{proof}
Using the regeneration structure as in the proof of (\ref{nu_f-def}), we can represent $\ell(\lambda)$ as follows
$$
\ell(\lambda)= \frac{\widehat{\mathbb E}_0^\lambda\big[X(\tau^\lambda_1)|D=\infty\big]}{\widehat{\mathbb E}_0^\lambda\big[\tau^\lambda_1|D=\infty
\big] }.
$$
Therefore,
$$
\frac{1}{\lambda}\ell(\lambda)= \frac{\lambda\widehat{\mathbb E}_0^\lambda\big[X(\tau^\lambda_1)|D=\infty\big]}{\lambda^2\widehat{\mathbb E}_0^\lambda\big[\tau^\lambda_1|D=\infty
\big] }.
$$
It follows from the continuity Lemma \ref{t_fop} that
$$
\frac{1}{\lambda}\ell(\lambda) \longrightarrow \frac{\widehat{\mathcal{E}}\big[X(\check \tau_1)|\check D=\infty\big]}{\widehat{\mathcal{E}}\big[\check \tau_1|\check D=\infty\big] }.
$$
The expression on the right-hand side is the drift of the $X$-components of the process $Z$ under $\mathcal{P}$.  By (\ref{ex_lim_c}), it is equal to $\Sigma e_1$.
\end{proof}

\subsection{Continuity of variance}\label{contvar}

This section deals with  the continuity of the effective variance of $X^{\lambda,\omega}_0$ as $\lambda\to0$.

It follows from Theorem \ref{thm:renew} and Proposition \ref{p_finitreg} taking $f=0$, that for any fixed $\lambda\in(0,1)$,  then $X$ satisfies the central limit theorem under $\widehat{\mathbb P}^{\lambda}_0$: there exists a matrix $\Sigma_\lambda$ such that the law of
$\frac{1}{\sqrt{t}}(X^{\lambda,\omega}_0(t)-\ell(\lambda)t)$ under the annealed measure $P\times\mathbb Q$ converges to the centered Gaussian law
with covariance matrix $\Sigma_\lambda$.

\begin{theo}[Continuity of variance]\label{t_cont-var}
As $\lambda\to 0$, we have
$$
\Sigma_\lambda\longrightarrow \Sigma.
$$
\end{theo}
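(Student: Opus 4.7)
The plan is to express $\Sigma_\lambda$ in terms of the regeneration data, then pass to the limit via the Continuity Lemma \ref{t_fop} (applied to $f=0$), and finally identify the limit with $\Sigma$ by applying the same formula to the scaling-limit Brownian motion.

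First I would establish an explicit renewal formula for $\Sigma_\lambda$. By Theorem \ref{thm:renew} the pairs $\bigl(X(\tau^\lambda_{k+1})-X(\tau^\lambda_k),\, \tau^\lambda_{k+1}-\tau^\lambda_k\bigr)$, $k\geq1$, are i.i.d.\ under $\widehat{\mathbb P}^\lambda_0$ with the common law of $\bigl(X(\tau_1^\lambda),\tau_1^\lambda\bigr)$ under $\widehat{\mathbb P}^\lambda_0[\,\cdot\,|D=\infty]$, and Proposition \ref{p_finitreg} guarantees exponential moments. The classical random-index CLT (the same tool already used to establish the CLT for $X^{\lambda,\omega}_0$) then gives
$$
\Sigma_\lambda \;=\; \frac{\mathrm{Cov}_{\widehat{\mathbb P}^\lambda_0}\!\bigl(X(\tau_1^\lambda)-\ell(\lambda)\tau_1^\lambda \,\big|\, D=\infty\bigr)}{\widehat{\mathbb E}^\lambda_0\bigl[\tau_1^\lambda \,\big|\, D=\infty\bigr]}
\;=\; \frac{\mathrm{Cov}_{\widehat{\mathbb P}^\lambda_0}\!\bigl(\lambda X(\tau_1^\lambda)-\tfrac{\ell(\lambda)}{\lambda}\lambda^2\tau_1^\lambda \,\big|\, D=\infty\bigr)}{\widehat{\mathbb E}^\lambda_0\bigl[\lambda^2\tau_1^\lambda \,\big|\, D=\infty\bigr]},
$$
the second equality being a rescaling that prepares the application of Lemma \ref{t_fop}.

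Next I would pass to the limit $\lambda\to 0$. Applied with $f=0$ (for which the uniform-in-$\lambda$ exponential estimates of Proposition \ref{p_ert} and Lemma \ref{l_transv_b} hold), Lemma \ref{t_fop} asserts that for any continuous $\phi$ of polynomial growth the ratios $\widehat{\mathbb E}^\lambda_0\bigl[\phi(\lambda Z(\tau_1^\lambda),\lambda^2\tau_1^\lambda)\,\mathbf 1_{D=\infty}\bigr]/\widehat{\mathbb E}^\lambda_0\bigl[\lambda^2\tau_1^\lambda\,\mathbf 1_{D=\infty}\bigr]$ converge to the corresponding quantity under $\widehat{\mathcal P}$. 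Taking $\phi\equiv 1$ also yields a positive limit of $\widehat{\mathbb P}^\lambda_0(D=\infty)/\widehat{\mathbb E}^\lambda_0\bigl[\lambda^2\tau_1^\lambda\,\mathbf 1_{D=\infty}\bigr]$, so by dividing, each conditional moment $\widehat{\mathbb E}^\lambda_0\bigl[\phi(\lambda Z(\tau_1^\lambda),\lambda^2\tau_1^\lambda)\,\big|\,D=\infty\bigr]$ converges to the corresponding moment of $\bigl(Z_{1\ldots d}(\check\tau_1),\check\tau_1\bigr)$ under $\widehat{\mathcal P}[\,\cdot\,|\check D=\infty]$. Combining this with the Einstein relation (Theorem \ref{t_einst-rel}) to replace $\ell(\lambda)/\lambda$ by $\Sigma e_1$, we obtain
$$
\Sigma_\lambda \;\longrightarrow\; \frac{\mathrm{Cov}_{\widehat{\mathcal P}}\!\bigl(Z_{1\ldots d}(\check\tau_1)-\Sigma e_1\,\check\tau_1 \,\big|\,\check D=\infty\bigr)}{\widehat{\mathcal E}\bigl[\check\tau_1 \,\big|\,\check D=\infty\bigr]}.
$$

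It remains to identify the right-hand side with $\Sigma$. By Proposition \ref{t_lr_bis} with $f=0$, the first $d$ coordinates of $Z$ under $\mathcal P$ form a Brownian motion with covariance matrix $\Sigma$ and drift $\Sigma e_1$; hence $Z_{1\ldots d}(t)-\Sigma e_1\,t$ is a centered Brownian motion with covariance $\Sigma$, whose variance per unit time is trivially $\Sigma$. Applying the random-index CLT of the first paragraph a second time, now to this limiting Brownian motion together with its i.i.d.\ regeneration increments under $\widehat{\mathcal P}[\,\cdot\,|\check D=\infty]$ (cf.\ Theorem \ref{thm:renew} at $\lambda=1$), identifies the displayed limit as precisely $\Sigma$.

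The main obstacle is the moment convergence in the second step: Lemma \ref{t_fop} delivers convergence only up to division by $\widehat{\mathbb E}^\lambda_0[\lambda^2\tau_1^\lambda\,\mathbf 1_{D=\infty}]$, and one must rewrite ratios carefully to extract convergence of the individual bilinear expectations $\widehat{\mathbb E}^\lambda_0[\lambda^2 X_i(\tau_1^\lambda)X_j(\tau_1^\lambda)\,|\,D=\infty]$, $\widehat{\mathbb E}^\lambda_0[\lambda^3 X_i(\tau_1^\lambda)\tau_1^\lambda\,|\,D=\infty]$ and $\widehat{\mathbb E}^\lambda_0[\lambda^4(\tau_1^\lambda)^2\,|\,D=\infty]$ that enter the covariance. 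The uniform-in-$\lambda$ exponential bounds on $\lambda^2\tau_1^\lambda$ (Proposition \ref{p_ert}) and on $\lambda\sup_{s\leq\tau_1^\lambda}|Z(s)|$ (Lemma \ref{l_transv_b}), both available because $f=0$ trivially satisfies the hypotheses ($R_f=0$, $\|0\|_{H^{-1}_\infty}=0$), supply the required uniform integrability and the positivity of the normalizing limit, legitimizing the inversion of the ratios.
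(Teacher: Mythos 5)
Your proposal is correct and follows essentially the same route as the paper: the renewal representation of $\Sigma_\lambda$, the rescaling, the Continuity Lemma \ref{t_fop} combined with the Einstein relation, and the identification of the limit via the regeneration formula applied to the limiting Brownian motion. The only (immaterial) difference is that you expand the squared increment into separate bilinear moments before applying Lemma \ref{t_fop}, whereas the paper feeds the single $\lambda$-dependent function $\phi(z,s,\lambda)=(e\cdot x-s\lambda^{-1}\ell(\lambda)\cdot e)^2$ directly into the lemma, using Theorem \ref{t_einst-rel} only to check that $\phi$ is continuous at $\lambda=0$.
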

\begin{proof}
Using the regeneration structure, as in the proof of (\ref{nu_f-def}), we can represent $\Sigma_\lambda$ as follows: for any $e\in\mathbb R^d$ then
$$
e\cdot\Sigma_\lambda e= \frac{\widehat{\mathbb E}_0^\lambda\big[(X(\tau^\lambda_1)\cdot e-\tau_1^\lambda \ell(\lambda)\cdot e)^2|D=\infty\big]}{\widehat{\mathbb E}_0^\lambda\big[\tau^\lambda_1|D=\infty
\big] }=
 \frac{\widehat{\mathbb E}_0^\lambda\big[(\lambda X(\tau^\lambda_1)\cdot e-\lambda^2\tau_1^\lambda \lambda^{-1}\ell(\lambda)\cdot e)^2|D=\infty\big]}{\widehat{\mathbb E}_0^\lambda\big[\lambda^2\tau^\lambda_1|D=\infty
\big] }.
$$
We apply the continuity Lemma \ref{t_fop} to the function $\phi(z,s,\lambda)=(e\cdot x-s\lambda^{-1}\ell(\lambda)\cdot e)^2$ for $\lambda\not=0$,
and  $\phi(z,s,0)=(e\cdot x-s\Sigma e_1\cdot e)^2$. Observe that according to the Theorem \ref{t_einst-rel} (Einstein relation), $\phi$ is continuous.
Then we get
$$
e\cdot\Sigma_\lambda e\longrightarrow
 \frac{\widehat{\mathcal{E}}\big[(X(\check \tau_1)\cdot e-\check\tau_1\Sigma e_1\cdot e)^2|\check D=\infty\big]}{\widehat{\mathcal{E}}\big[\check \tau_1|\check D=\infty\big] }.
$$
The expression on the right-hand side is the diffusion matrix  of the $X$-components of the process $Z$ under $\mathcal{P}$.  By (\ref{ex_lim_c}), it is equal to $\Sigma e\cdot e$.
\end{proof}

\bigskip
It follows from Theorem \ref{thm:renew} and Proposition \ref{p_finitreg}  that for any $f$, a local element in $H^{-1}_\infty(\O)$  and any fixed $\lambda\in(0,1)$, then $A_f$ satisfies the central limit theorem under $\widehat{\mathbb P}^{\lambda}_0$:
there exists a matrix $\Sigma_\lambda(f)$ such that the law of
$\frac{1}{\sqrt{t}}\big(A_f(t)-\nu_\lambda(f)t\big)$ under the annealed measure $P\times\mathbb Q$ converges to the centered Gaussian law
with covariance matrix $\Sigma_\lambda(f)$.

\begin{theo}
As $\lambda\to 0$, we have $\Sigma_\lambda(f)\longrightarrow \Sigma(f)$.
\end{theo}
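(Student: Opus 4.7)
The plan is to mimic the proof of Theorem \ref{t_cont-var}: derive a regeneration representation for $\Sigma_\lambda(f)$, rescale, and pass to the limit with the continuity Lemma \ref{t_fop}. The only new twist is that the $(d+1)$-st coordinate of the enlarged process $Z$ equals $A_f+W^1$ rather than $A_f$ itself, so the variance of $Z_{d+1}$ overshoots $\Sigma_\lambda(f)$ by exactly the contribution of the independent auxiliary Brownian motion $W^1$, which must be peeled off at the end.

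Theorem \ref{thm:renew} together with the exponential tail bounds of Proposition \ref{p_finitreg} yield the standard renewal-CLT representation
\begin{equation*}
\Sigma_\lambda(f)=\frac{\widehat{\mathbb E}^\lambda_0\big[(A_f(\tau^\lambda_1)-\tau^\lambda_1\nu_\lambda(f))^2\,\big|\,D=\infty\big]}{\widehat{\mathbb E}^\lambda_0\big[\tau^\lambda_1\,\big|\,D=\infty\big]}\,.
\end{equation*}
Under $\widehat P^{\lambda,\omega}_0$ the one-dimensional Brownian motion $W^1$ is independent of the pair $(X,(Y_k)_{k\geq 0})$, while both $\tau^\lambda_1$ and the event $\{D=\infty\}$ are measurable with respect to the filtration generated by $(X,(Y_k))$. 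Conditioning on $(X,(Y_k))$ first shows that $\widehat{\mathbb E}^\lambda_0\big[W^1_{\tau^\lambda_1}\big(A_f(\tau^\lambda_1)-\tau^\lambda_1\nu_\lambda(f)\big)\,\big|\,D=\infty\big]=0$ while $\widehat{\mathbb E}^\lambda_0\big[(W^1_{\tau^\lambda_1})^2\,\big|\,D=\infty\big]=\widehat{\mathbb E}^\lambda_0[\tau^\lambda_1\,|\,D=\infty]$. Substituting $Z_{d+1}(\tau^\lambda_1)=A_f(\tau^\lambda_1)+W^1_{\tau^\lambda_1}$ and expanding the square therefore gives the key identity
\begin{equation*}
\frac{\widehat{\mathbb E}^\lambda_0\big[(Z_{d+1}(\tau^\lambda_1)-\tau^\lambda_1\nu_\lambda(f))^2\,\big|\,D=\infty\big]}{\widehat{\mathbb E}^\lambda_0\big[\tau^\lambda_1\,\big|\,D=\infty\big]}=\Sigma_\lambda(f)+1\,.
\end{equation*}

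Rescaling, the left-hand side rewrites as
\begin{equation*}
\frac{\widehat{\mathbb E}^\lambda_0\big[\big(\lambda Z_{d+1}(\tau^\lambda_1)-\lambda^{-1}\nu_\lambda(f)\cdot\lambda^2\tau^\lambda_1\big)^2\,\big|\,D=\infty\big]}{\widehat{\mathbb E}^\lambda_0\big[\lambda^2\tau^\lambda_1\,\big|\,D=\infty\big]}\,,
\end{equation*}
to which we apply Lemma \ref{t_fop} with $\phi(z,s,\lambda)=(z_{d+1}-s\cdot\lambda^{-1}\nu_\lambda(f))^2$ on $\lambda\in(0,1]$, extended by $\phi(z,s,0)=(z_{d+1}-s\,\bar\Gamma(f))^2$. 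Theorem \ref{t_sssl} supplies exactly $\lambda^{-1}\nu_\lambda(f)\to\bar\Gamma(f)$, so $\phi$ is continuous and has (at worst) quadratic growth in $(z,s)$, meeting the admissibility class of Lemma \ref{t_fop}. The limit is the asymptotic variance of the $(d+1)$-st coordinate of $Z$ under $\mathcal P$ computed over one regeneration cycle, which by (\ref{ex_lim_c}) equals $1+\Sigma(f)$. Combining with the identity above yields $\Sigma_\lambda(f)+1\to 1+\Sigma(f)$, and cancellation gives $\Sigma_\lambda(f)\to\Sigma(f)$.

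The main delicate point I anticipate is justifying the vanishing of the $W^1$ cross term: it relies on the fact that the Bernoulli coupling of Proposition \ref{p_reg_ti} constructs $\tau^\lambda_1$ from $X$ and the Bernoulli variables only, so that $W^1$ remains genuinely independent. A secondary check is the admissibility of $\phi$ in Lemma \ref{t_fop}, but continuity follows from Theorem \ref{t_sssl} and the quadratic growth is immediate; the uniform exponential tails of $(\lambda^2\tau^\lambda_1,\lambda Z(\tau^\lambda_1))$ supplied by Proposition \ref{p_ert} and Lemma \ref{l_transv_b} are already baked into Lemma \ref{t_fop}.
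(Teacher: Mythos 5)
Your overall architecture is the intended one (the paper's proof of this theorem is literally ``the same as above'', i.e.\ as Theorem \ref{t_cont-var}): represent the cycle variance via the regeneration structure, feed the quadratic $\phi(z,s,\lambda)=(z_{d+1}-s\lambda^{-1}\nu_\lambda(f))^2$ into Lemma \ref{t_fop} using Theorem \ref{t_sssl} for continuity at $\lambda=0$, read off $1+\Sigma(f)$ from (\ref{ex_lim_c}), and peel off the unit contribution of the auxiliary Brownian motion $W^1$. The conclusion and the rescaling step are fine.

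The one step that does not hold as you justify it is the claim that, under $\widehat P^{\lambda,\omega}_0$, the Brownian motion $W^1$ is independent of the pair $(X,(Y_k)_{k\geq 0})$. The Bernoulli variables of Proposition \ref{p_reg_ti} are coupled to the \emph{full} $(d+1)$-dimensional process $Z$: on $\{Y_m=1\}$ the path is forced to stay in the ball $U^z\subset\mathbb R^{d+1}$ and to land uniformly on $B^z$ at time $\lambda^{-2}$, which constrains the last coordinate $Z_{d+1}=y+A_f+W^1$ and hence, given $X$, constrains $W^1$. So $(Y_k)$ and $W^1$ are correlated, and ``conditioning on $(X,(Y_k))$ first'' does not make the cross term vanish, nor does it give $\widehat{\mathbb E}^\lambda_0[(W^1_{\tau^\lambda_1})^2\,|\,D=\infty]=\widehat{\mathbb E}^\lambda_0[\tau^\lambda_1\,|\,D=\infty]$ for free. (This is exactly why, already for the first moment, Section \ref{ss_proof-t} does not argue by independence but runs the martingale $W^1$ through Lemma \ref{l_bou_rt} and optional stopping at the genuine stopping times $\lambda^2 S_k^\lambda\mathbf{1}_{\{S_k^\lambda<D\}}$.) The cross term is moreover awkward to treat by optional stopping, since $W^1_t\big(A_f(t)-t\nu_\lambda(f)\big)$ is not a martingale.

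The clean repair avoids the expansion altogether: both sides of your key identity are the asymptotic variance of $Z_{d+1}$. On one hand, Theorem \ref{thm:renew} and Proposition \ref{p_finitreg} give the renewal CLT $\frac{1}{\sqrt t}\big(Z_{d+1}(t)-\nu_\lambda(f)t\big)\Rightarrow N(0,s_\lambda)$ with
$$
s_\lambda=\frac{\widehat{\mathbb E}^\lambda_0\big[(Z_{d+1}(\tau^\lambda_1)-\tau^\lambda_1\nu_\lambda(f))^2\,\big|\,D=\infty\big]}{\widehat{\mathbb E}^\lambda_0\big[\tau^\lambda_1\,\big|\,D=\infty\big]}\,.
$$
On the other hand, $\frac{1}{\sqrt t}\big(Z_{d+1}(t)-\nu_\lambda(f)t\big)$ is a functional of the process $Z$ alone, whose law under $\widehat{\mathbb P}^\lambda_0$ coincides with its law under $\mathbb P^\lambda_0$ by Proposition \ref{p_reg_ti}(i); there $W^1$ \emph{is} independent of $A_f$, so the limit law is the convolution $N(0,\Sigma_\lambda(f))*N(0,1)$. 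Uniqueness of the limit gives $s_\lambda=\Sigma_\lambda(f)+1$, and the rest of your argument goes through unchanged.
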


The proof is the same as above. We leave the details to the reader.

\section{Appendix A}\label{sec:append}

Although our main interest in this paper are diffusions in a random environment, in order to better explain our results, we now briefly discuss the easier case of diffusions in a periodic environment.

In the periodic setting, the role of the dynamics of the environment viewed from the particle is now played by the projection of
the diffusion $X^\lambda_0$ on the torus.

In the case $\lambda=0$, we will get a stationary corrector $\chi_1$, see equation (\ref{eq:percorr}).

When $\lambda\not=0$, the process of the environment seen from the particle has an
absolutely continuous invariant measure - the steady state - whose density is given by equation (\ref{eq:per0}). The fluctuation-dissipation relation follows
from a direct comparison of both equations (\ref{eq:percorr}) and (\ref{eq:per0}). There is no need to go through the interpretation of the derivative as a correlation as we did in the random environment case.

Thus let  $a=(a(x), x\in\T)$ be a smooth field of symmetric positive definite matrices defined on the unit $d$-dimensional torus $\T=\R^d/\Z^d$.
Let $\l$ be a scalar, $e_1$ be a vector in $\R^d$ and define  $\hat{\l}=\l e_1$.
Let $(X^\l_x(t)\,;\,t\ge 0)$ be the solution of the stochastic differential equation:
\beqn\label{eq:persde}
dX^\l_x(t)=b(X^\l_x(t))dt +\l a(X^\l_x(t))e_1 +\sigma(X^\l_x(t))dW_t\,;\, \ \ X^\l_x(0)=x\,,
\eeqn
where we periodically extended $a$ to $\R^d$ and defined $b=\frac 12\div a$, $\sigma=\sqrt{a}$ and $(W_t\,;\, t\ge 0)$ is a $d$-dimensional Brownian motion defined on some probability space $(\WW,\FF,P)$.

Then $(X^\l_x(t)\,;\,  t\ge0, x\in\T)$ is a Markov process with generator $\LL^\l=\frac 12  \div(a\nabla)+\hat{\l}\cdot a\nabla$. Its projection on $\T$ is a Markov process with generator
${\dot\LL}^\l=\frac 12 \div(a\nabla)+{\hat \l}\cdot a\nabla$. It admits a unique absolutely invariant measure (steady state), say $\nu_\l$, with some density $f^\l$ with respect to the Lebesgue measure on $\T$
and $f^\l$ is a solution of the equation
\beqn\label{eq:per0}
\mathrm{div}(a(\nabla f^\l-2f^\l\hat{\l}))=0\,.
\eeqn

Observe that $f^0$ is constant.

Let us now derive a first order expansion of $f^\l$ similar to what we did in Section \ref{ss_h-1}.


Given the form of equation (\ref{eq:per0}), one observes that $f^\l$ smoothly depends on $\l$. Besides the successive derivatives of $f^\l$ (as a function of $\l$) can be expressed as solutions of
the partial differential equations obtained by differentiating (\ref{eq:per0}) with respect to $\l$. Let us write $f'$ for the first derivative of $f^\l$ at $\l=0$. Using the fact that $f^0=1$, we thus get that $f'$ solves the equation
\beqn\label{eq:percorr1}
\div(a(\nabla f'-2e_1))=0\,.
\eeqn
Define $\chi_1=-\frac 12 f'$. Then (\ref{eq:percorr1}) implies that $\chi_1$ is the solution of the equation
\beqn\label{eq:percorr}
\dot{\LL}^0\chi_1=-b\cdot e_1\,.
\eeqn
Equation (\ref{eq:percorr}) is the corrector equation for the operator $\LL^0$ in the direction $e_1$, see (\ref{eq:introcorrector}).

Thus we have indeed checked that the derivative at $\l=0$ of the steady state of the operator $\LL^\l$ (symmetric diffusion operator perturbed by a constant drift of strength $\l$ in the direction $e_1$)
coincides up to multiplication by a factor $-\frac 12$ with the corrector of the symmetric drift-less operator $\LL^0$ in the direction $e_1$.

\begin{rmk} The Einstein relation, in our context,
is the equality between the so-called mobility (the derivative at $\l=0$ of the effective drift) and
the diffusion matrix for the drift-less operator $\LL^0$, see \cite{kn:GMP}.

One may observe that the Einstein relation in the periodic setting directly follows from the discussion at the beginning of this introduction.
Indeed, 
one deduces from the ergodic theorem that the process $X^\l_0$ satisfies a law of large numbers: $\frac 1 t X^\l_0(t)$ $P$-almost surely converges to the effective drift
$$\ell(\l)=\int_\T (b(\dot{x})+\l a(\dot{x})e_1)\,f^\l(\dot{x}) d\dot{x}\,,$$
and therefore
\beqnn
\frac{d}{d\l}\ell(\l)\cdot e_1\vert_{\l=0}= \int_\T (e_1\cdot b(\dot{x})f'(\dot{x})+e_1\cdot a(\dot{x})e_1\, f^0(\dot{x}))\,d\dot{x}\,.
\eeqnn
Recall that $f^0=1$. So
$$\int_\T e_1\cdot ae_1\, f^0=\int_\T e_1\cdot ae_1\,.$$

We recall that $X_0(t)\cdot e_1$ satisfies the Central Limit Theorem with asymptotic variance
\beqn\label{eq:pervar'}
\Sigma_1=\int_\T  (e_1+\nabla\chi_1(\dot{x}))\cdot a(\dot{x})(e_1+\nabla\chi_1(\dot{x}))\, d\dot{x}\,.
\eeqn
On the one hand, integration by parts, equations (\ref{eq:percorr}) and the definition of $b$ imply that
\beqnn
\frac 12 \int_\T \nabla\chi_1\cdot a\nabla\chi_1=-\int_\T({\dot\LL^0}\chi_1) \chi_1=\int_\T b\cdot e_1\, \chi_1\\
=\frac 12\int_\T \div a\cdot e_1 \,\chi_1=-\frac 12\int_\T e_1\cdot a\nabla\chi_1\,,
\eeqnn
so that (\ref{eq:pervar'}) also reads
\beqn\label{eq:pervar''}
\Sigma_1=\int_\T(e_1\cdot ae_1-\nabla\chi_1\cdot a\nabla\chi_1)\,.
\eeqn

Use the equation satisfied by $f'$, see (\ref{eq:percorr1}) and (\ref{eq:percorr}), to get that
\beqnn
\int_\T e_1\cdot b \, f'=-2\int_\T e_1\cdot b \,\chi_1 =-\int_\T e_1\cdot\div a\, \chi_1\\
=\int_\T e_1\cdot a\nabla\chi_1=-\int_\T \nabla\chi_1\cdot a\nabla\chi_1\,.
\eeqnn
Thus we obtain that
\beqn\label{eq:pereinstein}
\frac{d}{d\l}\ell(\l)\cdot e_1\vert_{\l=0}= \int_\T(-\nabla\chi_1\cdot a\nabla\chi_1+e_1\cdot ae_1)=\Sigma_1\,.
\eeqn

\end{rmk}

\begin{rmk}

Let us further comment on the main differences between the periodic and random cases.

The first difficulty one would face if trying to follow the PDE approach in the random environment setting is the necessity to solve
equation (\ref{eq:per0}) (which is now an equation in the space of environments $\Omega$). It is because we do
not even know how to make sense of equation (\ref{eq:per0}) on $\Omega$, that we used the construction of regeneration times
from section \ref{ss_reg_time}. The price we pay is {\bf Assumption 4}.

A possible approach to equation (\ref{eq:per0}) could be to try to express its solution as a power series in $\lambda$. This may not be
sufficient to solve (\ref{eq:per0}) for all values of $\lambda$ but could be good enough to get a solution for small $\lambda$'s and,
provided the power series nicely converges, one would even get the fluctuation-dissipation relation by looking at the first term of the
expansion. Indeed the linear term of the expansion should be given by the equation for the corrector. However already the equation
that the quadratic term should satisfy is problematic.

To the best of our knowledge, this `expansion approach' to construct steady states only works for dynamics satisfying a spectral gap assumption.
Then all square-integrable functions belong to $H^{-1}$ and this opens the way to iterate the corrector equation to build the different terms
of the expansion.
This approach is detailed in \cite{kn:KomOll} where the authors prove a power series expansion of the density of $\nu_\lambda$ for small $\lambda$ and
obtain some version of the Einstein relation. Under the spectral gap assumption for the un-perturbed dynamics,
the perturbed dynamics with a small but positive $\lambda$ also satisfy
the spectral gap inequality  uniformly in $\lambda$. Therefore the time it takes for the process to equilibrate stays of order
$1$ as $\lambda$ tends to $0$. This is a major difference with the situation of diffusions in a random environment as discussed in the paper at hands
where the time it takes for the perturbed process to reach equilibrium - understood as the regeneration time - is of order $\lambda^{-2}$.
In other words, the approach through regeneration times shows that the fluctuation-dissipation relations are much much more general than
what purely analytic arguments based on computations of spectral gaps and perturbation methods would a priori suggest. How general they are
is an open problem.

\end{rmk}

\section{Appendix B: alternative proof of Theorem \ref{theo:continuityIntro} }\label{sec:appendB}

In this part of the paper, we give an alternative proof of Theorem \ref{theo:continuityIntro} based on a
spectral gap argument. Recall we are assuming {\bf Assumptions 1-3}.
We use the notation from Section \ref{ss_appl}.

In the sequel, we fix an element $f$ in the space $H^{-1}_\infty(\Omega)$. To obtain an explicit bound in the next lemma,
we introduce a new norm on $H^{-1}_\infty(\Omega)$, that we denote with $\Vert f\Vert_{\bar H^{-1}_\infty(\Omega)}$ and define as
$$
\Vert f\Vert_{\bar H^{-1}_\infty(\Omega)}=\min\{\Vert \sigma^{-1}F\Vert_\infty\,;\, \div F=f\}.
$$
Clearly, due to {\bf Assumption 2}, the two norms $\Vert f\Vert_{\bar H^{-1}_\infty(\Omega)}$ and
$\Vert f\Vert_{ H^{-1}_\infty(\Omega)}$ are equivalent.

Theorem  \ref{theo:continuityIntro} follows at once from the following Lemma:

\begin{lm}\label{lm:estimateAlambdabis}
Let $p\geq 1$. Then, for all $\lambda>0$
and $t>0$, we have
\begin{equation}\label{eq:upH-1bis}
\int E\big[ \vert A^{\lambda,\omega}_{0,f}(t)\vert^p\big]\, d\mathbb Q(\omega)
\leq
(4\lambda t)^p \big( \Vert\sigma\Vert_\infty^p+\frac{2\gamma_p}{\l\sqrt{t}} \big)\Vert f\Vert_{\bar H^{-1}_\infty}^p,
\end{equation}
where $\gamma_p=\int_0^\infty p s^{p-1} e^{-\frac{s^2}2}\, ds$.
\end{lm}

\begin{rmk} \label{rm:annealedvsquenched}
Lemma \ref{lm:estimateAlambdabis} should be compared to Lemma \ref{lm:estimateAlambda}.
On the one hand, estimate (\ref{eq:upH-1}) in Lemma \ref{lm:estimateAlambda} gives a uniform
upper bound that does not depend on $\omega$, unlike (\ref{eq:upH-1bis}) where we average
with respect to $\mathbb Q$.
On the other hand, the upper bound (\ref{eq:upH-1bis}) is more explicit than (\ref{eq:upH-1}). Observe
in particular that the only way the value of $\sigma$ enters in (\ref{eq:upH-1bis}) is through the value
of $\Vert\sigma\Vert_\infty$ and, implicitely, in the definition of the norm $\Vert f\Vert_{\bar H^{-1}_\infty(\Omega)}$.
\end{rmk}

\begin{proof}

Let us derive an upper bound on the Laplace transform $\int E\big[ e^{\eta A^{\lambda,\omega}_{0,f}(t)}\big]\, d\mathbb Q(\omega)$.

Using the Girsanov transform (\ref{eq:gir}), we get that
$$ \int E\big[ e^{\eta A^{\lambda,\omega}_{0,f}(t)}\big]\, d\mathbb Q(\omega)
=
\int E\big[e^{\eta A^{0,\omega}_{0,f}(t)}e^{\lambda \bB(t)-\frac{\lambda^2}2\langle\bB\rangle(t)}\big]\, d\mathbb Q(\omega)$$
$$\leq
\sqrt{\mathbb E_0\big[e^{2\eta A_f(t)}\big]}
\sqrt{\int E\big[e^{2\lambda \bB(t)-{\lambda^2}\langle\bB\rangle(t)}\big]\, d\mathbb Q(\omega)}.$$

We have
$$ E\big[e^{2\lambda \bB(t)-{\lambda^2}\langle\bB\rangle(t)}\big]
=  E\big[e^{2\lambda \bB(t)-2{\lambda^2}\langle\bB\rangle(t)+{\lambda^2}\langle\bB\rangle(t)}\big]$$
$$\leq e^{\lambda^2 \Vert\sigma\Vert_\infty^2\, t} E\big[e^{2\lambda \bB(t)-2{\lambda^2}\langle\bB\rangle(t)}\big]
=e^{\lambda^2 \Vert\sigma\Vert_\infty^2 \, t}.$$
Therefore
\begin{equation}\label{eq:intermediatelaplace}
\int E\big[ e^{\eta A^{\lambda,\omega}_{0,f}(t)}\big]\, d\mathbb Q(\omega)
\leq \sqrt{\mathbb E_0\big[e^{2\eta A_f(t)}\big]} e^{\frac {\lambda^2}2 \Vert\sigma\Vert_\infty^2 \, t}.\end{equation}
We claim that
for all  $F\in (L^\infty(\Omega))^d$ such that $f=\div(\sigma F)$,  it holds
\begin{equation}\label{eq:laplace1}
\mathbb E_0\big[e^{2\eta A_f(t)}\big]\leq e^{8\eta^2 \Vert F\Vert_\infty^2 t}.\end{equation}

 Let us prove (\ref{eq:laplace1}). First we assume that there exists a smooth function $F\in (L^\infty(\Omega))^d$ such that $f=\div (\sigma F)$.

We use a spectral gap argument:
since we are assuming that $F$ is smooth, then $f$ is a bounded function and we have
$\vert A_f(t)\vert\leq \Vert f\Vert_\infty t$, for all $t$ and $\mathbb Q$ almost surely. In particular the Laplace transform
$\mathbb E_0\big[e^{2\eta A_f(t)}\big]$ is finite.

Let
$${\mathcal Q}_tu(\omega)=E\big[u(X_0^\omega(t).\omega)e^{2\eta A^{0,\omega}_{0,f}(t)} \big].$$
Then $({\mathcal Q}_t\,;\, t\geq 0)$ defines a strongly continuous symmetric semigroup on $L^2(\Omega)$ with Dirichlet form
$-2\eta\int fu^2\,d\mathbb Q +{\mathfrak{E}}(u,u)$.

Let
$$\Lambda(\eta)=\sup\{2\eta\int fu^2\,d\mathbb Q -{\mathfrak{E}}(u,u)\, :\, \int u^2 d\mathbb Q=1\}$$
be the largest eigenvalue of the generator of $(\mathcal Q_t\,;\, t\geq 0)$.

Then
\begin{equation}\label{eq:spectralgap}\mathbb E_0\big[e^{2\eta A_f(t)}\big]\leq e^{\Lambda(\eta)t}.\end{equation}

We now estimate $\Lambda(\eta)$.
Let $u$ be a bounded function in $\cal D$. Then $u^2$ belongs to $\cal D$ and
we have
$$\int f u^2\,d\mathbb Q=-\int 2 u \sigma Du\cdot F\, d\mathbb Q.$$
Therefore
\begin{equation}\label{eq:crucialH}
\vert \int f u^2\,d\mathbb Q\vert
\leq 2 \Vert F\Vert_\infty \sqrt{2{\mathfrak{E}}(u,u)}\sqrt{\int u^2\, d\mathbb Q}.
\end{equation}
Clearly inequality (\ref{eq:crucialH}) extends by continuity to all functions $u$ in $\cal D$.
In particular the expression $\int f u^2\, d\mathbb Q$ appearing in the definition of $\Lambda(\eta)$ defines a quadratic form on $\cal D$.

It follows from (\ref{eq:crucialH}) that, for a function $u$ in $\cal D$ such that $ \int u^2 d\mathbb Q=1$, then
$$2\eta\int fu^2\,d\mathbb Q -{\mathfrak{E}}(u,u)
\leq 4\eta\Vert F\Vert_\infty \sqrt{2{\mathfrak{E}}(u,u)}-{\mathfrak{E}}(u,u)\leq 8\eta^2 \Vert F\Vert_\infty^2.$$
So that
$$\Lambda(\eta)\leq 8\eta^2 \Vert F\Vert_\infty^2$$
and
$$
\mathbb E_0\big[e^{2\eta A_f(t)}\big]\leq e^{8\eta^2 \Vert F\Vert_\infty^2 t}.$$

This ends the proof of (\ref{eq:laplace1}) if the function $f$ is of the form $f=\div (\sigma F)$ for a smooth $F$.

Our goal now is to show that (\ref{eq:laplace1}) holds for $f$ in $H^{-1}_\infty(\Omega)$ of the form $f=\div (\sigma F)$
with an arbitrary bounded function $F\in (L^\infty(\Omega))^d$. We proceed by approximation: let $F^n$ be a sequence of smooth
functions in $(L^\infty(\Omega))^d$ that converges to $F$ in $(L^2(\Omega))^d$ and such that $\sup_n\Vert F^n\Vert_\infty\leq\Vert F\Vert_\infty$. Let $f^n=\div(\sigma F^n)$. Then the sequence $f^n$ converges to $f$ in $H^{-1}(\Omega)$.

We proved, in the discussion after Lemma \ref{l_doob}, that, for any $t>0$, then $A_{f^n}(t)$ converges towards $A_f(t)$
in $L^2(\Omega)$. We may then extract a subsequence that converges almost surely and apply Fatou's Lemma to get
(\ref{eq:laplace1}).

We conclude
from (\ref{eq:intermediatelaplace}) and (\ref{eq:laplace1})
that
$$
\int E\big[ e^{\eta A^{\lambda,\omega}_{0,f}(t)}\big]\, d\mathbb Q(\omega)
\leq e^{4\eta^2\Vert F\Vert_\infty^2t+\frac {\lambda^2}2\Vert\sigma\Vert_\infty^2\,t}.
$$
If now optimize on the choice of $F$, we obtain
\begin{equation}\label{eq:laplaceestimate}
\int E\big[ e^{\eta A^{\lambda,\omega}_{0,f}(t)}\big]\, d\mathbb Q(\omega)
\leq e^{4\eta^2\Vert f\Vert_{\bar H^{-1}_\infty}^2t+\frac {\lambda^2}2\Vert\sigma\Vert_\infty^2\,t}.
\end{equation}

We now deduce (\ref{eq:upH-1bis}) from (\ref{eq:laplaceestimate}).
To make formula more readable, we use the shorthand notation
$\Vert f\Vert=\Vert f\Vert_{\bar H^{-1}_\infty}$.

By Markov's inequality, we have
$$\int P\big[ A^{\lambda,\omega}_{0,f}(t)\geq A\lambda t\big]\, d\mathbb Q(\omega)
\leq e^{-\eta A\lambda t} \int E\big[ e^{\eta A^{\lambda,\omega}_{0,f}(t)}\big]\, d\mathbb Q(\omega).$$
By symmetry, the same holds for $-A^{\lambda,\omega}_{0,f}(t)$ and using (\ref{eq:laplaceestimate}),
we get that
$$\int P\big[\vert A^{\lambda,\omega}_{0,f}(t)\vert\geq A\lambda t\big]\, d\mathbb Q(\omega)
\leq 2 e^{-\eta A\lambda t} e^{4\eta^2\Vert f\Vert^2t+\frac {\lambda^2}2\Vert\sigma\Vert_\infty^2\,t}. $$
We choose $\eta=\frac 1 8 A\lambda \Vert f\Vert^{-1}$ and get that
$$\int P\big[\vert A^{\lambda,\omega}_{0,f}(t)\vert\geq A\lambda t\big]\, d\mathbb Q(\omega)
\leq 2 e^{-\frac 1{16} \frac {A^2\lambda^2}{\Vert f\Vert^2 }t} e^{\frac {\lambda^2}2\Vert\sigma\Vert_\infty^2\,t}. $$

Therefore
$$\int E\big[ \vert A^{\lambda,\omega}_{0,f}(t)\vert^p\big]\, d\mathbb Q(\omega)$$
$$=(\lambda t)^p \int_0^\infty p s^{p-1} \int P\big[\vert A^{\lambda,\omega}_{0,f}(t)\vert\geq s\lambda t\big]\, d\mathbb Q(\omega)\, ds$$
$$\leq (A\lambda t)^p
+ 2 e^{\frac {\lambda^2}2\Vert\sigma\Vert_\infty^2\,t} (\lambda t)^p\int_A^\infty p s^{p-1}e^{-\frac 1{16} \frac {s^2\lambda^2}{\Vert f\Vert^2 }t}$$
$$=(A\lambda t)^p
+ 2 e^{\frac {\lambda^2}2\Vert\sigma\Vert_\infty^2\,t} \big(\frac{4\Vert f\Vert}{\lambda\sqrt{t}}\big)^p
(\lambda t)^p\int_{\frac 14 A\lambda\sqrt{t}\Vert f\Vert^{-1}}^\infty p s^{p-1} e^{-s^2}\, ds$$
$$\leq (A\lambda t)^p
+ 2 e^{\frac {\lambda^2}2\Vert\sigma\Vert_\infty^2\,t} \big(\frac{4\Vert f\Vert}{\lambda\sqrt{t}}\big)^p e^{ -\frac 1{32} A^2\lambda^2t\Vert f\Vert^{-2}}
(\lambda t)^p\int_{\frac 14 A\lambda\sqrt{t}\Vert f\Vert^{-1}}^\infty p s^{p-1} e^{-\frac{s^2}2}\, ds$$
Choose $A^2=16\Vert f\Vert^2\Vert\sigma\Vert_{\infty}^2$ to get the upper bound
$$\leq (A\lambda t)^p
+ 2  \big(\frac{4\Vert f\Vert}{\lambda\sqrt{t}}\big)^p
(\lambda t)^p\int_{\frac 14 A\lambda\sqrt{t}\Vert f\Vert^{-1}}^\infty p s^{p-1} e^{-\frac{s^2}2}\, ds$$
$$ \leq (A\lambda t)^p
+ 2  \big(\frac{4\Vert f\Vert}{\lambda\sqrt{t}}\big)^p
(\lambda t)^p\int_0^\infty p s^{p-1} e^{-\frac{s^2}2}\, ds$$
$$= (4\lambda t \Vert f\Vert\Vert\sigma\Vert_\infty)^p+2\gamma_p(\lambda t)^p  \big(\frac{4\Vert f\Vert}{\lambda\sqrt{t}}\big)^p.$$

\end{proof}

\end{document}